\numberwithin{equation}{section}
\theoremstyle{plain}
\newtheorem{lemma}{Lemma}[section]
\newtheorem{theorem}[lemma]{Theorem}
\newtheorem*{theorem*}{Theorem}
\newtheorem{proposition}[lemma]{Proposition}
\newtheorem{corollary}[lemma]{Corollary}
\newtheorem{theorem-var}{Theorem}[]
\newtheorem{corollary-var}{Corollary}[]
\theoremstyle{definition}
\newtheorem{definition}[lemma]{Definition}
\newtheorem*{definition*}{Definition}
\theoremstyle{remark}
\newtheorem{remark}[lemma]{Remark}
\DeclareMathOperator{\Cent}{Cent}
\DeclareMathOperator{\Span}{span}
\DeclareMathOperator{\height}{ht}
\DeclareMathOperator{\supp}{supp}
\DeclareMathOperator{\Ort}{Ort}
\newcommand{\calA}{\mathcal A} 
\newcommand{\calB}{{\mathcal B}} 
\newcommand{\calC}{{\mathcal C}}
\newcommand{\calS}{\mathcal S} \newcommand{\calR}{\mathcal R}
\newcommand\half{\frac{1}{2}}
 \newcommand{\mN}{\mathbb N}
\newcommand{\mZ}{\mathbb Z}
\newcommand\bb{\mathfrak b}
\newcommand\D{\Phi}
\renewcommand\l{\lambda}
\newcommand\Dp{\Phi^+}
\newcommand\Da{\widehat\Phi}
\newcommand\Pia{{\widehat\Pi}}
\newcommand\Dap{\widehat\Phi^+}
\newcommand\Wa{\widehat{W}}
\renewcommand\d{\delta}
\renewcommand\a{\alpha}
\renewcommand\aa{\mathfrak a}
\renewcommand\b{\bb}
\newcommand\ganz{\mathbb Z}
\newcommand\g{\mathfrak g}
\newcommand{\gog}{\mathfrak g}
\newcommand{\gol}{\mathfrak l}
\newcommand{\gou}{\mathfrak u}
\newcommand{\gob}{\mathfrak b}
\newcommand{\gop}{\mathfrak p}
\newcommand{\h}{\mathfrak t}
\newcommand\nat{\mathbb N}
\newcommand\s{\sigma}
\newcommand\si{\sigma}
\renewcommand\h{\mathfrak h}
\newcommand\ov{\overline}
\newcommand{\Wab}{\mathcal W_\s^{ab}}
\renewcommand{\S}{\Sigma}
\newcommand\C{\mathbb C}
\renewcommand\k{\mathfrak k}
\newcommand\be{\beta}
\newcommand\ha{\widehat{\mathfrak h}}
\newcommand{\gra}{\alpha} \newcommand{\grb}{\beta}    \newcommand{\grg}{\gamma}
\newcommand{\grd}{\delta} \newcommand{\grl}{\lambda}  \newcommand{\grs}{\sigma}
\newcommand{\gre}{\varepsilon} 
\newcommand{\grG}{\Gamma}
 \newcommand{\grS}{\Sigma}
\newcommand{\lra}        {\longrightarrow}
\newcommand{\vuoto}      {\varnothing}
\renewcommand{\ge}      {\geqslant}
\renewcommand{\geq}      {\geqslant}
\renewcommand{\le}      {\leqslant}
\renewcommand{\leq}      {\leqslant}
\newcommand{\ol}         {\overline}
\renewcommand{\setminus}      {\smallsetminus}
\newcommand{\st}       {\, | \,}
\newcommand{\mru}       {\mathrm u}
\DeclareMathOperator{\rk}{rk}
\begin{document}

\title[Spherical nilpotent orbits and abelian subalgebras]{Spherical nilpotent orbits and abelian subalgebras in isotropy representations}
\author[J.~Gandini]{Jacopo Gandini}
\author[P.~M\"oseneder Frajria]{Pierluigi M\"oseneder Frajria}
\author[P.~Papi]{Paolo Papi}
%\date{\today}

\begin{abstract} 

 Let $G$ be a simply connected semisimple algebraic group with Lie algebra $\g$, let $G_0 \subset G$ be the symmetric subgroup defined by an algebraic involution $\s$ and let $\g_1 \subset \g$ be the isotropy representation of $G_0$. Given an abelian subalgebra $\aa$ of $\g$ contained in $\g_1$ and stable under the action of some Borel subgroup $B_0 \subset G_0$, we classify the $B_0$-orbits in $\aa$ and we characterize the sphericity of $G_0 \aa$. Our main tool is the combinatorics of $\s$-minuscule elements in the affine Weyl group of $\g$ and that of strongly orthogonal roots in Hermitian symmetric spaces.
\end{abstract}
\keywords{Spherical variety, abelian ideal , isotropy representations, nilpotent orbit}
\subjclass[2010]{Primary    17B20; Secondary 14M27, 17B08}
\maketitle

\section{Introduction}
%%%%%%%%%%%%%%%%%%%%%%%%%%%%%%%%%%%%%%%%%%%%%%%%%%%%%%%
%%%%%%%%%%%%%%%%%%%%%%%%%%%%%%%%%%%%%%%%%%%%%%%%%%%%%%%
Let $G$ be a connected simply connected semisimple complex algebraic group with Lie algebra $\g$. Let $B$ be a Borel subgroup, and set 
$\gob= {\rm Lie} B$. Recall that a $G$-variety $X$ is called $G$-\textit{spherical} if it possesses an open $B$-orbit. The relationships between spherical nilpotent orbits and abelian ideals of $\gob$ have been first investigated in \cite{PR}.
There it is shown that if $\aa$ is an abelian ideal of $\gob$, then any nilpotent orbit meeting $\aa$ is a $G$-spherical variety
and $G\aa$ is the closure a spherical nilpotent orbit. In particular, $B$ acts on $\aa$ with finitely many orbits.\par
Subsequently, Panyushev \cite{Pa4} dealt with similar questions in the $\mathbb Z_2$-graded case. Let $\sigma$ be an involution of $G$ and 
$\gog = \gog_0 \oplus \gog_1$ be the corresponding eigenspace decomposition at the Lie algebra level.
Let $G_0$ be the connected subgroup of $G$ corresponding to $\gog_0$ and $B_0\subset G_0$ a Borel subgroup of $G_0$.
The ``graded'' analog of the set of abelian ideals of $\gob$ is the set $\mathcal I_{ab}^\s$ of (abelian) $B_0$-stable subalgebras of $\gog_1$.
\begin{definition} We say that $\aa\in\mathcal I_{ab}^\s$ is {\it $G$-spherical} (resp. {\it $G_0$-spherical}) if all orbits $Gx,\,x\in\aa$ are $G$-spherical
(resp.  if all orbits $G_0 x,\,x\in\aa$ are $G_0$-spherical).
\end{definition}
\par
Panyushev \cite{Pa2} started  the classification of the spherical nilpotent $G_0$-orbits in $\g_1$. The classification of the spherical nilpotent $G_0$-orbits in $\gog_1$ was then completed by King \cite{king} (see also \cite{BCG}, where the classification is reviewed and a missing case is pointed out). Shortly afterwards, Panyushev \cite{Pa4} noticed the emergence of non-spherical subalgebras $\aa\in\mathcal I_{ab}^\s$, and classified the involutions $\s$ for which these subalgebras exist. After explicit verifications, he also noticed that an element $\aa\in\mathcal I_{ab}^\s$ is $G$-spherical if and only if it is $G_0$-spherical, but no verification was given, and no conceptual proof was known.

%Panyushev noticed the emergence of non-spherical subalgebras and classified the involutions for which these
%subalgebras exist. He also noticed that $\aa$ is $G$-spherical if and only  if it is $G_0$-spherical, but no conceptual proof was given for the non-trivial implication in this equivalence.\par

The purpose of the present paper is to deepen and expand the results quoted above in the following  directions. Let $\aa\in\mathcal I_{ab}^\s$.
\begin{enumerate}
\item[i)]    
%We show that if $\aa \in \mathcal I_{ab}^\s$ and $x \in \aa$, then $\height_0(x) \leq 3$ and $\height_1(x) \leq 4$ (Corollary \ref{73}). These properties completely characterize the elements of abelian subalgebras of $\g_1$ which are stable under some Borel subgroup of $G_0$ (see Section \ref{51}). As a corollary, using Panyushev criterions for the $G_0$-sphericity of a nilpotent element in $\g_1$, it follows that $\aa \in \mathcal I_{ab}^\s$ is $G_0$-spherical if and only if it is $G$-spherical.

We clarify the connections  between $G_0$-orbits of nilpotent elements in $\gog_1$, spherical  $G$-orbits of nilpotent elements in $\g_1$ and $G_0$-orbits of abelian subalgebras in $\gog_1$ which are stable under some Borel subalgebra of $\gog_0$.
\item[ii)] We prove that $B_0$ acts on $\aa$ with finitely many orbits, independently of its sphericity. Moreover,  we parametrize
orbits via orthogonal set of weights of $\aa$.
\item[iii)]  Assume that there exist non-spherical subalgebras. We give a construction of a canonical non-spherical subalgebra $\aa_p$.
\item[iv)]  We give a simple criterion to decide whether $\aa$ is spherical or not: in  Theorem \ref{MT}
 we show that there exists $\ov \aa\in \mathcal I_{ab}^\s$ such that 
$\aa$ is non-spherical if and only if $\aa\supset \ov \aa$.
 \end{enumerate}
%A more detailed account of  results on sphericity (as Theorem \ref{13}) is given  in Section \ref{s2}: there we also explain how to obtain  an almost 
%inspection-free proof of Panyushev's classification theorem (Theorem \ref{Panyushev}) and a conceptual explanation of the coincidence between $G$-sphericity and 
%$G_0$-sphericity. \par
One  important feature of our approach lies in the methods used. The theory of abelian ideals and its graded version rely on a strict relationship with the geometry of alcoves 
of the affine Weyl group $\Wa$ of $\g$ \cite{K}, \cite{CP}, and, for the graded case, with Kac's classification of finite order automorphisms of semisimple Lie algebras \cite{Kac}, \cite{CMP}, \cite{CMPP}.

The main link is that a $B_0$-stable subalgebra $\aa$ can be encoded by an element   $w_\aa\in\Wa$ defined through its  set  of inversions $N(w_\aa)$ (cf. (\ref{nw})). 
The elements so obtained, called 
$\s$-minuscule  (Definition \ref{sm}), pave a convex polytope in the dual space of a Cartan subalgebra of $\g$ and have remarkable properties: see Section \ref{bs} for a recollection of these facts.
It has been explicitly asked (e.g., in \cite{Pa4}) to use the above connections as a tool for dealing with problems about sphericity. This is what we do here.
%\end{document}
%, by clarifying the role of the affine root system of $\gog$ in the sphericity of $\mathcal I^\s_{ab}$.

We start discussing items i)-iv) by making  the content of i) more precise. Define the \textit{height} of a nilpotent element $x \in \g$ as
$$
	\height(x) = \max\{n \in \mN \st \mathrm{ad}(x)^n \neq 0\}.
$$
In the adjoint case, Panyushev \cite{Pa2} completely characterized the spherical nilpotent $G$-orbits in $\g$ by showing that, for $x \in \g$, the orbit $Gx$ is spherical if and only if $\height(x) \leq 3$. Subsequently, Panyushev and R\"ohrle \cite{PR} proved that, if $\aa \subset \b$ is an abelian ideal, then the saturation $G\aa$ is spherical. On the other hand, if $Gx$ is spherical, by chosing $\b$ properly it is always possible to construct an abelian ideal $\aa \subset \b$ such that $G\aa = \ol{Gx}$. Therefore we may regard both these properties as consequences of the small height of the nilpotent element $x$.

For $i=0,1$ define the $i$-height of a nilpotent element $x \in \g_1$ as
$$
	\height_i(x) = \max\{n \in \mN \st \mathrm{ad}(x)^n_{|\g_i} \neq 0\}.
$$
In \cite{Pa2} Panyushev showed  that, for $x \in \g_1$, the following implications hold
$$
	\height(x) \leq 3 \Longrightarrow G_0 x \mbox{ spherical} \Longrightarrow \height_0(x) \leq 4,  \height_1(x) \leq 3.
$$
%The classification of the spherical nilpotent $G_0$-orbits in $\gog_1$ was then completed by King \cite{king} (see also \cite{BCG}, where the classification is reviewed and some missing cases are pointed out). Shortly afterwards, Panyushev \cite{Pa4} noticed the emergence of non-spherical subalgebras $\aa\in\mathcal I_{ab}^\s$, and classified the involutions $\s$ for which these subalgebras exist. After explicit verifications, he also noticed that an element $\aa\in\mathcal I_{ab}^\s$ is $G$-spherical if and only if it is $G_0$-spherical, but no verification was given, and no conceptual proof was known.
In Corollary \ref{73}, we show the following result.
\begin{theorem*}
 If $\aa \in \mathcal I_{ab}^\s$ and $x \in \aa$, then $\height_0(x) \leq 3$ and $\height_1(x) \leq 4$.
 \end{theorem*}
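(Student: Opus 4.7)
The plan is to reduce both height inequalities to a combinatorial question about sums of roots, and then to settle that question using the normal form for elements of $\aa$ supplied by item (ii) of the introduction.

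Since $\aa$ is abelian, the Jacobi identity yields $[\mathrm{ad}(a_1),\mathrm{ad}(a_2)] = \mathrm{ad}([a_1,a_2]) = 0$ for all $a_1, a_2 \in \aa$, so the root-space components of $\mathrm{ad}(x)$ pairwise commute. Writing $x = \sum_{\alpha \in S} x_\alpha$ with $S \subset \Phi_1$ the set of roots of $\aa$ in the support of $x$, we get a multinomial expansion
\[
    \mathrm{ad}(x)^n = \sum_{|\lambda|=n} \binom{n}{\lambda} \prod_{\alpha \in S} \mathrm{ad}(x_\alpha)^{\lambda_\alpha},
\]
whose summands shift weights by $\sum_\alpha \lambda_\alpha \alpha$. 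Consequently $\height_0(x) \leq 3$ and $\height_1(x) \leq 4$ follow once we check the purely combinatorial statement: for every $\gamma \in \Phi_0$ (resp.\ $\gamma \in \Phi_1$) and every tuple $(\lambda_\alpha)_\alpha$ of nonnegative integers with $|\lambda| \geq 4$ (resp.\ $|\lambda| \geq 5$), the weight $\gamma + \sum_\alpha \lambda_\alpha \alpha$ fails to be a root of $\g$.

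To handle this, we invoke item (ii): every $B_0$-orbit in $\aa$ contains a representative of the form $\sum_{i=1}^k e_{\beta_i}$, where $\{\beta_1,\ldots,\beta_k\}$ is a set of pairwise strongly orthogonal weights of $\aa$. Since $B_0$-conjugation preserves both $\height_0$ and $\height_1$, we may assume $x$ is of this shape, so that the support $S$ consists of strongly orthogonal roots in $\Phi_1$. Strong orthogonality gives $(\beta_i,\beta_j) = 0$ for $i \ne j$, and hence, for any $\gamma$ in the root lattice,
\[
    \bigl\| \gamma + \textstyle\sum_i \lambda_i \beta_i \bigr\|^2 = \|\gamma\|^2 + \textstyle\sum_i \lambda_i^2 \|\beta_i\|^2 + 2\sum_i \lambda_i (\gamma,\beta_i).
\]

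The main obstacle is to push this length estimate through to the non-root conclusion. When $|\lambda|\ge 4$ (resp.\ $5$) the sum $\sum_i \lambda_i^2\|\beta_i\|^2$ is bounded below by $4\|\beta\|^2$ (resp.\ $5\|\beta\|^2$), and the only way for the right-hand side to equal a squared root-length is for the pairings $(\gamma,\beta_i)$ to compensate in a highly constrained way. Ruling this out should be done by combining the structure of strongly orthogonal sets coming from the Hermitian sub-pair they generate (\`a la Kostant's cascade) with the $\sigma$-minuscule combinatorics of Section \ref{bs}, which controls exactly which roots of $\Phi_0$ can be paired nontrivially with members of $S$. A type-by-type check, or a uniform argument using the length and support of the associated $\sigma$-minuscule element $w_\aa \in \Wa$, is the expected way to finish, and is where the real work lies.
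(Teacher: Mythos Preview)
Your reduction to representatives $x=\sum_i x_{\beta_i}$ with $\{\beta_i\}$ strongly orthogonal is the right first move, and it is exactly how the paper begins (this is Corollary~\ref{73}, which invokes Theorem~\ref{teo:B0-orbite}). But from that point on your plan diverges from the paper's argument and, as written, has a genuine gap.

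The gap is that you never bound the cross--terms $2\sum_i\lambda_i(\gamma,\beta_i)$. Your length identity is correct, and the lower bound $\sum_i\lambda_i^2\|\beta_i\|^2\geq 4\min_i\|\beta_i\|^2$ is easy, but the pairings $(\gamma,\beta_i)$ can be negative, and there is no a~priori control on $\sum_i\lambda_i(\gamma,\beta_i)$ short of knowing something like $|\gamma(h_{\calS})|\leq 4$~--- which is equivalent to $\height(x)\leq 4$, precisely one of the facts you are trying to prove. So the length estimate by itself is circular, and the sentence ``a type-by-type check, or a uniform argument \ldots\ is the expected way to finish'' is where the entire content of the theorem lives. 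As it stands this is a plan, not a proof.

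The paper's route is quite different and avoids lengths entirely. After reducing to $x=x_\calS$, one works with the normal $\mathfrak{sl}(2)$-triple $\{x_\calS,h_\calS,y_\calS\}$ and the associated grading $\g=\bigoplus_n\g(n)$. If $\alpha$ is a weight realizing the height, i.e.\ $\alpha(h_\calS)=\height(x_\calS)$, then the set $\Pi_{\calS,\alpha}=\{\hat\gamma:\gamma\in\calS,\ (\alpha,\gamma)>0\}\cup\{-\hat\alpha\}$ (with suitable lifts to $\Da$) yields a generalized Cartan matrix which, by positive--semidefiniteness of the form on $\Da$, must be of finite or affine type (Lemma~\ref{affine type}). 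In any such diagram the total number of edges at a node is at most $4$, and this immediately gives $\height(x_\calS)\leq 4$ (Lemma~\ref{lemma:height 4}). The sharper statement $\height_0(x_\calS)\leq 3$ requires a second, separate argument (Lemma~\ref{gzero4=0}): assuming the height-realizing $\alpha$ lies in $\Phi_0$, one uses the affine relation $\sum_\gamma k_\Xi a_{\Xi,\gamma}\hat\gamma = 2k\delta + 2\hat\alpha$ to manufacture, via a pigeonhole on the multiset $\{\hat\gamma_1,\dots,\hat\gamma_4\}$, two weights of $\aa$ whose sum is a root in $\widehat\Phi_2$, contradicting abelianness. This step uses $[\aa,\aa]=0$ in an essential way that goes beyond the commutation $[\mathrm{ad}(x_{\beta_i}),\mathrm{ad}(x_{\beta_j})]=0$ you invoke; your plan has no analogue of it, and without it the bound $\height_0\leq 3$ does not follow from $\height\leq 4$.
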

  These properties completely characterize the elements of abelian subalgebras of $\g_1$ which are stable under some Borel subgroup of $G_0$ (see Section \ref{51}). As a corollary, using Panyushev's criterion for the $G_0$-sphericity of a nilpotent element in $\g_1$, it follows that $\aa \in \mathcal I_{ab}^\s$ is $G_0$-spherical if and only if it is $G$-spherical.
\par
Regarding ii), a well known result independently due to Brion \cite{brion} and Vinberg \cite{Vi1}, states that every spherical $G$-variety
%, which can be defined in the same way for any reductive group, 
contains finitely many $B$-orbits. 
In particular, every abelian ideal $\aa$ of $\b$ contains finitely many $B$-orbits. In  \cite{Pa5}, the same result has been proved avoiding the use of the sphericity of $G\aa$. 
In Section \ref{s4},  we prove, along the same lines, the finiteness theorem quoted in ii), in the more general context of finite order automorphisms of $G$ (see Theorem \ref{teo:B0-orbite}).\par

%In  \cite{Pa4} the involution $\s$ is said to be of \textit{type I} if every $\aa \in \mathcal I_{ab}^\s$ is $G_0$-spherical, and of \textit{type II} otherwise.
To streamline our approach to  iii),  recall that involutions of $\g$ are encoded by the datum of one or two simple roots (with suitable features) in the extended Dynkin diagram $\Pia$ of $\g$. The main result of  \cite{Pa4} has been  rephrased by Panyushev  as follows:  there exists a non spherical $\aa\in\mathcal I^\s_{ab}$ if and only if if $\s$ is defined by a single simple root $\a_p$, which is long and non-complex (see Definition \ref{nc}). (As usual, if $\widehat \Pi$ is simply laced, every root is regarded as long).
%\footnote{Mi pare che nella formulazione di Panyushev le radici medie anche danno luogo a involuzioni di tipo II, \`e in accordo con la nostra formulazione? Forse sarebbe meglio dare la formulazione di Panyushev nell'introduzione che suona pi\`u immediata.}. 
However this claim was obtained  as a by-product of direct considerations on various classes of involutions and by constructing case-by-case a non-spherical element $\aa \in \mathcal I_{ab}^\s$ for all involution satisfying the previous  condition.

In this paper we observe that, precisely  when $\a_p$ is long and non-complex,  there exists a {\sl special} Êelement $\aa_{p}\in \mathcal I_{ab}^\s$, which plays a role in the classification of maximal elements in $\mathcal I_{ab}^\s$ performed in \cite{CMPP}.
In Section \ref{5} we study the properties of $\aa_{p}$. In particular, using the combinatorics of $N(w_{\aa_{p}})$, we prove that $\aa_p$  is 
 not $G_0$-spherical. The method is combinatorial:  we associate to any orthogonal set of maximal cardinality in $N(w_{\aa_{p}})\setminus\{\a_p\}$ a generalized Cartan matrix of affine type, whose type is severely restricted (see Proposition \ref{63}). The information we obtain from this Cartan matrix allows us to build up a generic element $x \in \aa_{p}$ with $\height_1(x) = 4$, proving  that $\aa_{p}$ is not $G_0$-spherical.
 
The same strategy is applied in a wider context in Section \ref{cla}, and it enables us to classify the spherical elements of $\mathcal I_{ab}^\s$, as outlined in iv). 
%Recall that the complexity of an irreducible $G_0$-variety is defined as
%$$
%c_{G_0}(X) = \min_{x\in X}\text{codim} B_0 x.
%$$
%In particular, the spherical $G_0$-varieties coincide with the $G_0$ varieties of complexity zero. Using a formula for the complexity given in \cite{Pa2}, in Corollary \ref{complexity} we show  that $c_{G_0}(G_0\aa_p)=1$.
The construction of the minimal non-spherical subalgebra $\ov\aa$  is  based on the combinatorics of strongly orthogonal roots in Hermitian symmetric spaces. Many related  technical results might be of independent interest, and they are displayed in Section \ref{4}. To construct  $\ov\aa$,
decompose $\Pia\setminus\{\a_p\}$ into a disjoint union of connected components $\S$. Then in each $\S$ there exists a unique simple root $\a_\S$ non-orthogonal to $\a_p$,
and it turns out that $\a_\S$ determines an Hermitian involution of tube type of the Lie algebra $\g_\S$ having $\S$ as set of simple roots (see Subsection \ref{tt} and Proposition \ref{tubet}). If $\Phi(\S)^+_1$ denotes the set of positive roots of $\g_\S$ having $\a_\S$ in their support, in Lemma \ref{lemmahermitiano} we prove that there exists a unique subset  $\mathcal A_\S$ which is an antichain in $\Phi(\S)^+_1$ w.r.t.  the dominance order $\leq_\grS$ defined by $\grS$ and which is a maximal orthogonal subset of $\Phi(\S)^+_1$.
Next, we prove that $\bigcup_\S \bigcup\limits_{\eta\in\mathcal A_\S}\{\xi+\a_p\mid \xi\leq_\grS \eta\}$  is the set of inversions of a $\s$-minuscule element, hence it determines  an element $\ov\aa\in \mathcal I_{ab}^\s$, which turns out to have the property described in iv).

%In the present paper we clarify the connections, appearing in Corollary \ref{co}, between $G_0$-orbits of nilpotent elements in $\gog_1$, spherical  $G$-orbits of nilpotent elements in $\g_1$ and $G_0$ orbits of abelian subalgebras in $\gog_1$ which are stable under some Borel subalgebra of $\gog_0$. 
%Regarding (iv), it is not hard to see that a spherical orbit $Ge, e\in \g_1$, intersects densely the $G_0$-orbit of an abelian subalgebra stable under a suitable Borel subalgebra $\gob_0$ (see  Corollary \ref{co}). Conversely, we  show that the heights of elements of such subalgebras are severely constrained, even though the corresponding orbits need not to be spherical. In Section \ref{cla} we prove that if 
%$\aa$ is a abelian $B_0$-stable subalgebra in $\gog_1$ and  $e \in \aa$ is  such that $\ol{G_0e} = G_0 \aa$,  then 
%\begin{equation}\label{constraints}
%\height(e) \leq 4\text{ and }\gog_0(4) = \{0\}
%\end{equation}
%(notation as in \eqref{g0i}). In particular, $G_0\aa$ is spherical if and only if $G\aa$ is spherical.
%Constraints  \eqref{constraints} completely characterize generic elements in abelian subalgebras of $\g_1$ stable under some Borel subalgebra: see Corollary \ref{73}. \\

\noindent{\it Acknowledgements.} We thank Dmitri Panyushev for useful discussions.

%%%%%%%%%%%%%%%%%%%%%%%%%%%%%%%%%%%%%%%%%%%%%%%%%%%%%%%
%%%%%%%%%%%%%%%%%%%%%%%%%%%%%%%%%%%%%%%%%%%%%%%%%%%%%%%
\section{Setup}
%%%%%%%%%%%%%%%%%%%%%%%%%%%%%%%%%%%%%%%%%%%%%%%%%%%%%%%
%%%%%%%%%%%%%%%%%%%%%%%%%%%%%%%%%%%%%%%%%%%%%%%%%%%%%%%

Let $G$ be a semisimple, connected and simply connected complex algebraic group with Lie algebra $\g$, and let $B \subset G$ be a Borel subgroup with Lie algebra $\b$. Throughout the  paper,  $\s: G \lra G$ will be an indecomposable automorphism of finite order $m$. Then $\s$ induces an automorphism of $\g$ as well,  still denoted by $\s$. Fix a primitive $m^{\mathrm{th}}$-root of unity $\zeta$ and consider the corresponding $\mZ_m$-grading
$$
	\gog = \bigoplus_{ i \in \mZ_m} \gog_i,
$$
where $\gog_i$ denotes the eigenspace of $\grs$ of weight $\zeta^i$. Then $\gog_0$ is a reductive subalgebra of $\gog$ (see \cite[Lemma 8.1]{Kac}), and the connected reductive subgroup $G_0 \subset G$ defined by $\g_0$ coincides with the set of fixed points of $\s$. Fix a Cartan subalgebra $\h_0 \subset \g_0$, which is abelian since $\g_0$ is reductive. If $\aa\subset \g$ is a $\h_0$-stable subspace, we let $\Psi(\aa)$ denote its set of $\h_0$-weights and, for $\l\in\Psi(\aa)$, we let $\aa^\l$ be the corresponding weight space. \par

Every eigenspace $\gog_i$ is a $G_0$-module under the restriction of the adjoint action. If $i \in \mZ _m$, we denote by $\Phi_i$ the set of the non-zero $\h_0$-weights in $\gog_i$.
%, and if $\gra \in \Phi_i \cup \{0\}$ we denote by $\gog_i^\gra$ the corresponding weight space. 
Denote finally $\Phi=\cup_i\Phi_i$ the set of non-zero weights. We say $\mu,\nu\in\Phi$ are {\it strongly orthogonal} if $(\mu,\nu)=0$ and $\mu\pm\nu\notin \Phi$. (This definition agrees with the usual notion of strongly orthogonal roots in a semisimple Lie algebra). %\footnote{Non basta $\mu\pm\nu\notin \Phi$?} \par

Observe that $\Phi_0$ is the set of $\h_0$-roots for $\g_0$.
As shown in \cite[Chapter 8]{Kac},  $\h_0$ contains a regular element $h_{reg}$ of $\g$. In particular the centralizer $\Cent(\h_0)$ of $\h_0$ in $\g$ is a Cartan subalgebra of $\g$ and $h_{reg}$ defines a set of positive roots in the set of roots of $(\g, \Cent(\h_0))$ and a set $\Dp_0$ of positive roots in $\D_0$. We let $\Pi_0$ be the corresponding set of simple roots, $\b_0$ the corresponding Borel subalgebra, and  $B_0\subset G_0$ the corresponding Borel subgroup.

  \vskip5pt
\subsection{The grading associated to a nilpotent element $x \in \g_1$}
We fix in this subsection  notation concerning the grading of $\g$ associated to nilpotent elements in $\g_1$ and the correspoding notion of height. The main references for this subsection are \cite{Vi} and \cite{Pa2}. By \cite{Vi}, an element $x \in \gog_1$ is semisimple if and only if $G_0 x$ is closed, whereas it is nilpotent if and only if $0 \in \ol{G_0 x}$. 

Let $x \in \gog_1$ be a nilpotent element. By a modification of the Jacobson-Morozov theorem, there exists a $\mathfrak{sl}(2)$-triple $(x,h,y)$ with $y \in \gog_1$ and $h \in \gog_0$. Such triples are usually called {\it normal triples}, or {\it adapted triples}. Let
$$\gog = \bigoplus_{i \in \mZ} \gog(i)$$
be the $\mathbb Z$-grading defined by $h$; then we get a bigrading of $\g$ by setting
$$ \gog_j(i) = \gog_j \cap \gog(i).$$
Since all normal triples containing $x$ are conjugated by the stabilizer of $x$ in $G_0$ (see \cite[Theorem 1]{Vi}), it follows that the structure of this bigrading does not depend on the choice of the normal triple.

Following Panyushev \cite{Pa2}, define the {\it height} of $x$ as
$$	\height(x) = \max\{n \in \mN \st \gog(n) \neq 0\}. $$
Since $[x,\gog(i)] = \gog(i+2)$, this notion agrees with the height defined in the Introduction, namely the maximum $n$ such that $\mathrm{ad}(x)^n \neq 0$.

%Let $\g$ be simple finite dimensional complex Lie algebra and $\si$ an  automorphism  of $\g$ of order $m$.    Let
%$(\cdot,\cdot)$ be the Killing form of $\g$.  For $=0,1,\ldots,m-1$, set  $\g_j=\{X\in \g\mid \s(X)=\zeta^jX\}$, so that we have  $\g=\g_0\oplus \g_1\oplus\ldots\oplus \g_{m-1}$. 
%Let  $\h_0$ be a Cartan subalgebra of $\g_0$. 

  \vskip5pt
\subsection{Twisted loop algebra and finite order automorphisms}\label{ta}

 Since $\s$ fixes $h_{reg}$,  we see that the action of $\s$ on the positive roots defines, once  Chevalley generators are fixed, a dia\-gram automorphism $\eta$ of $\g$ that, clearly, fixes $\h_0$. Set, using the notation of \cite{Kac}, $\ha=\h_0\oplus\C K\oplus\C d$. Recall that $d$ is the element of 
 $$\widehat L(\g,\s)=(\C[t,t^{-1}]\otimes \g)\oplus\C K\oplus\C d$$ acting on $\C[t,t^{-1}]\otimes \g$ as $t\frac{d}{dt}$, while $K$ is a central element.
Define 
$\d'\in\ha^*$  by setting $\d'(d)=1$ and $\d'(\h_0)=\d'(K)=0$ and let $\l\mapsto \ov\l$ be the restriction map $\ha\to\h_0$. 
There is a unique extension, still denoted by $(\cdot,\cdot)$,  of the 
 Killing form of $\g$ to a nondegenerate symmetric bilinear invariant form on 
 $\widehat L(\g,\si)$.  Let $\nu:\ha\to\ha^*$ be 
the isomorphism induced by the form $(\cdot,\cdot)$, and denote again by $(\cdot,\cdot)$
the form induced on $\ha^*$. One has  $(\d',\d')=(\d',\h_0^*)=0$.

We let $\Da$ be the set of $\ha$-roots of $\widehat L(\g,\s)$. We can choose as set of  positive roots $\Dap=\Dp_{0}\cup\{\a\in\Da\mid \a(d)>0\}$. We let $\Pia=\{\a_0,\dots,\a_n\}$ be the corresponding set of simple roots. It is known that $n$ is the rank of $\g_0$.  
Recall that any $\widehat L(\g,\s)$ is a Kac-Moody Lie algebra $\g(A)$ defined by generators and relations starting from a generalized Cartan matrix $A$ of affine type. These matrices are
classified by means of Dynkin diagrams listed in \cite{Kac}.

Let   $\Wa$ be the Weyl group of $\widehat L(\g,\s)$ and let $\Da_{re}=\widehat W\Pia$ be the set of real roots of $\widehat L(\g,\s)$. Recall that if $\beta=w(\a),\,\a\in\Pia$, one defines
$\beta^\vee=w(\a^\vee)$.

If $\gamma\in\h_0^*$, we set 
$$h_\gamma=\nu^{-1}(\gamma),\qquad\gamma^\vee=\frac{2 h_\gamma}{(\gamma,\gamma)}\quad (\gamma\ne 0).$$
By \cite[5.1]{Kac} if $\l\in \C\d'+\h_0^*$ and $\be\in\Da_{re}$, then 
$$
\l(\beta^\vee)=2\frac{(\l,\be)}{(\be,\be)}=2\frac{(\l,\ov\be)}{(\ov\be,\ov\be)}=\l(\ov\beta^\vee)=2\frac{(\ov\l,\ov\be)}{(\ov\be,\ov\be)}=\ov\l(\ov\beta^\vee).
$$
If $\l,\mu\in \C\d'+\h_0^*$ and $(\mu,\mu)\ne0$, we set
$$
\langle \l,\mu^\vee\rangle=2\frac{(\l,\mu)}{(\mu,\mu)}.
$$
In particular, if $\a\in\Da$ and $\be\in\Da_{re}$,
$$
\langle \a,\be^\vee\rangle=\a(\be^\vee)=\langle\ov\a,\ov\be^\vee\rangle=\ov\a(\ov\be^\vee ).
$$
 We will use these equalities many times without comment.

Following \cite[Chapter 8]{Kac}, we can assume that $\s$ is the automorphism of type $(\eta;s_0,\dots, s_n)$, where $\eta$ is the diagram automorphism defined above. 
%Note that, since $\s$ is an involution, $\eta^2=Id$. We do not assume here that $\g$ is simple, but, as explained in \cite{MMJ}, most arguments given in \cite{Kac} can be safely extended to the  setting where $\g$ is semisimple but not simple. This latter case, i.e. $\g=\k\oplus\k,\,\k$ a simple Lie algebra, $\s$ the flip, will be referred to as the {\it adjoint case}.  
Recall that, if $a_0,\dots,a_n$ are the labels of the Dynkin diagram of $\widehat L(\g,\s)$ and $k$ is the order of $\eta$, then 
$k(\sum_{i=0}^ns_ia_i)=m$. Recall also that $s_0,\dots,s_n$ are relatively prime so, in the case of involutions ($m=2$), we must have that $s_i\in\{0,1\}$ and $s_i= 0$ for all but at most two indices.\par
Since $\s$ is the automorphism of type $(\eta;s_0,\dots, s_n)$, we can write $\a_i=s_i\d'+\ov{\a_i}$ and it turns out that $\Pi_0=\{\a_i\mid s_i=0\}$.
Set also $\Pi_1=\Pia\setminus \Pi_0$.
Introduce $\d=\sum_{i=0}^na_i\a_i$ and note that $\d=(\sum_{i=0}^na_i s_i)\d'=\frac{m}{k}\d'$.
%Set also $\a_i^\vee=\frac{2}{(\a_i,\a_i)}\nu^{-1}(\a_i)$ and let $\{a^\vee_0,\dots,a^\vee_n\}$ be the labels of the dual Dynkin diagram of $\widehat L(\g,\s)$. 
%We assume that $K$ is the canonical central element \cite [6.2]{Kac},  $K=\sum_{i=0}^n a_i^\vee \a_i^\vee$. If we number the Dynkin diagrams as in \cite[Tables Aff1, Aff2, Aff 3]{Kac} then, by Sections  6.1, 6.2, 6.4 of \cite{Kac}, 
%\begin{equation}\label{C}
%K=\frac{2a_0}{\Vert \d-a_0\a_0\Vert}\nu^{-1}{(\d)}.
%\end{equation}
%Set finally  ${\bf g}=\sum_{i=0}^n a_i^\vee$. This number is called the dual Coxeter number of $\widehat L(\g,\s)$.
%We let $\Wa$ be the Weyl group of $\widehat L(\g,\s)$.
% Set $(\h_0)_\R=\oplus_{\a\in\Pi}\R\a^\vee$ and $\ha_\R=\R d\oplus \R K\oplus(\h_0)_\R$. Denote by
%$\label{caf}
%C_1=\{h\in(\h_0)_\R\mid \ov\a_i(h)\geq -s_i,\,i=0,\ldots,n\}
%$
 %the fundamental alcove of $\Wa$. \par
 
Given $\grl \in \widehat \Phi$ we denote by $\widehat L(\g,\s)_\grl$ the corresponding root space in $\widehat L(\g,\s)$. Recall the following properties (see {\cite[Exercise 8.2]{Kac}}).

\begin{proposition} \label{prop:kac}
Let $\grl \in \Da_{re}$, then the following holds:
\begin{itemize}
	\item[i)] $\dim \widehat L(\g,\s)_\grl = 1$.
	\item[ii)] If $\mu \in\Da$, then the set of $\mu + i \grl \in\Da \cup \{0\}$ is a string $\mu - p \grl \ldots, \mu + q \grl$, where $p, q$ are non-negative integers such that $p-q = \langle\mu,\l^\vee\rangle$.
	\item[iii)] If $\mu \in\Da$ and $\mu + \grl \in\Da$, then $[\widehat L(\g,\s)_\grl, \widehat L(\g,\s)_\mu] \neq 0$.
\end{itemize}
\end{proposition}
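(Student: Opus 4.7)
The plan is to deduce all three items from the general theory of real roots in a Kac--Moody algebra, applied here to $\widehat L(\g,\s)$. The guiding principle is that, by definition, $\Da_{re}=\Wa\Pia$, so every real root is $\Wa$-conjugate to a simple root, and the three assertions are $\Wa$-equivariant; this reduces each statement to the case $\l\in\Pia$, where the Chevalley generators provide the needed structure.

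First I would deal with (i). Any $w\in\Wa$ lifts to an automorphism of $\widehat L(\g,\s)$ mapping the root space $\widehat L(\g,\s)_\nu$ onto $\widehat L(\g,\s)_{w(\nu)}$. Writing $\l=w(\a_i)$ and using that $\widehat L(\g,\s)_{\a_i}=\C e_i$ is one-dimensional by construction of $\g(A)$ from generators and relations, one obtains $\dim\widehat L(\g,\s)_\l=1$.

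For (ii) and (iii), I would fix a nonzero $e_\l\in\widehat L(\g,\s)_\l$ and complete it to an $\gos\gol_2$-triple $(e_\l,\l^\vee,f_\l)$, and then consider
\[
M_\mu=\bigoplus_{i\in\mZ}\widehat L(\g,\s)_{\mu+i\l},
\]
on which $\l^\vee$ acts on the $i$-th summand by $\langle\mu,\l^\vee\rangle+2i$. Granted that $M_\mu$ is a locally finite $\gos\gol_2$-module, standard $\gos\gol_2$-representation theory forces the set $\{i\in\mZ : \mu+i\l\in\Da\cup\{0\}\}$ to be an unbroken interval $[-p,q]$ symmetric about $-\tfrac12\langle\mu,\l^\vee\rangle$, yielding $p-q=\langle\mu,\l^\vee\rangle$; this is (ii). For (iii), if $\mu$ and $\mu+\l$ both lie in the string, then $e_\l$ cannot annihilate $\widehat L(\g,\s)_\mu$, because a finite-dimensional irreducible $\gos\gol_2$-module has no weight gaps; hence $[\widehat L(\g,\s)_\l,\widehat L(\g,\s)_\mu]\neq 0$.

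The only genuine obstacle is the local finiteness of $M_\mu$, equivalent to the local ad-nilpotence of $e_\l$ and $f_\l$. This is the integrability property of root vectors attached to real roots in a Kac--Moody algebra: for simple roots it follows from the Serre relations defining $\g(A)$, and it is preserved under the $\Wa$-action since $w$ carries $\mathrm{ad}(e_i)$ to a nonzero scalar multiple of $\mathrm{ad}(e_\l)$ when $\l=w(\a_i)$. Once integrability is in place, the $\gos\gol_2$-string argument closes both (ii) and (iii) simultaneously, and the whole proposition is a direct specialization of the general Kac--Moody result to the affine algebra $\widehat L(\g,\s)$.
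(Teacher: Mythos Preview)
Your argument is correct and is exactly the standard Kac--Moody proof (reduce to a simple root via $\Wa$, then apply $\gos\gol_2$-theory using integrability). Note, however, that the paper does not supply a proof at all: it simply records the proposition with a reference to \cite[Exercise 8.2]{Kac}, so there is no ``paper's approach'' to compare with beyond the citation.

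One small wording issue: in (iii) you justify non-vanishing by saying an irreducible $\gos\gol_2$-module ``has no weight gaps,'' but $M_\mu$ need not be irreducible. The clean version of your own argument is: if $\mathrm{ad}(e_\l)$ killed all of $\widehat L(\g,\s)_\mu$, every vector there would be a highest weight vector, forcing $\langle\mu,\l^\vee\rangle\ge 0$; but any irreducible summand of $M_\mu$ contributing to the nonzero space $\widehat L(\g,\s)_{\mu+\l}$ has highest weight $\ge\langle\mu,\l^\vee\rangle+2$, hence also meets the $\langle\mu,\l^\vee\rangle$-weight space with a vector that is \emph{not} annihilated by $e_\l$, a contradiction. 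This is implicit in what you wrote, but worth stating precisely.
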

Notice that, if $\grl = i \grd' + \gra\in\Da_{re}$, then 
$$\widehat L(\g,\s)_\grl =t^i\otimes\g^\a_i.$$
This implies that  we can rephrase the previous proposition in terms of $\h_0$-weights in $\gog$ as follows.

\begin{corollary}	\label{cor:commutatori}
Let $\gra \in \Phi_i$ and $\grb \in \Phi_j$. If $i \equiv j \ mod\ m$, assume also that $\gra \neq \grb$.
\begin{itemize}
	\item[i)] $\dim \gog_i^\gra = 1$, and if $-\gra \in \Phi_i$ then $[\gog_i^\gra , \gog_i^{-\gra}] \neq 0$.
	\item[ii)] If $(\gra, \grb) < 0$ then $\gra + \grb \in \Phi_{i+j}$, and if $(\gra, \grb) > 0$ then $\gra - \grb \in \Phi_{i-j}$.
	\item[iii)] If $\gra + \grb \in \Phi_{i+j}$, then $[\gog_i^\gra , \gog_j^\grb] \neq 0$.
\end{itemize}
\end{corollary}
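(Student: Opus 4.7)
The plan is to deduce the corollary directly from Proposition \ref{prop:kac} applied to the real affine roots $\lambda = i\delta' + \alpha$ and $\mu = j\delta' + \beta$, using the identification $\widehat L(\g,\s)_{i\delta' + \alpha} = t^i \otimes \g_i^\alpha$ noted just before the corollary. The preliminary step will be to check that $\lambda$ is a real root: since $\alpha \in \Phi_i$ gives $\g_i^\alpha \neq 0$, the weight $i\delta' + \alpha$ is an $\ha$-root of $\widehat L(\g,\s)$, and since its $\h_0$-component $\alpha$ is non-zero it cannot be imaginary (imaginary roots being multiples of $\delta$, which projects to $0$ on $\h_0$). The same reasoning gives $\mu \in \widehat\Phi_{re}$.

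Once this is established, the dimension assertion in (i) is immediate from Proposition \ref{prop:kac}(i). For the bracket in (i), I would take $\mu = i\delta' - \alpha$, which is real by the same argument, and compute
$$\langle\mu, \lambda^\vee\rangle = 2(-\alpha, \alpha)/(\alpha, \alpha) = -2.$$
Proposition \ref{prop:kac}(ii) then forces $q \geq 2$ in the $\lambda$-string through $\mu$, so $\mu + \lambda = 2i\delta' \in \widehat\Phi \cup \{0\}$. If $i = 0$, the claim reduces to standard $\mathfrak{sl}_2$-theory for the root $\alpha$ of the reductive algebra $\g_0$. Otherwise $2i\delta'$ is a non-zero (imaginary) root, and Proposition \ref{prop:kac}(iii) yields $[\widehat L(\g,\s)_\lambda, \widehat L(\g,\s)_\mu] \neq 0$; since the loop bracket $[t^i \otimes x, t^i \otimes y] = t^{2i} \otimes [x,y]$ has no central contribution when $2i \neq 0$ in $\mZ$, this forces $[\g_i^\alpha, \g_i^{-\alpha}] \neq 0$.

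For (ii) I would take $\lambda = i\delta' + \alpha$ and $\mu = j\delta' + \beta$, both real, and compute $\langle\mu, \lambda^\vee\rangle = 2(\beta,\alpha)/(\alpha,\alpha)$. The hypothesis that $\alpha \neq \beta$ when $i \equiv j \bmod m$ ensures $\mu$ and $\lambda$ are not in the same $\delta$-orbit, so the string argument of Proposition \ref{prop:kac}(ii) is meaningful; if $(\alpha,\beta) < 0$ then $q \geq 1$, giving $\lambda + \mu = (i+j)\delta' + (\alpha+\beta) \in \widehat\Phi\cup\{0\}$, and since $\alpha + \beta$ would have to be non-zero (otherwise the case is absorbed into (i)), $\alpha+\beta \in \Phi_{i+j}$. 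The opposite sign case is handled symmetrically by looking at $\mu - \lambda$. Part (iii) is then the dual statement: if $\alpha + \beta \in \Phi_{i+j}$ then $\lambda + \mu \in \widehat\Phi_{re}$, and Proposition \ref{prop:kac}(iii) together with the loop bracket formula delivers the non-vanishing of $[\g_i^\alpha, \g_j^\beta]$.

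The main obstacle, modest but genuine, is bookkeeping the central contribution in the loop bracket when the $t$-degrees sum to zero in $\mZ$. In each case this contribution either vanishes or is irrelevant: in (i) one controls it by the choice of representative $i$, and in (iii) the fact that $\alpha + \beta$ is a genuine $\h_0$-weight (hence the target root space is a shifted $t^{i+j}\otimes \g_{i+j}^{\alpha+\beta}$, not a purely central contribution) ensures the commutator projects non-trivially onto the $\g$-component. Handling this bookkeeping carefully is what turns the formal translation into an actual proof.
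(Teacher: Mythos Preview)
Your proposal is correct and follows exactly the route the paper intends: the paper gives no separate proof, merely noting that the corollary is the rephrasing of Proposition~\ref{prop:kac} obtained via the identification $\widehat L(\g,\s)_{i\delta'+\gra} = t^i \otimes \g_i^\gra$, and your argument supplies precisely this translation (including the check that $i\delta'+\gra$ is real and the central-extension bookkeeping) that the paper leaves implicit.
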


%\begin{proof}
%Let $\grl = i \grd + \gra$ and $\mu = j\grd + \grb$, then $\grl, \mu \in\Da_{re}$, then $\ol \grl = \gra$ and $\ol \mu = \grb$.

%i) Being $\dim \gog_i^\gra = \dim \widehat L(\g,\s)_\grl$, the first claim follows by Proposition \ref{prop:kac} i). Set $\grl' = i \grd - \gra$, then $\grl' \in\Da_{re}$ and $\grl + \grl' = 2i\grd \in\Da$. Therefore by Proposition \ref{prop:kac} iii) we get
%$$
 %t^{2i} \otimes [\gog_i^\gra, \gog_i^{-\gra}] = [t^i \otimes \gog_i^\gra, t^i \otimes \gog_i^{-\gra}] = [\widehat L(\g,\s)_\grl, \widehat L(\g,\s)_{\grl'}] \neq 0.
%$$

%ii) The claim follows immediately applying Proposition \ref{prop:kac} ii) to $\grl$ and $\mu$.

%iii) By Proposition \ref{prop:kac} iii) we have
%\[
 %t^{2i} \otimes [\gog_i^\gra, \gog_j^{\grb}] = [t^i \otimes \gog_i^\gra, t^j \otimes \gog_j^\grb] = [\widehat L(\g,\s)_\grl, \widehat L(\g,\s)_\mu] \neq 0. \qedhere
%\]
%$\end{proof}

%Recall that $\Pi_0$ denotes  the set of simple roots of $\g_0$ corresponding to $\Dp_0$.
 In general, $\Pi_0$ is disconnected and we write $\S|\Pi_0$ to mean that $\S$ is a connected component of $\Pi_0$.   Clearly,  the Weyl group $W_0$ of $\g_0$ is the direct product of the $W(\S),\,\S|\Pi_0$.
If $\theta_\S$ is the highest root of $\D(\S)$, set
 \begin{eqnarray*}
\Da_0&=&\{\a+\ganz k\d\mid \a\in\D_0\}\cup\pm\nat k\d,\\
 \Pia_0&=&\Pi_0\cup\{k\d-\theta_\S\mid \S|\Pi_0\},\\ 
 \Dap_0&=&\Dp_0\cup\{\a\in\Da_0\mid \a(d)>0\}.\end{eqnarray*}
 Denote by $\Wa_0$ the  Weyl group of $\Da_0$.
If $\a\in\Da$, let $[\a : \a_i]$  be the coefficient of $\a_i$ in the expansion of $\a$ in terms of $\Pia$. Set 
 $$
 \height_\s(\a)=\sum_{i=0}^n s_i [\a : \a_i]
 $$
 and, for  $i\in\ganz$,   
$$\Da_i=\{\a\in\Da\mid \height_\s(\a)=i\}.$$
Note that if  $\a\in\widehat  \Phi_i$, then  $\ov\a$ is a weight of $\g_i$.

\subsection{$B_0$-stable subalgebras in $\g_1$ and $\s$-minuscule elements}\label{bs} In this subsection  we assume that $\si$ is an (indecomposable) involution. With this assumption, $\Pi_1$ has at most two elements.  If $\Pia$ is simply laced, the real roots of $\Da$ are regarded as long. 
\begin{definition}\label{nc}
We say that $\eta\in\Da$ is complex if $\ov\eta\in\D_0\cap\D_1$.\end{definition}
 It is clear that complex roots can occurr  only if $\rk\, \g_0<\rk\,\g$. Moreover, if  $\g$ is simple and $\rk\,\g_0<\rk\,\g$, then $\eta\in\Da$ is complex if and only if it  belongs to $\Da_{re}$ and it is not long (see \cite{CMPP}). The case of  $\g$ semisimple and not simple  corresponds to $\g$ equal to the sum $\k\oplus\k$  of two isomorphic simple ideals, $\s$ the flip involution, $\g_0=\k$, and $\g_1\simeq\k$ with $\k$ acting on itself via the adjoint representation. 
 %In this case all real roots are complex.
%\footnote{Se non mi sbaglio, da quanto mi dicevate $\a_p$ complessa corrisponde essenzialmente al caso aggiunto. Insomma il vantaggio di questa formulazione con le radici complesse e non complesse \`e che in effetti cos\`i ridimostriamo insieme caso aggiunto e caso simmetrico. Credo che varrebbe la pena osservare qualcosa a riguardo, altrimenti non si capisce bene il perch\'e di questa formulazione diversa da quella di Panyushev.}

For $w\in\Wa$, define its set of inversions
\begin{equation}\label{nw}
N(w)=\{\a\in\Dap\mid w^{-1}(\a)\in-\Dap\}.
\end{equation}
Recall that a finite subset $A$  of positive  roots of an affine root system is of the form $N(w)$ for some $w\in \Wa$ if and only if both $A$ and $\Dap \setminus A$ are closed under root addition (see e.g. \cite{CP2}). We will refer to this property as {\it biconvexity}.\par
   If $\a$ is a real root in $\Dap$, we let $s_\a$ denote the reflection in $\a$. If $\a_i$ is a simple root we set $s_i=s_{\a_i}$.\par

Recall from \cite{CMP} the following

\begin{definition}\label{sm} An element $w\in \Wa$ is called $\s$-minuscule if $N(w)\subset \Da_1$. 
\end{definition}
We denote by $\Wab$ the set of $\s$-minuscule elements of $\Wa$, and  we regard it  as a poset under the weak Bruhat order.

%\vskip5pt
 %\Let $a$ be the squared length of a long root in $\Dap$. Define
%\\begin{equation}\label{pi0*}
%\\Pia_0^*=
%\\Pi_0\cup
%\\left\{ k\d-\theta_\S\mid a\leq2\Vert\theta_\S\Vert^2\right\},
%\\end{equation}
%\\begin{equation}\label{phis}\Phi_\s=
%\\Pia^*_0\cup\{\a+k\d\mid \a\in\Pi_1, \a \text{ {long and noncomplex}}\}
%\\end{equation}
%\\vskip5pt

%\\vskip5pt
%\Consider the set
%\$$
%\D_\s=\bigcup_{w\in W^\s_{ab}}wC_1.
%\$$

%\ If $\a\in\Da$ then we let $H^+_\a=\{h\in(\h_0)_\R\mid \a(d+h)\ge0\}$. 
%\The following result  has been proved in \cite[Proposition 4.1]{IMRN} and refined in \cite{CMPP}.

%\\begin{proposition}\label{Dsigma}
%\$$D_\s=\bigcap_{\a\in\Phi_\s}H^+_\a.
%\$$
%\\end{proposition}

%\\begin{remark} In the adjoint case $\g=\k\oplus\k$, $\k$ simple, $D_\s$ is twice the fundamental alcove of the affine Weyl group of $\k$.
%\\end{remark}

\vskip10pt
We let $\mathcal I_{ab}^\s$ be the set of abelian subalgebras of $\g$ contained in $\g_1$ that are stable under the action of the Borel subalgebra $\b_0$ of $\g_0$ corresponding to $\Dp_0$, or equivalently under the action of the Borel subgroup $B_0 \subset G_0$ with Lie algebra $\b_0$. Inclusion turns $\mathcal I_{ab}^\s$ into a poset. 

\begin{proposition}\cite[Theorem 3.2]{CMP}\label{imrn}  Let $w\in\Wab$. Suppose $N(w)=\{\beta_1,\ldots,\beta_k\}$. The map $\Theta:\Wab\to\mathcal I_{ab}^\s$ defined by
$$w\mapsto\bigoplus_{i=1}^k \g_1^{-\ov\beta_i}$$
is a poset isomophism.
\end{proposition}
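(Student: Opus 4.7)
The plan is to verify that $\Theta$ is well-defined (i.e., lands in $\mathcal I_{ab}^\s$), injective, surjective, and order-preserving. The combinatorial engine throughout is the biconvexity of $N(w)$ for $w\in\Wa$, and the Lie-algebraic engine is Corollary \ref{cor:commutatori}, which translates the non-vanishing of brackets into root additions in $\Da$.

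For well-definedness, $\Theta(w)\subset\g_1$ is immediate, since every $\beta_i\in N(w)\subset\Da_1$ has the form $\beta_i=\d'+\ov{\beta_i}$ with $\ov{\beta_i}\in\Phi_1$. To see that $\Theta(w)$ is abelian, take distinct $\beta,\beta'\in N(w)$: their sum has $\height_\s=2$, so if $\beta+\beta'\in\Da$ then biconvexity of $N(w)$ would force $\beta+\beta'\in N(w)\subset\Da_1$, which is impossible. Hence $\beta+\beta'\notin\Da$, which rules out both $\ov\beta+\ov{\beta'}=0$ (else $\beta+\beta'=2\d'$ would be a positive imaginary root) and $\ov\beta+\ov{\beta'}\in\Phi_0$ (else $\beta+\beta'$ would be a real root at height $2$); Corollary \ref{cor:commutatori} then yields $[\g_1^{-\ov\beta},\g_1^{-\ov{\beta'}}]=0$. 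For $\b_0$-stability, fix $\gamma\in\Dp_0$ and $\beta\in N(w)$: the bracket $[\g_0^\gamma,\g_1^{-\ov\beta}]\subset\g_1^{\gamma-\ov\beta}$ is nonzero only if $\beta-\gamma\in\Da_1\cap\Dap$, and in that case, writing $\beta=\gamma+(\beta-\gamma)$ with $\gamma\in\Dp_0\setminus N(w)$, biconvexity of the complement $\Dap\setminus N(w)$ forces $\beta-\gamma\in N(w)$, so the bracket's image lies in $\g_1^{-\ov{\beta-\gamma}}\subset\Theta(w)$.

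Injectivity is straightforward: $\Theta(w)$ determines the set of weights $\{-\ov{\beta_i}\}_i$, hence each $\beta_i=\d'+\ov{\beta_i}$, hence $N(w)$, hence $w$. For surjectivity, given $\aa=\bigoplus_{\mu\in S}\g_1^\mu\in\mathcal I_{ab}^\s$, set $N_\aa=\{\d'-\mu\mid\mu\in S\}\subset\Da_1\cap\Dap$; it suffices to prove $N_\aa$ is biconvex, for then $N_\aa=N(w)$ for a unique $\s$-minuscule $w\in\Wa$, necessarily with $\Theta(w)=\aa$. The abelian hypothesis, read through Corollary \ref{cor:commutatori} in reverse, forces sums of two elements of $N_\aa$ to lie outside $\Da$, yielding closure of $N_\aa$. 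For closure of $\Dap\setminus N_\aa$, the nontrivial case is $\gamma\in\Dp_0$ and $\beta=\d'-\mu\in(\Da_1\cap\Dap)\setminus N_\aa$ with $\gamma+\beta\in\Dap$: if $\gamma+\beta\in N_\aa$, then $\mu-\gamma\in S$ and $\g_1^{\mu-\gamma}\subset\aa$, so $\b_0$-stability combined with Corollary \ref{cor:commutatori}(iii) (which guarantees that $[\g_0^\gamma,\g_1^{\mu-\gamma}]$ hits $\g_1^\mu$ non-trivially) forces $\mu\in S$, contradicting $\beta\notin N_\aa$. Finally, the order-isomorphism property is immediate from the characterization $w\leq w'$ iff $N(w)\subseteq N(w')$ of the weak Bruhat order, which under $\Theta$ matches $\Theta(w)\subseteq\Theta(w')$.

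The main obstacle I anticipate is the coordinated handling of biconvexity in its two incarnations (closure of $N$ and closure of its complement), each matched to its Lie-algebraic counterpart (abelianness and $\b_0$-stability); once the dictionary via Corollary \ref{cor:commutatori} is set up, only careful bookkeeping of heights and signs in the affine root system remains.
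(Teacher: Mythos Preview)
The paper does not prove this proposition; it is quoted as \cite[Theorem 3.2]{CMP} and used as a black box. Your argument is a correct self-contained reconstruction of that result, and the dictionary you set up (closure of $N(w)$ $\leftrightarrow$ abelianness, closure of $\Dap\setminus N(w)$ $\leftrightarrow$ $\b_0$-stability, via Corollary~\ref{cor:commutatori} and the identification $\widehat L(\g,\s)_{-\beta}=t^{-1}\otimes\g_1^{-\ov\beta}$ for $\beta\in\Da_1$) is exactly the right one.

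A couple of small points are worth making explicit for a complete write-up. First, the identity $\beta=\d'+\ov\beta$ for $\beta\in\Da_1\cap\Dap$ follows from $\a_i=s_i\d'+\ov{\a_i}$, so that any $\beta=\sum c_i\a_i$ satisfies $\beta=\height_\s(\beta)\,\d'+\ov\beta$; this also shows that $\beta\mapsto\ov\beta$ is injective on $\Da_1$, which you use for injectivity of $\Theta$. Second, in the closure of the complement you assert that the only nontrivial case is $\gamma\in\Phi_0^+$; this is correct because $\Da_0\cap\Dap=\Phi_0^+$ (a positive root with $\height_\s=0$ is supported on $\Pi_0$, which spans a finite root system), while any sum landing in $\Da_{\geq 2}$ is automatically outside $N_\aa\subset\Da_1$. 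Third, in surjectivity you implicitly use that $\aa$ has trivial zero-weight part; the paper records this at the start of Section~\ref{s4} as a consequence of \cite[Proposition~4.9]{Pa3}. With these remarks your proof stands.
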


%\begin{remark}\label{inL}
%The natural isomorphism of $\g^{\ov 0}$-modules $\g^{\ov 1}\cong t^{-1}\otimes \g^{\ov 1}$ { maps the $\b^{\ov 0}$-stable abelian subspaces of $\g^{\ov 1}$ to  $\b^{\ov 0}$-stable abelian subspaces of $\widehat L(\g,\s)$.} Through this isomorphism, the map of the above proposition associates to $w\in \Wab$ the $\b^{\ov 0}$-stable abelian  subalgebra 
%$\bigoplus_{i=1}^k\widehat L(\g,\s)_{-\be_i}$.
%\end{remark}

Assume that $\g_0$ is semisimple; then there is an index $p$ such that  $\Pi_0=\Pia\setminus \{\a_p\}$. Assume furthermore that $\a_p$ is non-complex (in particular, $\g$ is simple). Set $\Pi_{0,\a_p}=\Pi_0\cap\a_p^\perp$, $W_{0, \a_p}=W(\Pi_{0,\a_p}),$
and denote by $w_{0, \a_p}$ the longest element of $W_{0, \a_p}$.  Let  $w_0$ be the longest element of  $W_0$. Set
%\begin{equation}\label{wp}
$$w_{p}=s_p w_{0, \a_p}w_0$$
%\end{equation}
In \cite{CMPP} it is proved that
$w_{p}\in \Wab$ if and only if $\a_p$ is long; in such a case, the abelian $B_0$-stable subalgebra $\aa_p$ corresponding to $w_{p}$ is maximal. 

%%%%%%%%%%%%%%%%%%%%%%%%%%%%%%%%%%%%%%%%%%%%%%%%%%%%%%%
%%%%%%%%%%%%%%%%%%%%%%%%%%%%%%%%%%%%%%%%%%%%%%%%%%%%%%%
\section{$B_0$-orbits in $B_0$-stable subalgebras contained in $\gog_1$}\label{s4}
%%%%%%%%%%%%%%%%%%%%%%%%%%%%%%%%%%%%%%%%%%%%%%%%%%%%%%%
%%%%%%%%%%%%%%%%%%%%%%%%%%%%%%%%%%%%%%%%%%%%%%%%%%%%%%%

Throughout this section, we will assume that $\grs : \gog \lra \gog$ is an (indecomposable) automorphism of order $m$, and that $\aa$ is a $B_0$-stable subalgebra of $\g$ contained in $\gog_1$. By \cite[Proposition 4.9]{Pa3}, $\aa$ contains no semisimple element. In particular, $\aa \cap \gog^0_1 = 0$ and $\aa$ is completely determined by its set of weights $\Psi(\aa) \subset \Phi_1$:
$$
	\aa = \bigoplus_{\gra \in \Psi(\aa)} \gog_1^\gra.
$$

Since $\aa$ is $B_0$-stable it follows that $G_0 \aa$ is closed; since it contains no semisimple element it follows that every element in $\aa$ is nilpotent. Since there are only finitely many nilpotent $G_0$-orbits in $\gog_1$, it follows that $G_0 \aa$ is the closure of such an orbit.

For all $\gra \in \Phi_i$, fix a non-zero element $x_i^\gra \in \gog_i^\gra$. If $v \in \aa$ and $v = \sum_\a c_\gra x_1^\gra$, then we set  $\supp(v) = \{\gra \in \Psi(\aa) \st c_\gra \neq 0\}$. If $\calS \subset \Psi(\aa)$ we set 
$$x_\calS = \sum_{\gra \in \calS} x_1^\gra.$$

\begin{theorem} \label{teo:B0-orbite}
Let $\aa$ be a $B_0$-stable abelian subalgebra in $\g_1$. For all $x\in\aa$, there is a unique orthogonal subset $\calS$ of $\Psi(\aa)$ such that $B_0x = B_0x_{\calS}$.
In particular, $B_0$ acts on $\aa$ with finitely many orbits, which are parametrized by the orthogonal subsets of $\Psi(\aa)$.
\end{theorem}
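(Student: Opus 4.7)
The plan relies on Corollary~\ref{cor:commutatori} combined with a dominance-type partial order on $\Psi(\aa)$, following the overall strategy of Panyushev~\cite{Pa5}. As a preliminary observation, distinct weights $\beta,\gamma\in\Psi(\aa)$ necessarily satisfy $(\beta,\gamma)\geq 0$: otherwise Corollary~\ref{cor:commutatori} produces a non-zero bracket $[\g_1^\beta,\g_1^\gamma]$ (invoking parts~(ii)-(iii) when $\beta+\gamma\neq 0$, or part~(i) when $\gamma=-\beta$), contradicting $[\aa,\aa]=0$. Moreover, $(\beta,\gamma)>0$ forces $\beta-\gamma\in\Phi_0$ by part~(ii).

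\medskip

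\emph{Existence of an orthogonal normal form.} Endow $\Psi(\aa)$ with the partial order $\beta\preceq\gamma\iff\gamma-\beta\in\mN\Pi_0$ inherited from dominance on $\h_0^*$, and observe that the $U_0$-action, generated by $\exp(s\,e_\a)$ with $\a\in\Phi_0^+$, only introduces weight-components strictly $\succ$ those already in the support. Consequently $\calM(x):=\min_{\preceq}\supp(x)$ is a $B_0$-invariant subset of $\Psi(\aa)$; it is also orthogonal, because distinct elements of a $\preceq$-antichain cannot differ by an element of $\Phi_0$, whence by the non-acuteness observation their pairing must vanish. Use $T_0$ to rescale the coefficients at $\calM(x)$ to $1$ (possible since orthogonal weights are linearly independent). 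For each $\gamma\in\supp(x)\setminus\calM(x)$ with $(\beta,\gamma)>0$ for some $\beta\in\calM(x)$, the non-acuteness observation and the $\preceq$-minimality of $\beta$ force $\gamma-\beta\in\Phi_0^+$; fixing a non-zero $e_{\gamma-\beta}\in\g_0^{\gamma-\beta}$, Corollary~\ref{cor:commutatori}(iii) yields
$$
\exp(s\,e_{\gamma-\beta})\cdot x=x+s\,C\,x_1^\gamma+(\text{components at weights strictly $\succ\gamma$}),\qquad C\neq 0,
$$
the $\calM(x)$-coefficients being untouched (any putative contribution would require some $\delta\prec\calM(x)$ in $\supp(x)$, impossible by minimality). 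An appropriate $s$ kills $c_\gamma$, and running this elimination on the non-orthogonal partners of $\calM(x)$ in increasing $\preceq$-order ensures that no previously-cleared weight is resurrected. After finitely many steps we obtain $u\in U_0$ with $u\cdot x=x_{\calM(x)}+z$, $\supp(z)\subset\calM(x)^\perp\cap(\Psi(\aa)\setminus\calM(x))$. Recursing on $z$ inside the $B_0$-stable abelian subalgebra $\bigoplus_{\gamma\in\Psi(\aa)\setminus\calM(x)}\g_1^\gamma$ (which is $B_0$-stable precisely because $\calM(x)$ is $\preceq$-minimal in $\Psi(\aa)$, so no $B_0$-move can produce a $\calM(x)$-component from elsewhere) builds further orthogonal layers, all lying in $\calM(x)^\perp$; their union with $\calM(x)$ is the desired orthogonal $\calS$.

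\medskip

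\emph{Uniqueness and finiteness.} Suppose $\calS,\calS'\subseteq\Psi(\aa)$ are orthogonal and $b=ut\in B_0$ (with $u\in U_0$, $t\in T_0$) satisfies $b\cdot x_\calS=x_{\calS'}$. The $\preceq$-monotonicity of $U_0$ gives
$$
b\cdot x_\calS=\sum_{\beta\in\calS}\beta(t)\,x_1^\beta+(\text{terms at weights strictly $\succ$ some $\beta\in\calS$}),
$$
and equating with $x_{\calS'}$, using $\beta(t)\neq 0$, forces $\calS\subseteq\calS'$ with $\beta(t)=1$ on $\calS$. Applying the same argument to $b^{-1}\in B_0$ yields $\calS'\subseteq\calS$, whence $\calS=\calS'$. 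Finiteness of the orbit set is immediate since $\Psi(\aa)$ is finite.

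\medskip

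\emph{Main obstacle.} The technical core is the iterative elimination in the existence step: $U_0$-moves designed to cancel a coefficient at $\gamma$ unavoidably create new support at strictly higher weights in $\preceq$. The first-level pass is clean because its pivots lie in the bottom layer $\calM(x)$, but at subsequent levels one has to verify that the unipotent operators orthogonalising successive residues do not disturb coefficients in the already-handled lower layers---a bookkeeping controlled by the $B_0$-stability of the complementary subalgebra and the monotonicity of $U_0$ with respect to $\preceq$, but whose explicit verification requires careful tracking of iterated $\mathrm{ad}$-actions.
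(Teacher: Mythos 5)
Your overall strategy (minimal layer, unipotent elimination, recursion) is the paper's, but two steps that you treat as bookkeeping are in fact where the real content lies, and as written they are gaps.

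First, the existence step. Your displayed formula $\exp(s\,e_{\grg-\grb})\cdot x=x+sC x_1^\grg+(\text{components at weights strictly }\succ\grg)$, and hence the claim that a single pass in increasing $\preceq$-order never resurrects a cleared weight, is not justified by the ``non-acuteness observation'' alone. The operator $\exp(s\,e_{\grg-\grb})$ creates components at $\eta+k(\grg-\grb)$ for every $\eta$ in the current support with $\eta+(\grg-\grb)\in\Phi_1$; such a weight is $\succ\grg$ only when $\eta\succ\grb$. For $\eta$ \emph{orthogonal} to $\grb$ (equivalently, incomparable to it), the new weight $\eta+(\grg-\grb)$ is incomparable to $\grg$ and may precede it in your linear extension, i.e.\ may already have been cleared. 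Ruling this out requires exactly the paper's Lemma~\ref{lem:adding-roots}: if $\gra,\grb\in\Psi(\aa)$ are orthogonal, $\grg_0\in\Phi_0$ and $\gra+\grg_0\in\Psi(\aa)$, then $\grb+\grg_0\notin\Psi(\aa)$ (hence, by $B_0$-stability, $\notin\Phi_1$). This is a genuinely new combinatorial statement beyond Corollary~\ref{cor:commutatori}, with its own case analysis; you neither state nor prove it, and your ``Main obstacle'' paragraph essentially defers it. The paper also replaces your single-pass termination argument by an induction on $\dim\aa_{\mathcal T(x)}$, which is the robust way to handle the weights that \emph{are} legitimately created above $\grg$.

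Second, the uniqueness step fails as stated because orthogonal subsets of $\Psi(\aa)$ need not be antichains for $\preceq$. For instance, in the Hermitian pair $(B_n,\gra_1)$ the maximal orthogonal set $\{\grb_1,\grb_1'\}$ is a chain, $\grb_1\prec\grb_1'$, and a unipotent element $\exp(s\,e_\mu)$ with $\mu=\gra_2+\dots+\gra_n$ sends $x_1^{\grb_1}$ to $x_1^{\grb_1}+c_1x_1^{\grb_n}+c_2x_1^{\grb_1'}$ with $c_2\neq 0$ for suitable $s$. Thus in $b\cdot x_\calS$ the coefficient at a non-minimal $\grb\in\calS$ is $\grb(t)$ \emph{plus} contributions from $u$ acting on lower elements of $\calS$, and these can cancel; ``equating with $x_{\calS'}$ forces $\calS\subseteq\calS'$'' only identifies the minimal layers. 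The paper circumvents this by comparing the $B_0$-stable linear spans ($\langle B_0x_\calS\rangle=\aa_\calS=\aa_{\calS'}$), peeling off only $\grG=\min\Psi(\aa_\calS)\subset\calS\cap\calS'$, and running a downward induction on the strictly smaller subalgebra $\aa_{\calS\setminus\grG}$. Your argument needs an analogous induction (or some other device) to handle comparable orthogonal weights.
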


 In the special case of the involution $\sigma:\g\oplus \g\to \g\oplus\g$, $(x,y)\mapsto(y,x)$, $\mathcal I^\s_{ab}$ is the set of abelian ideals of $\gob$ and, for such an ideal $\aa$, $G\aa$ is always the closure of a spherical nilpotent $G$-orbit in $\gog$ \cite{PR}. Moreover, the closure of a spherical nilpotent orbit in $\gog$  can be realized as $G\aa$ for some abelian ideal  $\aa$ (up to choosing  the Borel subalgebra $\gob$ in a compatible way). In this case, Panyushev \cite{Pa5} has recently given a new proof of the finiteness of the $B$-orbits, by giving an explicit parametrization of the $B$-orbits in $\aa$. Our proof of Theorem \ref{teo:B0-orbite} will follow closely the proof of \cite[Theorem~2.2]{Pa5}.

By Corollary \ref{cor:commutatori}, the following properties for a  $B_0$-stable abelian subalgebra  $\aa\subset\gog_1$ hold.
\begin{itemize}
	\item[(A1)] If $\gra \in \Psi(\aa)$, then $-\gra \not \in \Psi(\aa)$.
	\item[(A2)] Let $\gra, \grb \in \Psi(\aa)$, then $\gra+\grb \not \in \Phi_2$.
	\item[(A3)] Let $\gra \in \Psi(\aa)$ and $\grg \in \Phi_0^+$ be such that $\gra + \grg \in \Phi_1$, then $\gra + \grg \in \Psi(\aa)$.
\end{itemize}

\begin{lemma} 	\label{prop:orthogonality}
For  $\gra, \grb \in \Psi(\aa),\,\gra\ne\grb,$ the following statements are equivalent:
\begin{itemize}
\item[i)]  $\gra, \grb$ are orthogonal;
\item[ii)] $\gra - \grb \not \in \Phi_0$;
\item[iii)] $\grd'+\gra$ and $\grd'+\grb$ are strongly orthogonal in $\Da$.
\end{itemize}
\end{lemma}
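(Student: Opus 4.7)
The plan is to reduce the three statements to elementary properties of the $\ha$-root system of $\widehat L(\g,\s)$, using only the closure conditions (A1)--(A2) of $\Psi(\aa)$, Corollary \ref{cor:commutatori}, and the string structure in Proposition \ref{prop:kac}. I would first establish (i) $\Leftrightarrow$ (ii) and then (iii) $\Leftrightarrow$ (i) $\wedge$ (ii), which together yield the three-way equivalence.

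For (ii) $\Rightarrow$ (i): since $\alpha, \beta \in \Phi_1$ with $\alpha \neq \beta$ and since $\alpha + \beta \notin \Phi_2$ by (A2), Corollary \ref{cor:commutatori}(ii) forces $(\alpha, \beta) \geq 0$; and the same corollary says that $(\alpha,\beta) > 0$ would put $\alpha - \beta$ into $\Phi_0$, contradicting (ii). For the reverse (i) $\Rightarrow$ (ii), I would argue by contradiction: suppose $(\alpha, \beta) = 0$ and $\alpha - \beta \in \Phi_0$. Setting $\lambda = \d' + \alpha$ and $\mu = \d' + \beta$, the identity $\langle \mu, \lambda^\vee\rangle = \langle \beta, \alpha^\vee\rangle = 0$ recalled in Section \ref{ta} shows that the $\lambda$-string through $\mu$ described by Proposition \ref{prop:kac}(ii) is symmetric around $\mu$. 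Since $\mu - \lambda = \beta - \alpha \in \Da$ by assumption, the string must contain $\mu + \lambda = 2\d' + \alpha + \beta$ as well, which can be a real root of $\Da$ only if $\alpha + \beta \in \Phi_2$, again contradicting (A2).

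For the equivalence of (iii) with the conjunction (i) $\wedge$ (ii), I would unpack the definition of strong orthogonality in $\Da$. The bilinear identity $(\d' + \alpha, \d' + \beta) = (\alpha, \beta)$, which follows from $(\d', \d') = (\d', \h_0^*) = 0$, translates the orthogonality condition into (i). The difference $(\d' + \alpha) - (\d' + \beta) = \alpha - \beta$ has zero $\d'$-component, so it cannot be imaginary; hence it lies in $\Da$ iff $\alpha - \beta \in \Phi_0$, turning the difference condition into (ii). Finally, the sum $2\d' + \alpha + \beta$ cannot be imaginary (since $\alpha + \beta \neq 0$ by (A1)) and cannot be real (since $\alpha + \beta \notin \Phi_2$ by (A2)), so this condition is automatic. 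This gives (iii) $\Leftrightarrow$ (i) $\wedge$ (ii), and combined with the first equivalence the three statements coincide. The main obstacle to anticipate is the clean bookkeeping of the $\ha$-versus-$\h_0$ duality and the distinction between real and imaginary roots in $\Da$; once this is handled carefully, the rest of the argument is routine.
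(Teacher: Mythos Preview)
Your proposal is correct and follows essentially the same route as the paper. The only cosmetic difference is in $(i)\Rightarrow(ii)$: the paper applies the reflection $s_{\delta'+\beta}$ to the root $\alpha-\beta$ to land on $2\delta'+\alpha+\beta$, whereas you invoke the symmetry of the $\lambda$-string through $\mu$ when $\langle\mu,\lambda^\vee\rangle=0$; these are two phrasings of the same fact. Your treatment of (iii) is more explicit than the paper's one-line ``clearly equivalent,'' and your remark that $\alpha+\beta\neq 0$ by (A1) is exactly what is needed to rule out the imaginary case for $2\delta'+\alpha+\beta$ (you might state this already in the $(i)\Rightarrow(ii)$ step rather than deferring it to the (iii) discussion).
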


\begin{proof} 
$(1)\Rightarrow (2)$ Suppose that $(\gra,\grb) = 0$ and $\gra - \grb \in \Phi_0$. Then $\grd'+\gra, \grd' + \grb \in\Da$ are orthogonal as well, and
$$
	s_{\grd'+\grb}(\gra - \grb) = s_{\grd'+\grb}(\grd'+\gra)- s_{\grd'+\grb}(\grd' + \grb) = 2 \grd '+ \gra + \grb.
$$
It follows that $2 \grd '+ \gra + \grb \in\Da$, that is $\gra + \grb \in \Phi_2$, contradicting (A2).

$(2)\Rightarrow (1)$
Suppose that $(\gra,\grb) \neq 0$. If $(\gra, \grb) < 0$, then Corollary \ref{cor:commutatori} implies $\gra + \grb \in \Phi_2$, contradicting (A2). Therefore it must be $(\gra, \grb) > 0$, and again by Corollary \ref{cor:commutatori} we get $\gra - \grb \in \Phi_0$.
\par
Statement  (3) is clearly equivalent to the  others.
\end{proof}

As in \cite{Pa5}, the key step to prove Theorem~\ref{teo:B0-orbite} is the following combinatorial lemma, which generalizes \cite[Lemma~1.2]{Pa5} to the graded setting.
% by making use of the affine root system $\widehat \Phi$.

\begin{lemma}	\label{lem:adding-roots}
Let $\gra, \grb \in \Psi(\aa)$ be orthogonal weights and let $\grg \in \Phi_0$.  If $\gra + \grg \in \Psi(\aa)$, then $\grb + \grg \not \in \Psi(\aa)$
%\begin{itemize}
%	\item[i)] If $\gra + \grg \in \Psi(\aa)$, then $\grb + \grg \not \in \Psi(\aa)$.
%	\item[ii)] If $\gra - \grg \in \Psi(\aa)$, then $\grb - \grg \not \in \Psi(\aa)$.
%\end{itemize}
\end{lemma}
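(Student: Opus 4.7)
The plan is to argue by contradiction: suppose that $\grb+\grg \in \Psi(\aa)$ as well, and derive a numerical contradiction by computing the inner product $(\gra+\grb,\,\grg)$ in two incompatible ways.

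First I would note that $\gra+\grg$ and $\grb+\grg$ are two distinct elements of $\Psi(\aa)$, the distinctness following from $\gra\ne\grb$ (as they are nonzero orthogonal weights). Their difference $\gra-\grb$ does not lie in $\Phi_0$: by Lemma \ref{prop:orthogonality} applied to the orthogonal pair $\gra,\grb\in\Psi(\aa)$, the equivalence (i)$\Leftrightarrow$(ii) gives $\gra-\grb\notin\Phi_0$. A second application of Lemma \ref{prop:orthogonality}, now to the distinct pair $\gra+\grg,\,\grb+\grg\in\Psi(\aa)$, forces these two weights to be orthogonal. Expanding $(\gra+\grg,\,\grb+\grg)=0$ and using $(\gra,\grb)=0$ yields the key identity
$$
(\gra+\grb,\,\grg) \;=\; -(\grg,\grg),
$$
whose right-hand side is strictly negative since $\grg\in\Phi_0$ is a root of $\gog_0$, so $(\grg,\grg)>0$.

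The second step would establish the reverse inequality $(\gra+\grb,\,\grg)\geq 0$. For this I would examine $(\gra+\grg,\,\grb)=(\grg,\grb)$: since $\gra+\grg$ and $\grb$ are distinct elements of $\Psi(\aa)\subset\Phi_1$ (distinctness reducing again to the fact that $\gra+\grg=\grb$ would give $\gra-\grb=-\grg\in\Phi_0$, excluded above), if $(\grg,\grb)$ were strictly negative then Corollary \ref{cor:commutatori}(ii) would place $\gra+\grb+\grg$ in $\Phi_2$, contradicting property (A2) applied to the pair $\gra+\grg,\,\grb\in\Psi(\aa)$. Hence $(\grg,\grb)\geq 0$. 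The hypotheses on $\gra$ and $\grb$ are symmetric, so the same argument applied to $\grb+\grg$ and $\gra$ delivers $(\grg,\gra)\geq 0$. Summing these two inequalities gives $(\gra+\grb,\,\grg)\geq 0$, contradicting the first step.

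The argument is short and I do not anticipate a serious obstacle. The only point that requires care is keeping track of the distinctness conditions needed to invoke Corollary \ref{cor:commutatori}(ii) on weights in the same graded piece $\Phi_1$; in each case they reduce to the hypothesis $\gra-\grb\notin\Phi_0$, which is itself a consequence of the orthogonality of $\gra$ and $\grb$ via Lemma \ref{prop:orthogonality}.
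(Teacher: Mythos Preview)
Your proof is correct and follows essentially the same approach as the paper's: both arguments use Lemma~\ref{prop:orthogonality} to conclude that $(\gra+\grg,\grb+\grg)=0$ from $\gra-\grb\notin\Phi_0$, expand this to get $(\gra,\grg)+(\grb,\grg)=-(\grg,\grg)<0$, and separately show $(\gra,\grg),(\grb,\grg)\geq 0$ via (A2) and Corollary~\ref{cor:commutatori}. Your presentation simply reverses the order of these two steps, and is in fact slightly tidier since the paper treats the case $(\gra,\grg)=(\grb,\grg)=0$ separately even though it is already subsumed by the final contradiction.
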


\begin{proof}
Assume that both $\gra + \grg$ and $\grb + \grg$ belong to $\Psi(\aa)$. Suppose that $(\gra,\grg) < 0$: then $(\grb + \grg,\gra) < 0$ as well, and Corollary \ref{cor:commutatori} implies $\gra + \grb + \grg \in \Phi_2$, against (A2). Similarly it cannot be $(\grb,\grg) < 0$. Suppose that $(\gra, \grg) = (\grb, \grg) = 0$: then $(\gra+\grg, \grb+\grg) > 0$, hence Corollary~\ref{cor:commutatori} ii) implies $\gra - \grb \in \Phi_0$, which contradicts the fact that $\gra$ and $\grb$ are orthogonal by Lemma~\ref{prop:orthogonality}. Therefore it must be $(\gra,\grg) \geq 0$ and $(\grb,\grg) \geq 0$.
\par
On the other hand, again by Lemma~\ref{prop:orthogonality}, we have that 
$$(\gra,\grb)=0\iff \gra-\grb\notin \Phi_0 \iff (\gra+\grg)-(\grb+\grg)\notin \Phi_0 \iff (\gra+\grg,\grb+\grg)=0.$$
In turn, by the orthogonality of $\gra$ and $\grb$, the last equality implies that either  $(\gra,\grg)<0$ or $(\grb,\grg)<0$, which is a contradiction.
\end{proof}

%Suppose that $\theta$ is an inner automorphism. Then $\rk \gog_0 = \rk \gog$, hence $\Phi = \bigcup \Phi_i$ and it follows that $\gog$ is of type $G_2$.
%
%\underline{Da qui in poi assumo che $\theta$ sia un'involuzione.} Suppose that $\theta$ is an inner involution, then there is a unique $\mZ_2$-grading of $\gog$, corresponding to the simple root $\gra_2$: it follows that $\mathfrak b_0 = \gou_{\gra_1} \oplus \gou_\rho$ (where $\rho$ denotes the highest root), and $\gog_0$ is a subalgebra of $\gog$ of type $A_1 \times A_1$. Since there are no $\rho$-strings of length greater than 2, the unique possibile case is $\grg = \gra_1$, $\gra = 2\gra_1 + \gra_2$ and $\grb = -\gra_1 -\gra_2$, which is absurd since then $(\gra,\grb) \neq 0$. This shows that it cannot be $(\gra,\grg) > 0$, and similarly it cannot be $(\grb,\grg) > 0$.
%
%Suppose that $\theta$ is an outer involution. Since $\gog_0$ is never of type $G_2$, it follows that $\Phi_1 \not \subset \Phi_0$. By direct check we see that $\Phi_1$ never contains such a string in all cases (that is, cases (S8), (S9) and (S10) in \cite{Pa4}), and we get a contradiction.
%
%ii) Suppose that both $\gra - \grg$ and $\grb - \grg$ belong to $\Psi(\aa)$. Then by Proposition \ref{prop:orthogonality} we get that $(\gra - \grg) - (\grb - \grg) = \gra - \grb \not \in \Phi_0$. Therefore by Proposition \ref{prop:orthogonality} again it follows that $\gra - \grg$ and $\grb - \grg$ are orthogonal, and we get an absurd by i).

We denote by $\leq_0$ the dominance order on $\h_0^*$: $\grl \leq_0 \mu$ if $\mu - \grl \in \mN \Phi^+_0$. 
%Given $\gra \in \Psi(\aa)$ we set
%$$\Psi_\gra(\aa) = \{\grb \in \Psi(\aa) \st \grb - \gra \in \Phi_0^+\}.$$
If $\calS$ is  subset of  $\Psi(\aa)$ we denote by $\min (\calS)$ the set of the minimal elements of $\calS$ w.r.t. $\leq_0$, and define two subsets of $\Psi(\aa)$ as follows
\begin{eqnarray*}
\Psi_\calS &=& \{\grb \in \Phi_1 \st \mbox{ there is } \gra \in \calS \mbox{  with } \grb - \gra \in \Phi_0^+\}, \\
\calS^{{\geq}_0} &=& \{\grb \in \Phi_1 \st \mbox{\ there is\ } \gra \in \calS \mbox{ with } \gra \leq_0 \grb \}.
\end{eqnarray*}
We also denote by $\aa_\calS$ the minimal $B_0$-stable subalgebra of $\aa$ containing the weight space $\gog_1^\gra$ for all $\gra \in \calS$, namely
$$
	\aa_\calS = \bigoplus_{\grb \in \calS^{{{\geq}_0}}} \gog_1^\grb.
$$

\begin{lemma}	\label{lemma:argomento-panyushev}Let $\aa\in\mathcal I^\s_{ab}$. Let $\calS$ be an orthogonal subset of $\Psi(\aa)$. Let $x \in \aa$ be such that $\calS\subset\supp(x)$ and $\calS$ is  a lower order ideal in  $\supp(x)$. Then there is $y \in B_0 x$ with the same property such that $\supp(y) \cap \Psi_\calS = \vuoto$.
\end{lemma}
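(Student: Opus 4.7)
Following Panyushev's strategy for the ungraded case (see \cite[proof of Theorem~2.2]{Pa5}), I would argue by induction on $|\supp(x) \cap \Psi_\calS|$, eliminating weights of $\supp(x) \cap \Psi_\calS$ one at a time by acting on $x$ with elements of the root subgroups $U_\gamma \subset B_0$ for $\gamma \in \Phi_0^+$. The base case $\supp(x) \cap \Psi_\calS = \varnothing$ is immediate: take $y = x$.

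For the inductive step, I would pick $\beta$ maximal in $\supp(x) \cap \Psi_\calS$ under $\leq_0$, and choose $\alpha \in \calS$ with $\gamma := \beta - \alpha \in \Phi_0^+$. By Corollary~\ref{cor:commutatori}, $[e_\gamma, x_1^\alpha]$ is a nonzero multiple of $x_1^\beta$, so the coefficient of $x_1^\beta$ in $u_\gamma(t)\,x := \exp(t\,\mathrm{ad}(e_\gamma))\,x$ is a polynomial in $t$ with nonzero linear term; choose $t_0 \in \C$ so that this coefficient vanishes, and set $y' := u_\gamma(t_0)\,x \in B_0 x$, so that $\beta \notin \supp(y')$. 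I would then verify that $y'$ still satisfies the hypotheses of the lemma: for every $\alpha' \in \calS$, the coefficient of $x_1^{\alpha'}$ in $y'$ equals $c_{\alpha'}(x) \neq 0$, since any other contribution would come from some $\alpha' - k\gamma \in \supp(x)$ with $k \geq 1$ lying strictly below $\alpha'$, which by the lower-order-ideal hypothesis combined with the antichain property of $\calS$ (Lemma~\ref{prop:orthogonality}) is impossible; and every weight in $\supp(y') \setminus \supp(x)$ has the form $\alpha''' + k\gamma$ with $\alpha''' \in \supp(x)$ and $k \geq 1$, lying strictly above $\alpha'''$, so a symmetric antichain argument shows that $\calS$ remains a lower order ideal in $\supp(y')$.

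The main obstacle is to ensure that $|\supp(y') \cap \Psi_\calS|$ strictly decreases. After removing $\beta$, one must rule out new weights $\alpha''' + k\gamma \in \supp(y') \setminus \supp(x)$ lying in $\Psi_\calS$. Lemma~\ref{lem:adding-roots} handles the case $\alpha''' \in \calS \setminus \{\alpha\}$: orthogonality of $\alpha$ and $\alpha'''$ together with $\alpha + \gamma = \beta \in \Psi(\aa)$ forces $\alpha''' + \gamma \notin \Psi(\aa)$; by property (A3) then $\alpha''' + \gamma \notin \Phi_1$, and Corollary~\ref{cor:commutatori} gives $[e_\gamma, x_1^{\alpha'''}] = 0$, so no new weight arises from such $\alpha'''$. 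In the remaining cases, namely $\alpha''' = \alpha$ with $k \geq 2$ (producing a new weight $\beta + (k-1)\gamma$ strictly above $\beta$) and $\alpha''' \in \supp(x) \setminus \calS$ (producing a new weight strictly above $\alpha''' \leq_0 \beta$), one argues, via a case analysis of the relevant $\gamma$-root strings that exploits the abelianness of $\aa$ (property (A2)) and the maximality of $\beta$ in $\supp(x) \cap \Psi_\calS$, that these new weights cannot themselves lie in $\Psi_\calS$; alternatively, one refines the induction invariant to a lexicographic comparison of the sorted dominance heights of bad weights, so that the maximality of $\beta$ forces a strict decrease even in the presence of such higher-lying new weights. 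Iterating this reduction then closes the induction and produces the desired $y \in B_0 x$.
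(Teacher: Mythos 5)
Your reduction step is essentially the paper's: pick $\a\in\calS$ and $\grg\in\Phi_0^+$ with $\a+\grg\in\supp(x)$, exponentiate $x_0^\grg$ to kill the coefficient at $\a+\grg$, and check via Lemma~\ref{lem:adding-roots} and the lower-order-ideal hypothesis that $\calS$ and the lower-order-ideal property survive. The genuine gap is exactly where you flag it yourself: termination. Your invariant $|\supp(x)\cap\Psi_\calS|$ need not decrease, because $u_\grg(t_0)x$ can acquire new support weights of the form $\zeta+k\grg$ with $k\ge 1$ (either $\zeta=\a$ with $k\ge 2$, or $\zeta\in\supp(x)\setminus\calS$), and nothing rules out that such a weight lies in $\Psi_\calS$: the maximality of $\beta$ only constrains weights already in $\supp(x)$, whereas the new weights all sit strictly above old ones and are therefore invisible to that maximality. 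Your fallback of a lexicographic comparison of sorted dominance heights fails for the same reason: the new bad weights lie strictly above $\beta$, so the top of the sorted sequence can only grow. Neither proposed patch is carried out, and the claim that ``these new weights cannot themselves lie in $\Psi_\calS$'' is not justified and, as far as I can see, not true in general.

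The paper's fix is a different induction invariant: it inducts on $\dim\aa_{\mathcal T(x)}$, where $\mathcal T(x)=\supp(x)\setminus\calS$ and $\aa_{\mathcal T(x)}$ is the $B_0$-stable subalgebra it generates. This quantity is monotone under the operation precisely because every new weight $\zeta+k\grg$ with $k\ge1$ dominates either $\zeta\in\mathcal T(x)$ or $\a+\grg\in\mathcal T(x)$, hence already lies in $\Psi(\aa_{\mathcal T(x)})$, and it strictly decreases because $\a+\grg$ is removed from the support and is not dominated by any element of $\mathcal T(x')$. To repair your argument you must either adopt such a monotone invariant or genuinely prove that no new element of $\Psi_\calS$ enters the support. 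A secondary slip: you invoke ``the antichain property of $\calS$'', but $\calS$ is only assumed orthogonal, and orthogonal subsets of $\Psi(\aa)$ need not be antichains (that distinction is the subject of Section~\ref{4}). The correct argument that $\pi(u_\grg(\xi)x)=\pi(x)$ uses Lemma~\ref{prop:orthogonality} to exclude $\a'-\grg\in\calS$, strong orthogonality to exclude $\a'-k\grg\in\calS$ for $k\ge 2$, and the lower-order-ideal hypothesis to exclude contributions from $\supp(x)\setminus\calS$, as in the paper.
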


\begin{proof}Set $Z=\{x\in\aa\mid \calS \mbox{  is a lower order ideal in  } \supp(x)\}$. If $x\in Z$, we set $\mathcal T(x)=\supp(x)\setminus \calS$. We prove the claim by induction on $\dim\aa_{\mathcal T(x)}$. If $\dim\aa_{\mathcal T(x)}=0$, then $\supp(x)=\calS$ and there is nothing to prove.

Assume $\dim\aa_{\mathcal T(x)}>0$. 
Set $\calS' = \Psi_\calS$, and for $v \in \aa$, let $\calS'(v) = \calS' \cap \supp(v)$. We can assume that $\calS'(x) \neq \vuoto$, for, otherwise, we can take $y=x$. Then there are $\gra \in \calS$ and $\grg \in \Phi_0^+$ such that $ \gra + \grg\in\supp(x)$. By Lemma~\ref{lem:adding-roots}, it follows that $\gre + \grg \not \in \Psi(\aa)$ for all $\gre \in \calS \setminus \{\gra\}$. If $u_\grg(\xi) \in B_0$ is the element defined by exponentiating $\xi x^\grg_0$ ($\xi \in \mathbb C$), it follows that 
\begin{equation}\label{exp}u_\grg(\xi) x^\be_1= x^\be_1\mbox{ if }\be\in \calS\setminus\{\a\},\quad
u_\grg(\xi) x^\be_1= x^\be_1 + \xi [x^\grg_0,x^\be_1] + \frac{\xi^2}{2} [x^\grg_0,[x^\grg_0,x^\be_1]] + \ldots  \mbox{ otherwise.}
\end{equation}
Let $\pi : \aa \lra \bigoplus_{\gra \in \calS} \gog_1^\gra$ be the projection. We claim that $\pi(u_\grg(\xi) x) = \pi(x)$ for all $\xi \in \mathbb C$. In fact, if $\varepsilon\in\calS\setminus\{\a\}$ then, by (\ref{exp}), $\pi(u_\grg(\xi) x^\gre_1)=x^\gre_1$. Since $\calS$ is strongly orthogonal, $\a+k\gamma\notin \calS$ so $\pi(u_\grg(\xi) x^\gra_1)=x^\gra_1$. Finally, if $\be\notin \calS$ and $\be\in\supp(x)$, then, since $\calS$ is a lower order ideal in $\supp(x)$,  $\be+k\gamma\notin\calS$, so $\pi(u_\grg(\xi) x^\be_1)=\pi(x^\be_1)=0$. 
Choose $\xi_0 \in \mathbb C$ such that $\a+\grg \not \in \supp (u_\grg(\xi_0) x)$ and set
 $x' = u_\grg(\xi_0) x$. By construction $\calS \subset \supp(x')$. We have to prove that  $\calS$ is a lower order ideal in $\supp(x')$. Take $\eta\in\supp(x')$ such that there exists $\beta\in \calS$ with 
 $\eta\leq \beta$. We know that $\eta=\zeta + r \gamma,\, \zeta\in \supp(x)$. Since $\zeta\leq_0 \beta$, we have that $\zeta\in\calS$.
 If $\zeta\ne \a$, then, by Lemma \ref{lem:adding-roots}, $\zeta+\gamma$ is not in $\Phi_1$, in particular $\eta\notin \supp(x')$. If $\zeta=\a$ then, by construction, either $r>1$ or $\eta=\zeta$. If $r>1$ then $\a+\gamma\le_0\a+r\gamma\le_0 \be\in\calS$ and $\a+\gamma\in\supp(x)$, so $\a+\gamma\in\calS$. We already observed that this is not possible, hence  $\eta=\zeta\in\calS$.

   We now prove  that $\aa_{\mathcal T(x')}\subset\aa_{\mathcal T(x)}$.  It suffices to prove that, if $\be\in \mathcal T(x')$ then  $\be\in\Psi(\aa_{\mathcal T(x)})$. Write $\be=\zeta+r\gamma$ with $r\ge0$ and $\zeta\in\supp(x)$. If $\zeta\notin\calS$ then we are already done. If $\zeta\in\calS$ then, as shown above, we have $\zeta=\a$. Note that $r>0$, for, otherwise $\be\in\calS$. Now, if $r>0$, as $\a+\gamma\in\mathcal T(x)$ and $\a+\gamma\le_0\be$, we have that $\be\in\Psi(\aa_{\mathcal T(x)})$.

Since $\a+\gamma \not \in \supp(x')$, it follows that $\aa_{\mathcal T(x')} \subsetneq \aa_{\mathcal T(x)}$. By the induction hypothesis,  it follows that $B_0 x'$ contains an element $y$ such that $ \calS$ is a lower order ideal in  $\supp(y)$ and $\supp(y) \cap \Psi_\calS = \vuoto$. Since $x'\in B_0x$, we have that $y\in B_0x$.
\end{proof}
Thanks to previous lemmas, we can now reproduce the same argument given in \cite[Theorem~2.2]{Pa5} to prove  Theorem~\ref{teo:B0-orbite}.

%We can now prove the main theorem of this section.

\begin{proof}[Proof of Theorem~\ref{teo:B0-orbite}]
We first show that every $B_0$-orbit in $\aa$ possesses a representative of the form $x_\calS$, for some orthogonal subset $\calS \subset \Psi(\aa)$.

Let $v \in \aa$. Set $v_0 = v$ and $\calS_0 = \min \supp(v)$. Notice that $\calS_0$ is ortohogonal by Lemma~\ref{prop:orthogonality}: indeed for all $\gra, \grb \in \calS_0$ we have  by construction that $\gra - \grb \not \in \Phi_0$. Therefore $v_0$ and $\calS_0$ satisfy the assumptions of Lemma~\ref{lemma:argomento-panyushev}, and there is $v_1 \in B_0 v_0$ such that $ \calS_0 \subset \supp(v_1)$ and $\supp(v_1) \cap \Psi_{\calS_0} = \vuoto$. Define $\calS_1 = \calS_0 \cup \min (\supp(v_1) \setminus \calS_0)$: then by construction we still have $\gra - \grb \not \in \Phi_0$ for all $\gra, \grb \in \calS_1$, so that $\calS_1$ is again orthogonal by Lemma~\ref{prop:orthogonality}. It is clear that $\calS_1$ is lower order ideal in $\supp(v_1)$.

More generally, let $i \geq 0$ and suppose that $v_i$ and $\calS_i$ are defined. Then there is $v_{i+1} \in B_0 v_i = B_0 v_0$ such that $\calS_i$ is a lower order ideal in $\supp(v_{i+1})$, $\supp(v_{i+1}) \cap \Psi_{\calS_i} = \vuoto$, and
$$
	\calS_{i+1} = \calS_i \cup \min (\supp(v_{i+1}) \setminus \calS_i)
$$
is an orthogonal subset by Lemma~\ref{prop:orthogonality} that is a lower order ideal in $\supp(v_{i+1})$.

Clearly, $\calS_{i+1}$ is strictly bigger than $\calS_i$, unless $\supp(v_{i+1}) = \calS_i$. Therefore, proceeding inductively, we find an element $v_{k+1} \in B_0v$ whose support equals $\calS_k$, which is an orthogonal subset, hence $B_0v=B_0v_{k+1}=B_0x_{\calS_k}$.

We now show that every $B_0$-orbit contains a unique orthogonal representative $x_\calS$. If $\mathcal S \subset \Psi(\aa)$, notice that the vector space $\langle B_0 x_\calS \rangle$ generated the orbit of $x_\calS$ is $B_0$-stable, therefore it coincides with $\aa_\calS$.

Let $\mathcal S, \mathcal S'$ be orthogonal subsets of $\Psi(\aa)$ and suppose that $B_0 x_\calS = B_0 x_{\mathcal S'}$. Then $\aa_\calS = \aa_{\calS'}$, and we set $\grG = \min \Psi(\aa_\calS)$. Notice that $\grG \subset \calS \cap \calS'$, therefore setting $\calR = \calS \setminus \grG$ and $\calR' = \calS' \setminus \grG$ we can decompose $x_\calS = x_\grG + x_\calR$ and $x_{\calS'} = x_\grG + x_{\calR'}$. Let $B_0=T_0U_0$ be the Levi decomposition of $B_0$. Let $b \in B_0$ be such that $b x_\calS = x_{\calS'}$ and write $b = t^{-1} u$ with $t \in T_0$ and $u \in U_0$. Then $u x_\grG + u x_\calR = t x_\grG + t x_{\calR'}$. It follows that $t x_\grG = u x_\grG = x_\grG$, hence $x_{\calR'} \in B_0 x_\calR$. Since $\aa_\calR \subset \aa_\calS \cap \aa_{\calS'}$ is a smaller $B_0$-stable abelian subalgebra in $\gog_1$ and since $\calR$ and $\calR'$ are orthogonal subsets in $\Psi(\aa_\calR)$, the claim follows proceeding by downward induction.
\end{proof}

The following facts are also proved by adapting the same proofs of \cite{Pa5}. Denote by $\calS_\aa \subset \Psi(\aa)$ the subset constructed as follows: set $\calS_1 = \min \Psi(\aa)$, and for $i > 1$ define inductively
$$
	\calS_i = \min\big(\Psi(\aa) \setminus \bigcup_{j < i} (\calS_j \cup \Psi_{\calS_j}) \big).
$$
Define $\calS_\aa = \bigcup_{i > 0} \calS_i$, which is an orthogonal subset  thanks to Lemma \ref{prop:orthogonality}.

\begin{proposition}\label{prop:varia-panyushev}
Let $\calS \subset \Psi(\aa)$ be an orthogonal subset.
\begin{itemize}
	\item[i)] $B_0 x_\calS$ is open in $\aa$ if and only if $\calS = \calS_\aa$.
	\item[ii)] As a $T_0$-module, the tangent space $\mathrm T_{x_\calS} (B_0 x_\calS)$ decomposes as follows:
$$
\mathrm T_{x_\calS} (B_0 x_\calS) = \bigoplus_{\gra \in \calS \cup \Psi_\calS} \gog_1^\gra.
$$
In particular, $\dim B_0 x_\calS = |\calS| + |\Psi_\calS|$.
\end{itemize}
\end{proposition}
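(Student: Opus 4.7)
The plan is to prove (ii) first by a direct computation of the tangent space, and then to deduce (i) from (ii) together with Theorem~\ref{teo:B0-orbite}.

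For (ii), identifying $\mathrm T_{x_\calS}\aa$ with $\aa$, I would write the orbit tangent space as $\mathrm T_{x_\calS}(B_0 x_\calS) = [\b_0, x_\calS]$ and decompose $\b_0 = \h_0 \oplus \gon_0^+$, where $\gon_0^+$ denotes the nilradical of $\b_0$. The contribution of $\h_0$ is immediate: since the weights in $\calS$ are pairwise distinct, $[\h_0, x_\calS] = \bigoplus_{\gra \in \calS} \gog_1^\gra$. For the contribution of $\gon_0^+$, first fix $\grg \in \Phi_0^+$ and a non-zero $x_0^\grg$, and observe, via Corollary~\ref{cor:commutatori} together with the $B_0$-stability of $\aa$, that $[x_0^\grg, x_\calS]$ is a sum of vectors in $\gog_1^{\gra+\grg}$ over those $\gra \in \calS$ with $\gra + \grg \in \Psi(\aa)$. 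The crucial input is then Lemma~\ref{lem:adding-roots}: for fixed $\grg$ at most one such $\gra$ can occur, so $[x_0^\grg, x_\calS]$ is either zero or a non-zero scalar multiple of $x_1^{\gra+\grg}$, hence an honest weight vector. Summing over $\grg \in \Phi_0^+$ yields $[\gon_0^+, x_\calS] = \bigoplus_{\mu \in \Psi_\calS} \gog_1^\mu$. A quick application of Lemma~\ref{prop:orthogonality} shows that $\calS \cap \Psi_\calS = \vuoto$, so the two pieces fit together into the claimed direct sum decomposition and the dimension formula follows. In particular the subspace $\mathrm T_{x_\calS}(B_0 x_\calS) \subset \aa$ is $T_0$-stable, which is the content of the statement ``as a $T_0$-module''.

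For (i), the dimension formula from (ii) reads $\dim B_0 x_\calS = |\calS \cup \Psi_\calS|$, so $B_0 x_\calS$ is open in $\aa$ if and only if $\calS \cup \Psi_\calS = \Psi(\aa)$. I would then verify that $\calS_\aa$ satisfies this equality by an easy induction on the layers $\calS_i$: the recursive definition of $\calS_\aa$ forces $\Psi(\aa) \setminus \bigcup_i (\calS_i \cup \Psi_{\calS_i}) = \vuoto$ as soon as the process stabilises (which happens after finitely many steps, $\Psi(\aa)$ being finite), and the inclusion $\Psi_{\calS_i} \subseteq \Psi_{\calS_\aa}$ delivers the desired equality. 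Uniqueness is then immediate: an irreducible variety admits at most one open $B_0$-orbit, and distinct orthogonal subsets produce distinct $B_0$-orbits by Theorem~\ref{teo:B0-orbite}, hence $\calS_\aa$ is the only orthogonal subset of $\Psi(\aa)$ meeting the criterion.

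The main obstacle sits in (ii): one must ensure that the pieces $[x_0^\grg, x_\calS]$ assemble to span $\bigoplus_{\mu \in \Psi_\calS} \gog_1^\mu$ on the nose, with no cancellations and no stray weights. Lemma~\ref{lem:adding-roots} is exactly the tool that accomplishes this, by forcing each $[x_0^\grg, x_\calS]$ to be a true weight vector rather than a generic combination. Once this rigidity is established, everything else is routine and closely follows the model proofs in \cite{Pa5}.
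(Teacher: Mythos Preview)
Your proposal is correct and matches what the paper intends: the statement is recorded there without proof, with the remark that it follows ``by adapting the same proofs of \cite{Pa5}'', and your argument is precisely that adaptation, using Lemma~\ref{lem:adding-roots} in place of \cite[Lemma~1.2]{Pa5} to force each $[x_0^\grg,x_\calS]$ to be a genuine weight vector, and then reading off openness from the dimension count and the recursive definition of $\calS_\aa$.
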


\section{Antichains of orthogonal roots in Hermitian symmetric spaces}\label{4}
%%%%%%%%%%%%%%%%%%%%%%%%%%%%%%%%%%%%%%%%%%%%%%%%%%%%%%%
%%%%%%%%%%%%%%%%%%%%%%%%%%%%%%%%%%%%%%%%%%%%%%%%%%%%%%%

Suppose that $\gog$ is a simple Lie algebra and let $\Pi$ be a set of simple roots. Let $\theta$ be the corresponding highest root and let $\gra_q \in \Pi$ be a simple root with $[\theta : \gra_q] = 1$.
%\footnote{A me sembra un po' fuorviante la notazione $\a_q$, perch\'e non \`e determinata da $\Pi$. Capisco che \`e per conformit\`a con quello che facciamo con $(\grS, \a_\grS)$, per\`o in quel caso $\a_\Sigma$ \`e effettivamente determinata da $\Sigma$ (avendo fissato $\a_p$). Comunque se pensate che sia pi\`u chiaro cos\`i non c'\`e problema.} 
Let $\gop^+ \subset \gog$ be the maximal parabolic subalgebra  associated to the set of simple roots $ \Pi \setminus \{\gra_q\}$. Then its nilradical  $\gop^+_\mru$ is abelian; conversely any standard parabolic subalgebra with abelian  nilradical arises in this way.

Let $\gop^+ = \mathfrak l\oplus\gop^+_\mru$ be the Levi decomposition, and let $\gop^-$ be the opposite parabolic subalgebra of $\gop^+$. The decomposition into $\mathfrak l$-submodules $\gog = \mathfrak l \oplus \gop_\mru^+ \oplus \gop_\mru^-$ defines an involution $\grs$ of $\gog$ by setting $\grs(x) = x$ if $x \in \mathfrak l$ and $\grs(x) = -x$ if $x \in  \gop_\mru^+ \oplus \gop_\mru^-$. It is then clear that
$$
\g_0=\mathfrak l,\quad \g_1=\gop_\mru^+ \oplus \gop_\mru^-.$$

Recalling the notation introduced in Section \ref{ta}, the sets $\Phi_i$ attached to $\s$ are $\Phi_1=\Phi_1^+\cup-\Phi_1^+$, where  $\Phi_1^+$ is the set of roots $\be$ such  $[\be:\a_q]=1$, while $\Phi_0$ is the set of roots $\be$ such that $[\be:\a_q]=0$. Observe that in this case $\Phi=\Phi_0\cup\Phi_1$ is the set of roots of $\g$. Since in this case $\h_0$ is a Cartan subalgebra of $\g$, we can choose $h_{reg}\in\h_0$ so that $\a(h_{reg})>0$ for all $\a\in\Pi$.  With this choice, letting $\Phi^+$ denote the set of positive roots of $\g$ corresponding to the choice of $\Pi$, we have that
\begin{eqnarray*}
	\Phi_0^+ &=& \{\grb \in \Phi^+ \st [\grb:\gra_q] = 0\},\\
	\Phi_1^+ &=& \{\grb \in \Phi^+ \st [\grb:\gra_q] > 0\},\\
	\Pi_0 &=&\Pi\setminus\{\a_q\}.
\end{eqnarray*}
We set $\Phi_i^- = -\Phi_i^+$ ($i=0,1$). Clearly, $ \Phi_1^\pm$ is the set of weights of $\h_0$ in $\gop_\mru^\pm$.  We let $\b_0^\pm$ be the Borel subalgebra of $\g_0$ corresponding to $\Phi_0^\pm$.  Recall that $W_0$ denotes the Weyl group of $\g_0$. 

Let $\Ort(\Phi_1^+)$ be the collection of the orthogonal subsets of $\Phi_1^+$, and let $\Ort_{\max}(\Phi_1^+)$ be the collection of the orthogonal subsets of $\Phi_1^+$ which are maximal with respect to  inclusion. Regard $\Phi_1^+$ as a partial ordered set via  $\leq_0$. Since $\gop^+_\mru$ is an abelian subalgebra of $\gog$, by Lemma~\ref{prop:orthogonality} two elements $\gra,\grb \in \Phi_1^+$ are orthogonal if and only if they are strongly orthogonal, if and only if $\gra - \grb \not \in \Phi_0$. In particular, every antichain $\calA \subset \Phi_1^+$ is an orthogonal subset.

Given $\calB \in \Ort(\Phi_1^+)$, let $\mathfrak a_\calB \subset \mathfrak p^+_\mru$ be the $B_0$-stable subalgebra generated by $\calB$. We define a preorder $\vdash$ on $\Ort(\Phi_1^+)$ as follows: if $\calB_1, \calB_2 \in \Ort(\Phi_1^+)$, then $\calB_1 \vdash \calB_2$ if $\mathfrak a_{\calB_1} \subset \mathfrak a_{\calB_2}$. Equivalently, $\calB_1 \vdash \calB_2$ if and only if $\calB_1 \subset \calB_2^{{\geq}_0}$, where, if $\calB \subset \Phi_1^+$, we set
$$
	\calB^{{\geq}_0} = \{ \gra \in \Phi^+_1 \st \mbox{there is }\grb \in \calB \mbox{  such that }  \grb\leq_0\gra\}.
$$

Given $m \leq r$, we will denote by $\Ort_m(\Phi_1^+)$ the set of the orthogonal subsets of cardinality $m$. If $\Phi$ is not simply laced, we will say $\calB \in \Ort(\Phi_1^+)$ \textit{is of type} $(h,k)$ if it contains exactly $h$ short roots and $k$ long roots, and we denote by $\Ort_{(h,k)}(\Phi_1^+)$ the set of the orthogonal subsets of type $(h,k)$. To unify some notations, in the simply laced case we will regard every root as a long root. Therefore if $\Phi$ is simply laced we have $\Ort_m(\Phi_1^+) = \Ort_{(0,m)}(\Phi_1^+)$.

%When $\g$ is simply laced, throughout this section we will consider every root as a short root. If In particular, if $\Phi$ is simply laced and $\calB \subset \Phi_1^+$ is an orthogonal subset of cardinality $m$, then $\calB$ is of type $(m,0)$. We denote by $\Ort_{(h,k)}(\Phi_1^+)$ be the set of the orthogonal subsets of type $(h,k)$.

In this section we will study the antichains of $\Phi_1^+$. We will show that for every orthogonal subset $\calB \subset \Phi_1^+$ there is always an antichain $\calA \subset \Phi_1^+$ such that $\calA\vdash \calB$. We first discuss the simply laced case uniformly; the two remaining cases $(B_n,\gra_1)$ and $(C_n,\gra_n)$ will be treated separately. We summarize our results in the following theorem.

\begin{theorem}\label{antichainbelow}\ 
\begin{itemize}
	\item[i)] Suppose that $\Phi$ is simply laced. For all $\calB \in\Ort(\Phi_1^+)$, there is an antichain $\calA \subset \Phi_1^+$ such that $|\calA| = |\calB|$ and $\calA\vdash \calB$. 
	\item[ii)] Suppose that $\Phi$ is not simply laced. For all $\calB \in\Ort(\Phi_1^+)$ of type $(h,k)$, there is an antichain $\calA \subset \Phi_1^+$ of type $(h+\lfloor k/2 \rfloor, k - 2\lfloor k/2 \rfloor)$ such that $\calA\vdash \calB$. 
\end{itemize}
\end{theorem}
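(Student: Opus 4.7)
The proof splits into the simply laced case (i) and the non-simply laced case (ii), the latter further splitting into the types $(B_n,\gra_1)$ and $(C_n,\gra_n)$.

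For (i), my plan is to proceed by induction on the measure $M(\calB) = \sum_{\grb\in\calB}\height(\grb)^2$. If $\calB$ is already an antichain, set $\calA=\calB$; otherwise, pick $\grb_1<_0\grb_2$ in $\calB$ and perform a local \emph{exchange}: replace $\{\grb_1,\grb_2\}$ by a pair $\{\grb_1',\grb_2'\}\subset\{\grb_1,\grb_2\}^{\geq_0}\cap\Phi_1^+$ of incomparable, strongly orthogonal roots with $\grb_1'+\grb_2'=\grb_1+\grb_2$, still orthogonal to $\calB\setminus\{\grb_1,\grb_2\}$. To build such a pair I would use that $\grb_2-\grb_1\in\mN\Phi_0^+$ together with the strong orthogonality of $\grb_1,\grb_2$ from Lemma~\ref{prop:orthogonality} to obtain a decomposition $\grb_2-\grb_1=\grg_1+\grg_2$ with $\grg_1,\grg_2\in\Phi_0^+$ strongly orthogonal (this holds for each of the simply laced Hermitian symmetric types: in type $A$ the splitting $\Phi_0=A_{m-1}\sqcup A_{n-m}$ forces the decomposition directly, and the other types can be verified similarly), and set $\grb_i' = \grb_1+\grg_i$. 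Then $\grb_i'\in\Phi_1^+$ by Corollary~\ref{cor:commutatori}; letting $a=\height(\grb_1)$, $b=\height(\grb_2)$ and $h=\height(\grg_1)\in(0,b-a)$, one computes $(a+h)^2+(b-h)^2 < a^2+b^2$, so $M(\calB)$ strictly decreases and the iteration terminates in an antichain.

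For (ii), in case $(B_n,\gra_1)$ direct inspection shows that the only nontrivial orthogonal subsets of type $(0,2)$ have the form $\{\gre_1-\gre_j,\gre_1+\gre_j\}$ for some $j\in\{2,\dots,n\}$, and then $\calA=\{\gre_1\}$ is the desired type $(1,0)$ antichain lying in $\calB^{\geq_0}$. For case $(C_n,\gra_n)$, the roots of $\Phi_1^+$ are arcs $(i,j)$ with $1\leq i\leq j\leq n$ (short if $i<j$, long if $i=j$); orthogonality equals disjointness of supports, and $(i,j)\leq_0 (i',j')$ iff $i'\leq i$ and $j'\leq j$. Given $\calB$ of type $(h,k)$, let $S\subset\{1,\dots,n\}$ be the set of indices used by $\calB$ (so $|S|=2h+k$) and write $S=\{s_1<\cdots<s_{|S|}\}$; define the nested pairing
\[
\calA = \bigl\{(s_i,\,s_{|S|+1-i}) : 1\leq i\leq\lceil|S|/2\rceil\bigr\}.
\]
Then $\calA$ has type $(h+\lfloor k/2\rfloor,\,k\bmod 2)$, and it is an antichain because its left endpoints are strictly increasing while its right endpoints are strictly decreasing. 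To show $\calA\vdash\calB$, for each element $(s_i,s_{|S|+1-i})\in\calA$ one exhibits a dominated element of $\calB$ by case analysis on how the index $s_{|S|+1-i}$ appears in $\calB$ (as a loop index, as the left endpoint of an edge, or as the right endpoint of an edge).

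\textbf{Main obstacle.} In case (i) the crux is simultaneously constructing the orthogonal decomposition $\grb_2-\grb_1=\grg_1+\grg_2$ and verifying that orthogonality of $\grb_1',\grb_2'$ with each $\gre\in\calB\setminus\{\grb_1,\grb_2\}$ is preserved; both require a careful root-system analysis using that $\gop_\mru^+$ is abelian and the simply laced hypothesis, and may demand a type-by-type check for $D_n$, $E_6$, $E_7$. In case $(C_n,\gra_n)$ the delicate point in verifying $\calA\vdash\calB$ is the configuration where $s_{|S|+1-i}$ is the right endpoint of a $\calB$-edge whose left endpoint is smaller than $s_i$: here the immediate candidate for the dominated element fails, and one must locate another $\calB$-element using indices in the range $[s_i,\,s_{|S|+1-i}]$ to supply the required domination.
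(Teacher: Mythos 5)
Your plan for part (i) is essentially the paper's own argument (Lemma \ref{lemma:antichain} together with Proposition \ref{prop:anticatene-simply}): a local exchange move inside the $W_0$-orbit of $\calB$, iterated until an antichain is reached, with termination controlled by a strictly decreasing measure (you use $\sum_{\grb\in\calB}\height(\grb)^2$, the paper uses $\dim\aa_\calB$; either works). The step you single out as the main obstacle is exactly what the paper settles uniformly, so no type-by-type check is needed: by Proposition \ref{prop:dominance-roots}, $\grb_2-\grb_1$ is a sum of pairwise orthogonal positive roots, each necessarily in $\Phi_0^+$ since the $\gra_q$-coefficient of $\grb_2-\grb_1$ is zero; and because $\grb_1\perp\grb_2$ and all roots have the same length, $\Vert\grb_2-\grb_1\Vert^2=2\Vert\grb_1\Vert^2$ forces the decomposition to have exactly two terms. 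The orthogonality of $\grg_1,\grg_2$ with every $\gre\in\calB\setminus\{\grb_1,\grb_2\}$ then follows from Lemma \ref{lem:adding-roots} applied to $\aa=\gop^+_\mru$, exactly as in the paper: $\grb_1+\grg_i\in\Phi_1^+$ gives $\gre+\grg_i\notin\Phi$, hence $(\gre,\grg_i)\ge 0$, while $\grb_2-\grg_i\in\Phi_1^+$ gives $\gre-\grg_i\notin\Phi$, hence $(\gre,\grg_i)\le 0$. One incidental assertion in your plan is false: the new pair $\{\grb_1+\grg_1,\grb_1+\grg_2\}$ need not be incomparable; for $(D_n,\gra_1)$, replacing $\{\gre_1-\gre_j,\gre_1+\gre_j\}$ by $\{\gre_1-\gre_k,\gre_1+\gre_k\}$ with $k<n$ again gives a comparable pair. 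This is harmless, since your induction only needs the measure to drop, but the word ``incomparable'' should be deleted from the description of the exchange.

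For part (ii), the $(B_n,\gra_1)$ case matches the paper's (trivial) Proposition \ref{prop:anticatene-tipoB}. For $(C_n,\gra_n)$ your route is genuinely different: the paper repeats the reflection/exchange scheme (Lemmas \ref{lemma:dominance-radiciC} and \ref{lemma:minimal-card}), whereas you produce the antichain in one stroke by the nested pairing of the index set $S$ of $\calB$; the type count and the antichain property are correct as you state them, and this is arguably more explicit than the paper's iteration, at the price of being tied to the coordinate model. The domination step you flag as delicate does hold, by a pigeonhole argument you should add: with $t=s_{|S|+1-i}$, if every element of $\calB$ meeting the top $i$ indices $\{s_{|S|+1-i},\dots,s_{|S|}\}$ had its smaller endpoint in $\{s_1,\dots,s_{i-1}\}$, then (supports being disjoint) these would be $i$ distinct elements with $i$ distinct smaller endpoints inside a set of cardinality $i-1$, a contradiction; hence some $\gre_a+\gre_b\in\calB$ has $a\ge s_i$ and $b\ge t$, so $\gre_a+\gre_b\le_0\gre_{s_i}+\gre_{t}$, which is what $\calA\vdash\calB$ requires. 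In summary: the proposal is sound, but to be a proof you must either quote Proposition \ref{prop:dominance-roots} and Lemma \ref{lem:adding-roots} (or prove their content) in part (i) and write out the counting argument above in part (ii), since as written these are exactly the points left open.
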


\begin{proof}
The claim follows combining Proposition \ref{prop:anticatene-simply}, Proposition \ref{prop:anticatene-tipoB}, Proposition \ref{prop:anticatene-tipoC}.
\end{proof}

%Let $G$ be the simply connected complex algebraic group with Lie algebra $\gog$, and 
Let $P \subset G$ be the parabolic subgroup corresponding to $\gop^+$. It is well known that $G/P$ is an irreducible simply connected Hermitian symmetric space of compact type, whose corresponding involution of $G$ is $\grs$, and every such a symmetric space arises in this way (see e.g. \cite[Section 5.5]{RRS}). Therefore we will refer to the pair $(\Pi,\a_q)$ as a \textit{Hermitian pair}, and we will say that $\grs$ is an \textit{involution of Hermitian type}, or simply a \textit{Hermitian involution}. Correspondingly, we get also a symmetric variety $G/G_0$, where $G_0 = G^\s$ is the Levi factor of $P$.

Since $\gop_\mru^+ \subset \gog_1$ is a $B_0$-stable abelian subalgebra of $\gog$, by Theorem \ref{teo:B0-orbite} it possesses finitely many $B_0$-orbits, which are classified by $\Ort(\Phi_1^+)$. In this situation, the description of the $B_0$-orbits already follows by \cite{Pa5}: since $\gop_\mru^+$ is abelian, the unipotent radical $P_\mru$ acts trivially on its Lie algebra $\gop_\mru^+$, therefore every $B$-orbit is actually a $B_0$-orbit. The $G_0$-orbits in $\gop_\mru^+$ were studied by Muller, Rubenthaler, and Schiffmann \cite{MRS} and by Richardson, R\"ohrle and Steinberg \cite{RRS}. In the latter reference  it is shown that they are parametrized by the $W_0$-orbits in $\Ort(\Phi^+_{1,\ell})$, where $\Phi^+_{1,\ell} \subset \Phi^+_1$ denotes the subset of the long roots.

Let $\calS_{\Pi,\gra_q} \subset \Phi_1^+$ be the orthogonal subset corresponding to the open $B_0$-orbit of $\gop_\mru^+$, constructed recursively as in Proposition \ref{prop:varia-panyushev}. In this case $\calS_{\Pi,\gra_q}$ is well known, and it coincides with the set of Harish-Chandra strongly orthogonal roots (see \cite{HC}, \cite{M}). Denote by $r = |\calS_{\Pi,\gra_q}|$ the rank of the symmetric variety $G/G_0$. By \cite[Theorem 2]{M} we have $\calS_{\Pi,\gra_q} \subset \Phi^+_{1,\ell}$, in particular $\calS_{\Pi,\gra_q}$ is a maximal orthogonal subset of $\Phi^+_1$ consisting of long roots, and by \cite[Theorem 2.12 and Proposition 2.13]{MRS} it follows that $\calS_{\Pi,\gra_q}$ is an orthogonal subset of maximal cardinality in $\Phi_1^+$. In particular, $\calS_{\Pi,\gra_q} \in \Ort_{\max}(\Phi_1^+)$, and $|\calB| \leq r$ for all $\calB \in \Ort(\Phi_1^+)$.

We report in Table \ref{table:hermitian-pairs} the classification of the Hermitian pairs, together with the rank of the corresponding symmetric varieties $G/G_0$ (where $\Pi = \{\gra_1, \ldots, \gra_n\}$ is enumerated as in \cite{Bou}).
\begin{table}[h]
\begin{center}
\begin{tabular}{|l|c|}
\hline
$\qquad \quad (\Pi,\gra_q)$ & $\rk (G/G_0)$ \\
\hline
\hline
$(A_n,\gra_q)$ ($1 \leq q \leq n$) & $\min\{q,n+1-q\}$\\
$(B_n,\gra_1)$ & 2 \\
$(C_n,\gra_n)$ & $n$ \\
$(D_n,\gra_1)$ & 2 \\
$(D_n,\gra_{n-1})$, $(D_n,\gra_n)$ & $\lfloor \frac{n}{2}\rfloor$ \\
$(E_6,\gra_1)$, $(E_6,\gra_6)$ & 2 \\
$(E_7,\gra_7)$ & 3 \\
\hline
\end{tabular}
\end{center} \caption{Hermitian pairs and ranks of the corresponding symmetric varieties.}\label{table:hermitian-pairs} 
\end{table}

\begin{remark}	\label{oss:max-cardinality}
By \cite[Proposition 2.8 and Remark]{RRS}, the Weyl group $W_0$ acts transitively on $\Ort_{(h,k)}(\Phi_1^+)$ for all $h,k$. In particular, we see that if $\Phi$ is simply laced then $\Ort_{\max}(\Phi_1^+) = \Ort_r(\Phi_1^+)$ coincides with the collection of the orthogonal subsets of maximal cardinality. On the other hand in the non-simply laced cases, corresponding to the Hermitian pairs $(B_n,\gra_1)$ and $(C_n,\gra_n)$, we will easily see that if $\calB$ is an orthogonal subset of type $(h,k)$, then $\calB \in \Ort_{\max}(\Phi_1^+)$ if and only if $2h+k = r$. In particular, it follows that, if $\calB \in \Ort(\Phi_1^+)$ has maximal cardinality, then every root in $\calB$ is long. Hence the orthogonal subsets of maximal cardinality coincide with the elements of $\Ort_{(0,r)}(\Phi_1^+)$, and $W_0$ acts transitively on these subsets. As well, it follows that, by choosing properly the Borel subgroup $B_0 \subset G_0$, every subset of orthogonal roots of maximal cardinality can be made into a set of strongly orthogonal Harish-Chandra roots for $\Phi_1^+$.
\end{remark}

\subsection{The simply laced case.}
We start by recording a well known fact that holds for any root system. See e.g. \cite[Lemma 3.2]{Som}. 
\begin{lemma}	\label{lemma:dominanza}
Let $\grb, \grb' \in \Phi^+$ and suppose that $\grb'-\grb $ is a sum of positive roots. Then there are $\grg_1, \ldots, \grg_m \in \Phi^+$ such that $\grb' - \grb = \grg_1 + \ldots + \grg_m$ and $\grb + \grg_1 + \ldots + \grg_i \in \Phi^+$ for all $i \leq m$.
\end{lemma}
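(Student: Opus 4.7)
The plan is to induct on the height of $\eta := \beta' - \beta$, which by hypothesis lies in $\mZ_{\geq 0}\Pi$. The base case $\eta = 0$ is trivial (take $m = 0$). For the inductive step, the key point is the following claim: there always exists a simple root $\alpha \in \supp(\eta)$ such that either $\beta + \alpha \in \Phi^+$, or $\beta' - \alpha \in \Phi^+$. Once this is in hand, in the first case one reduces to the pair $(\beta + \alpha, \beta')$ and prepends $\alpha$ to the resulting sequence of positive roots; in the second case one reduces to $(\beta, \beta' - \alpha)$ and appends $\alpha$.

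The proof of the claim splits into two sub-cases according to the sign of the pairings $(\beta, \alpha)$ for $\alpha \in \supp(\eta)$. If some such $\alpha$ satisfies $(\beta, \alpha) < 0$, then the standard $\alpha$-string lemma forces $\beta + \alpha \in \Phi$, and positivity is automatic since $\beta \in \Phi^+$ and $\alpha \in \Pi$. Otherwise $(\beta, \alpha) \geq 0$ for every $\alpha \in \supp(\eta)$, hence $(\beta, \eta) \geq 0$, and since $\eta \neq 0$ gives $(\eta, \eta) > 0$ we obtain
\[
(\beta', \eta) \;=\; (\beta, \eta) + (\eta, \eta) \;>\; 0.
\]
Picking $\alpha \in \supp(\eta)$ with $(\beta', \alpha) > 0$, the string lemma yields $\beta' - \alpha \in \Phi$, and $\beta' - \alpha = \beta + (\eta - \alpha) \in \Phi^+$ since $\eta - \alpha \in \mZ_{\geq 0}\Pi$.

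The only bookkeeping point is to rule out $\beta' = \alpha$ in the second case, which would obstruct the string-lemma conclusion; but this is immediate, since if $\beta'$ were simple then $\height(\eta) = 1 - \height(\beta) \leq 0$, contradicting $\eta \neq 0$. I do not foresee any substantive obstacle: the statement is elementary and relies only on the standard string-lemma combinatorics of root systems together with the positive-definiteness of $(\cdot,\cdot)$ on the rational span of $\Pi$. As a byproduct, the construction produces each $\gamma_i$ as a simple root, which is slightly stronger than what is asked.
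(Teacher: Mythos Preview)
Your argument is correct. The paper itself does not prove this lemma but merely records it as a well-known fact with a citation to \cite[Lemma 3.2]{Som}, so there is no paper proof to compare against. Your induction on $\height(\beta'-\beta)$ is clean, and the dichotomy based on the sign of $(\beta,\alpha)$ for $\alpha\in\supp(\eta)$ handles all cases; the bookkeeping point ruling out $\beta'=\alpha$ is exactly what is needed. As you note, your construction in fact yields each $\gamma_i$ simple, which is slightly stronger than the stated conclusion.
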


%\begin{proof}
%We proceed by induction on $m$. Suppose that $\langle \grb, \grg_i^\vee \rangle \geq 0$ and $\langle \grb', \grg_i^\vee \rangle \leq 0$ for all $i \leq m$: then
%$$(\grb'-\grb, \grb'-\grb) = \sum_{i=1}^m (\grb'-\grb, \grg_i) \leq 0,$$
%hence $\grb = \grb'$ and $m = 0$. Suppose that $m>1$ and let $i \leq m$ be such that $\langle \grb, \grg_i^\vee \rangle < 0$ (resp. $\langle \grb', \grg_i^\vee \rangle > 0$): then $\grb + \grg_i \in \Phi^+$ and $\grb < \grb + \grg_i < \grb'$ (resp. $\grb' - \grg_i \in \Phi^+$ and $\grb < \grb' - \gra_i < \grb'$), and the claim follows by induction.
%\end{proof}
In the simply laced case, Lemma \ref{lemma:dominanza} can be improved as follows:
\begin{proposition}	\label{prop:dominance-roots}
Suppose that $\Phi$ is simply laced. Let $\grb, \grb' \in \Phi^+$ and suppose that $\grb'-\grb $ is a sum of positive roots. Then $\grb' - \grb$ is a sum of  positive pairwise orthogonal roots.
\end{proposition}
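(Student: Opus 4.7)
I would proceed by induction on $\height(\beta'-\beta)$, with the base cases $\beta=\beta'$ (empty decomposition) and $\beta'-\beta\in\Phi^+$ (single-root decomposition) being immediate.

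The core observation is that, since $\Phi$ is simply laced and $\beta,\beta'\in\Phi^+$, the squared length
$(\beta'-\beta,\beta'-\beta)=4-2(\beta,\beta')$
is controlled by $(\beta,\beta')\in\{-1,0,1\}$ whenever $\beta\neq\beta'$, so that any decomposition of $\beta'-\beta$ into pairwise orthogonal positive roots (each of squared length $2$) must consist of exactly $k=2-(\beta,\beta')\in\{1,2,3\}$ summands. In particular the case $(\beta,\beta')=1$ produces $\beta'-\beta\in\Phi^+$ and falls under the base cases.

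For the inductive step, Lemma~\ref{lemma:dominanza} supplies a chain $\beta=\beta_0,\beta_1,\ldots,\beta_m=\beta'$ in $\Phi^+$ with $\gamma_i=\beta_i-\beta_{i-1}\in\Phi^+$, and the simply laced hypothesis then forces $(\beta_{i-1},\gamma_i)=-1$ (in simply laced type, two positive roots sum to a root iff their inner product is $-1$). I would choose $\eta\in\Phi^+$ maximal, under the dominance order $\leq_0$, among those positive roots $\eta'$ satisfying $\eta'\leq\beta'-\beta$ and $\beta+\eta'\in\Phi^+$ (this set is nonempty by Lemma~\ref{lemma:dominanza}). Set $\beta'':=\beta+\eta\in\Phi^+$, so $\beta\leq\beta''<\beta'$ and $\height(\beta'-\beta'')<\height(\beta'-\beta)$. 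The inductive hypothesis applied to $\beta''\leq\beta'$ yields a decomposition
\[
    \beta'-\beta''=\mu_1+\cdots+\mu_{k-1}
\]
into pairwise orthogonal positive roots, so that $\beta'-\beta=\eta+\mu_1+\cdots+\mu_{k-1}$; it then suffices to verify $(\eta,\mu_j)=0$ for every $j$.

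The main obstacle is this final orthogonality check. By simple-lacedness, $(\eta,\mu_j)\in\{-1,0,1,2\}$, the value $2$ occurring only if $\eta=\mu_j$. If $(\eta,\mu_j)=-1$ then $\eta+\mu_j\in\Phi^+$ strictly dominates $\eta$ and still satisfies $\eta+\mu_j\leq\beta'-\beta$, so the maximality of $\eta$ forces $\beta+\eta+\mu_j\notin\Phi^+$; this is equivalent to $(\beta,\mu_j)\neq 0$. The subcase $(\eta,\mu_j)=1$ is handled analogously via $\eta-\mu_j\in\Phi$, while $\eta=\mu_j$ is ruled out by observing that $\beta+2\eta$ is not a root (since $(\beta+\eta,\eta)=(\beta,\eta)+2=1$). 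The delicate point is to exclude the residual constraints $(\beta,\mu_j)\neq 0$: I expect that closing the argument will require strengthening the inductive statement so that the orthogonal decomposition of $\beta'-\beta''$ can be produced compatibly with $\beta$ (for instance, arranging the $\mu_j$ so that at most one of them is non-orthogonal to $\beta$, in accordance with the squared-length identity $(\beta'-\beta,\beta'-\beta)=4-2(\beta,\beta')$ and Corollary~\ref{cor:commutatori}). Setting up this strengthening and tracking the interplay between $\eta$, $\beta$, and the $\mu_j$ is the main technical difficulty I anticipate.
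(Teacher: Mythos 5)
Your plan has a genuine gap, and it is exactly the one you flag yourself: the orthogonality of $\eta$ with the roots $\mu_1,\ldots,\mu_{k-1}$ coming from the inductive hypothesis is never established. The maximality of $\eta$ only gives you that $\beta+\eta+\mu_j=\beta''+\mu_j\notin\Phi^+$ whenever $(\eta,\mu_j)<0$, i.e.\ it forces $(\beta,\mu_j)>0$; since the inductive hypothesis produces the decomposition of $\beta'-\beta''$ with no reference whatsoever to $\beta$, nothing rules this configuration out, and the argument stalls. (The exclusion of $\eta=\mu_j$ is also unsubstantiated: $\eta=\mu_j$ would give $\beta'-\beta\geq 2\eta$ in the dominance order, but that does not make $\beta+2\eta$ a root, so the observation that $\beta+2\eta\notin\Phi$ proves nothing.) The ``strengthened induction'' you say would be needed — producing the $\mu_j$ compatibly with $\beta$, e.g.\ so that the partial sums $\beta''+\mu_1+\cdots+\mu_i$ stay in $\Phi^+$ — is precisely the mathematical content of the proposition, and it is left unformulated and unproved; as written, the proposal is a plan rather than a proof.

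For comparison, the paper closes this loop by organizing the induction differently: starting from Lemma \ref{lemma:dominanza} it takes a decomposition $\beta'-\beta=\gamma_1+\cdots+\gamma_m$ whose partial sums $\beta+\gamma_1+\cdots+\gamma_i$ are all roots, with $m$ \emph{minimal}, and shows this decomposition is automatically orthogonal: if some $\gamma_{j_0},\gamma_{i_0}$ are non-orthogonal one may assume $\langle\gamma_{j_0},\gamma_{i_0}^\vee\rangle=-1$, merge them into the single root $\gamma_{j_0}+\gamma_{i_0}$, and a short computation ($\langle\beta,\gamma_i^\vee\rangle=-1$ for $i<i_0$, whence $\langle\beta_i,\gamma_{j_0}^\vee\rangle=1$ for $j_0<i<i_0$) shows the partial-sum property survives, contradicting minimality of $m$. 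In other words, the compatibility with $\beta$ that your approach lacks is built in from the start by minimizing the number of summands in a chain, rather than maximizing the first step. If you want to salvage your greedy scheme you would have to prove a statement of that strength; otherwise the orthogonality check cannot be completed.
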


\begin{proof}
By Lemma \ref{lemma:dominanza}, there are $\grg_1,\ldots,\grg_m \in \Phi^+$ such that $\grb' -\grb = \grg_1 + \ldots + \grg_m$ and $\grb + \grg_1 + \ldots + \grg_i$ is a positive root for all $i\leq m$. Let $m$ be minimal with the previous property, fix $\grg_1,\ldots,\grg_m \in \Phi^+$ as above and, if $0 \leq i \leq m$, denote $\grb_i = \grb + \grg_1 + \ldots + \grg_i$. We claim that $\grg_1, \ldots, \grg_m$ are pairwise orthogonal.

If $m=1$ there is nothing to prove. Assume $m>1$, and suppose that $\grg_1, \ldots, \grg_m$ are not orthogonal. Let $i_0 \leq m$ be the minimum such that $(\grg_i,\grg_j)=0$ for all $i,j< i_0$ with $i \neq j$, and let $j_0< i_0$ be such that $(\grg_{j_0},\grg_{i_0}) \neq 0$. Since $\langle \beta,\grg_{i_0}^\vee \rangle \geq -1$ and $\langle \grg_i,\grg_{i_0}^\vee \rangle \geq-1$, we can assume that $\langle \grg_{j_0},\grg_{i_0}^\vee \rangle =-1$, thus $\grg_{j_0}+\grg_{i_0}$ is a root. To reach a contradiction, we show that the $m-1$ positive roots
$$\grg_1, \ldots, \grg_{j_0-1}, \grg_{j_0+1}, \ldots, \grg_{i_0-1}, \grg_{j_0}+\grg_{i_0}, \grg_{i_0+1}, \ldots, \grg_m$$
also satisfy the assumptions of $\grg_1, \ldots, \grg_m$, contradicting the minimality of $m$. That is, we show that $\grb_i - \grg_{j_0} \in \Phi^+$ whenever $j_0 < i < i_0$.

Indeed, if $i < i_0$, then  $\beta_{i-1}+\grg_i = \grb_i \in \Phi^+$, therefore $\langle \beta_{i-1},\grg_i^\vee \rangle =-1$, and being $(\grg_i, \grg_j)=0$ for all $j < i$ it follows that $\langle \grb,\grg_i^\vee \rangle = -1$. Therefore, if $j_0 < i < i_0$, then it follows $\langle \grb_i, \grg_{j_0}^\vee \rangle = \langle \grb, \grg_{j_0}^\vee \rangle + \langle \grg_{j_0}, \grg_{j_0}^\vee \rangle = 1$, and the claim follows.
\end{proof}

\begin{lemma}	\label{lemma:antichain}
Suppose that $\Phi$ is simply laced. Let $\calB \in \Ort(\Phi_1^+)$ and suppose that it is not an antichain, then there exists $\calB' \in \Ort(\Phi_1^+)$ with $|\calB'| = |\calB|$ such that $\calB' \vdash \calB$, and $\dim \mathfrak a_{\calB'} < \dim \mathfrak a_\calB$.
\end{lemma}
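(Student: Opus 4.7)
The plan is to pick a comparable pair $\grb <_0 \grb'$ in $\calB$ and construct $\calB'$ by swapping $\{\grb, \grb'\}$ for $\{\grb + \grg_1, \grb + \grg_2\}$, where $\grg_1, \grg_2$ come from the canonical orthogonal decomposition of $\grb' - \grb$ furnished by Proposition~\ref{prop:dominance-roots}.

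First I would choose $\grb \in \calB$ minimal (for $\leq_0$) among elements of $\calB$ lying strictly below some other element of $\calB$, and then $\grb' \in \calB$ minimal with $\grb <_0 \grb'$; then nothing in $\calB$ is strictly below $\grb$ nor strictly between $\grb$ and $\grb'$. Applying Proposition~\ref{prop:dominance-roots} yields $\grb' - \grb = \grg_1 + \cdots + \grg_m$ with the $\grg_j$ pairwise orthogonal positive roots and each partial sum $\grb + \grg_1 + \cdots + \grg_j$ again a positive root. Since all roots in a simply laced system have squared norm $2$, taking squared norms inductively forces $(\grb, \grg_j) = -1$ for every $j$; then $\grb \perp \grb'$ gives $0 = (\grb, \grb') = 2 - m$, so $m = 2$, and $[\grb' - \grb : \gra_q] = 0$ puts both $\grg_i$ in $\Phi_0^+$.

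Setting $\grb_1 = \grb + \grg_1$ and $\grb_2 = \grb + \grg_2 \in \Phi_1^+$, a direct computation gives $(\grb_1, \grb_2) = 2 - 1 - 1 + 0 = 0$. I propose
$$\calB' = (\calB \setminus \{\grb, \grb'\}) \cup \{\grb_1, \grb_2\}.$$
Then $|\calB'| = |\calB|$: an equality $\grb_i = \grg \in \calB$ would place $\grg$ strictly between $\grb$ and $\grb'$, contrary to the minimality of the pair. The relation $\calB' \vdash \calB$ is clear from $\grb_i \geq_0 \grb$. Finally $\grb \in \calB^{\geq_0} \setminus \calB'^{\geq_0}$ by the minimality of $\grb$ in $\calB$, which gives the strict inequality $\dim \mathfrak a_{\calB'} < \dim \mathfrak a_\calB$.

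The main obstacle is orthogonality of $\calB'$: for $\grg \in \calB \setminus \{\grb, \grb'\}$ one needs $(\grg_i, \grg) = 0$. From $(\grg, \grb' - \grb) = 0$ the two inner products $(\grg, \grg_i)$ are opposite elements of $\{-1, 0, 1\}$, so the only dangerous case is $(\grg, \grg_1) = \pm 1$. I would rule this out using the identity $|\grg - \grg_1 - \grb|^2 = 2$ together with $[\grg - \grg_1 - \grb : \gra_q] = 0$: the vector is either $0$ (forcing $\grg = \grb_1$, which is excluded as $\grb_1 \notin \calB$) or a single root in $\Phi_0$, and a short case analysis combining $\grg \perp \grb$ with the minimality of $(\grb, \grb')$ should exclude the remaining possibility. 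If any bad $\grg$ did survive, the remedy would be to further replace it by $s_{\grg_i}(\grg) = \grg + \grg_i$ for the sign with $(\grg, \grg_i) = -1$; verifying that these compound substitutions stay orthogonal and produce no collisions of the form $\grg + \grg_2 = \grg' + \grg_1$ constitutes the principal technical point.
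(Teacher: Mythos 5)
Your construction is the right one --- in fact it coincides with the paper's: since $(\grb,\grg_1)=(\grb,\grg_2)=-1$ and $\langle\grb',\grg_1^\vee\rangle=1$, your pair $\{\grb+\grg_1,\grb+\grg_2\}=\{s_{\grg_1}(\grb),s_{\grg_1}(\grb')\}$, so your $\calB'$ is exactly $s_{\grg_1}(\calB)$ (the paper phrases everything through this single reflection, which also makes the cardinality and the orthogonality of the two new roots automatic). Your derivation that $m=2$ is a correct and pleasant observation, though not needed for the paper's argument, which only uses $\grg_1$.

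The genuine gap is the step you yourself flag as ``the principal technical point'': showing $(\grg,\grg_1)=0$ for every $\grg\in\calB\setminus\{\grb,\grb'\}$. Your norm computation only shows that in the bad case $\grg-\grg_1-\grb$ is a root of $\Phi_0$, and the ``short case analysis'' that is supposed to finish from there is never carried out; worse, your fallback (iterating substitutions $\grg\mapsto\grg+\grg_i$ and controlling collisions) signals that you are not certain the bad case is impossible, and that route would substantially complicate the induction. The bad case \emph{is} impossible, and the missing ingredient is Lemma~\ref{lem:adding-roots} applied to the abelian subalgebra $\gop^+_\mru$ (whose weight set is all of $\Phi_1^+$), used twice: since $\grb+\grg_1\in\Phi_1^+$ and $\grg\perp\grb$, the lemma gives $\grg+\grg_1\notin\Phi_1^+$, i.e.\ $(\grg,\grg_1)\ge 0$; since $\grb'-\grg_1\in\Phi_1^+$ and $\grg\perp\grb'$, it gives $\grg-\grg_1\notin\Phi_1^+$, i.e.\ $(\grg,\grg_1)\le 0$. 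Hence $(\grg,\grg_1)=0$, $s_{\grg_1}$ fixes $\calB\setminus\{\grb,\grb'\}$ pointwise, and the rest of your argument (the relation $\calB'\vdash\calB$ and the strict drop in $\dim\aa_{\calB'}$ from the minimality of $\grb$) goes through as you wrote it. With that one-line repair your proof is complete and is essentially the paper's.
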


\begin{proof}
Notice that $W_0$ acts on $\Ort(\Phi_1^+)$, we will find $\calB'$ in the $W_0$-orbit of $\calB$. Let $\grb \in \calB$ be a minimal element and let $\grb' \in \calB$ with $\grb < \grb'$. Write $\grb' - \grb = \grg_1 + \ldots + \grg_m$ for some pairwise orthogonal roots $\grg_1, \ldots, \grg_m \in \Phi^+$ as in Proposition \ref{prop:dominance-roots}, and notice that $\grg_i \in \Phi_0^+$ for all $i$. Set $\grg = \grg_1$. Then $\langle \grb' - \grb, \grg^\vee \rangle = 2$, and since $\Phi$ is simply laced it follows that $s_\grg(\grb) = \grb + \grg$ and $s_\grg(\grb') = \grb' - \grg$. On the other hand by Lemma~\ref{lem:adding-roots} $\grg$ is orthogonal to every root in $\calB \setminus \{\grb, \grb'\}$, therefore
$$
	s_\grg(\calB) = (\calB \setminus \{\grb, \grb'\}) \cup \{s_\grg(\grb), s_\grg(\grb')\}.
$$
Being $\grg < \grb' - \grb$, we have $\grb < s_\grg(\grb)$ and $\grb < s_\grg(\grb')$. Hence $s_\grg(\calB) \vdash \calB$, and since $\grb$ is minimal in $\calB$ we get $\grb \not \in \Psi(\mathfrak a_{\calB'})$.
\end{proof}

\begin{proposition}\label{prop:anticatene-simply}
Suppose that $\Phi$ is simply laced. Let $\calB \in \Ort(\Phi_1^+)$, then there is an antichain $\calA \in \Ort(\Phi_1^+)$ with $|\calA| = |\calB|$ such that $\calA \vdash \calB$.
\end{proposition}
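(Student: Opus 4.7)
The plan is to use Lemma \ref{lemma:antichain} as an induction step and iterate it, using the strict drop in $\dim \aa_\calB$ to guarantee termination.

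First I would observe that the relation $\vdash$ is transitive: if $\calB_1 \vdash \calB_2$ and $\calB_2 \vdash \calB_3$, then by definition $\aa_{\calB_1} \subset \aa_{\calB_2} \subset \aa_{\calB_3}$, so $\calB_1 \vdash \calB_3$. This transitivity is what allows us to compose iterated applications of Lemma \ref{lemma:antichain} into a single conclusion.

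Next I would argue by induction on $\dim \aa_\calB$. If $\calB$ is already an antichain, take $\calA = \calB$ and we are done. Otherwise, by Lemma \ref{lemma:antichain} there exists $\calB' \in \Ort(\Phi_1^+)$ with $|\calB'| = |\calB|$, $\calB' \vdash \calB$, and $\dim \aa_{\calB'} < \dim \aa_\calB$. By the induction hypothesis applied to $\calB'$, we obtain an antichain $\calA \in \Ort(\Phi_1^+)$ with $|\calA| = |\calB'| = |\calB|$ and $\calA \vdash \calB'$. Transitivity of $\vdash$ then yields $\calA \vdash \calB$, proving the claim.

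There is essentially no obstacle here: all the substantive work has already been done in Lemma \ref{lemma:antichain}, which produces the descent step via the reflection $s_\grg$ associated with a summand of $\grb' - \grb$ (available thanks to the simply laced hypothesis via Proposition \ref{prop:dominance-roots}). The only thing to check in the present proposition is that the descent terminates, and this is immediate since each step strictly decreases the nonnegative integer $\dim \aa_\calB$. One could equivalently phrase the argument non-inductively: start with $\calB_0 = \calB$ and, as long as $\calB_i$ is not an antichain, set $\calB_{i+1}$ to be the orthogonal subset produced by Lemma \ref{lemma:antichain} applied to $\calB_i$; the sequence $\dim \aa_{\calB_0} > \dim \aa_{\calB_1} > \cdots$ of nonnegative integers must stabilize, so some $\calB_N$ is an antichain, and taking $\calA = \calB_N$ completes the proof.
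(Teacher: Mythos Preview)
Your proof is correct and follows essentially the same approach as the paper: iterate Lemma~\ref{lemma:antichain}, using the strict decrease of $\dim\aa_\calB$ to force termination. The paper phrases this as an explicit descending chain $\calB \vdash \calB_1 \vdash \calB_2 \vdash \cdots$ rather than as an induction on $\dim\aa_\calB$, but the argument is the same.
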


\begin{proof}
Notice that $W_0$ acts on the orthogonal subsets of cardinality $m = |\calB|$. Suppose that $\calB$ is not an antichain, then by Lemma \ref{lemma:antichain} there is $\calB_1 \in \Ort_{(m,0)}(\Phi_1^+)$ such that $\calB_1 \vdash \calB$ and $\dim \mathfrak a_{\calB_1} < \dim \mathfrak a_\calB$. Let $i \geq 1$ and suppose that $\calB_1, \ldots, \calB_i \in \Ort_{(m,0)}(\Phi_1^+)$ are such that $\calB_i \vdash \ldots \vdash \calB_1 \vdash \calB$ and $\dim \mathfrak a_{\calB_i} < \dim \mathfrak a_{\calB_{i-1}} < \ldots < \dim \mathfrak a_\calB$. If $\calB_i$ is not an antichain, then we can apply Lemma \ref{lemma:antichain} again, and we find $\calB_{i+1} \in \Ort_{(m,0)}(\Phi_1^+)$ such that $\calB_{i+1} \vdash \calB_i$ and $\dim \mathfrak a_{\calB_{i+1}} < \dim \mathfrak a_{\calB_i}$. Since $\calB_{i+1}$ is not empty it must be $\dim \mathfrak a_{\calB_{i+1}} > 0$, therefore the process must stop for some $k$, and $\calB_k$ is an antichain.
\end{proof}

\subsection{The odd orthogonal case.} Consider the Hermitian pair $(B_n,\gra_1)$. We enumerate the set of simple roots $\Pi = \{\gra_1, \ldots, \gra_n\}$ as in \cite{Bou}. Given $i,j$ such that $1 \leq i \leq n$ and $1 \leq j < n$ we set
\begin{eqnarray*}
&\grb_i = \gra_1 + \ldots + \gra_i,\\
&\grb'_j = \gra_1 + \ldots + \gra_j + 2 \gra_{j+1} + \ldots + 2\gra_n.
\end{eqnarray*}
Then $\Phi^+_1 = \{\grb_i \st 1 \leq i \leq n\} \cup \{\grb'_j \st 1 \leq j < n\}$.
Notice that $\Phi^+_1$ contains a unique short root, namely $\grb_n$.

In this case $\Ort_{\max}(\Phi_1^+) = \{\calB_1, \ldots, \calB_n\}$, where we set  $\calB_i = \{\grb_i, \grb'_i\}$ for all $i < n$, and $\calB_n = \{\grb_n\}$. In particular, the only  possible types for an orthogonal subset are $(0,2)$, $(1,0)$ and $(0,1)$. Moreover $\calB_n \vdash \calB_{n-1} \vdash \ldots \vdash \calB_1$, and $\calB_n$ is the unique antichain in $\Ort_{\max}(\Phi_1^+)$. In particular, the following proposition trivially holds.

\begin{proposition}\label{prop:anticatene-tipoB}
Consider the Hermitian pair $(B_n,\gra_1)$, and let $\Phi^+ = \Phi_0^+ \cup \Phi_1^+$ be the corresponding decomposition.
\begin{itemize}
	\item[i)] Let $\calB \in \Ort(\Phi_1^+)$ of type $(h,k)$, then there is an antichain $\calA \subset \Phi_1^+$ of type $(h+\lfloor \frac{k}{2} \rfloor ,k-2\lfloor \frac{k}{2} \rfloor)$ such that $\calA \vdash \calB$.
\item[ii)] There exists a unique antichain $\calA_* \in \Ort_{\max}(\Phi_1^+)$, and $\calA_* \vdash \calB$ for all $\calB \in \Ort_{\max}(\Phi_1^+)$.
\end{itemize}
\end{proposition}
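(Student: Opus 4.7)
The plan is to exploit the very explicit description of $\Phi_1^+$ for the pair $(B_n,\gra_1)$ already recorded in the excerpt, and show that after unpacking the combinatorics everything collapses to a single simple observation: the poset $(\Phi_1^+,\leq_0)$ is a total chain, so the antichains of $\Phi_1^+$ are exactly its singletons.

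First I would realize $\Phi$ in the standard Euclidean model, with $\gra_k=e_k-e_{k+1}$ for $k<n$ and $\gra_n=e_n$. A direct computation then rewrites the elements of $\Phi_1^+$ as
\[
\grb_i=e_1-e_{i+1}\ (1\le i<n),\quad \grb_n=e_1,\quad \grb'_j=e_1+e_{j+1}\ (1\le j<n).
\]
Using this, I would check that all consecutive differences
\[
\grb_{i+1}-\grb_i,\quad \grb'_{n-1}-\grb_n,\quad \grb'_{j-1}-\grb'_j
\]
lie in $\Phi_0^+$ (the case $\grb'_{n-1}-\grb_{n-1}=2\gra_n$ is a sum of two positive roots), so that $(\Phi_1^+,\leq_0)$ is the total chain
\[
\grb_1<_0 \grb_2<_0 \cdots <_0\grb_n<_0\grb'_{n-1}<_0\cdots<_0 \grb'_1.
\]
In particular every antichain of $\Phi_1^+$ has cardinality $\leq 1$.

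The second ingredient is the classification of orthogonal subsets of $\Phi_1^+$: a short computation of scalar products in the Euclidean model shows that $\grb_i$ and $\grb_j'$ are orthogonal iff $i=j$, while no two distinct $\grb_i$'s (nor two distinct $\grb'_j$'s) are orthogonal, and $\grb_n=e_1$ is not orthogonal to anything else. Combined with the constraint $2h+k\leq r=2$ of Remark~\ref{oss:max-cardinality}, this yields the list of $\calB_i$'s already given, and in particular shows that the types occurring are exactly $(0,0),(0,1),(1,0),(0,2)$, with the type $(0,2)$ realized precisely by the sets $\calB_i=\{\grb_i,\grb'_i\}$ ($i<n$) and the type $(1,0)$ realized only by $\{\grb_n\}$.

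With these two ingredients I would prove (i) by a one-line case analysis on type: in types $(0,0),(0,1),(1,0)$ the set $\calB$ is itself already an antichain, of the required type $(h+\lfloor k/2\rfloor,k-2\lfloor k/2\rfloor)=(h,k)$; in the remaining type $(0,2)$, $\calB=\calB_i=\{\grb_i,\grb'_i\}$ for some $i<n$, and I would take $\calA=\{\grb_n\}$, which is an antichain of type $(1,0)$ and satisfies $\calA\vdash\calB$ because $\grb_n\geq_0\grb_i$. Part (ii) then follows immediately: among the $\calB_i$ with $i<n$ each contains the comparable pair $\grb_i<_0 \grb'_i$, so $\calB_n=\{\grb_n\}$ is the only antichain in $\Ort_{\max}(\Phi_1^+)$; the relation $\calB_n\vdash\calB_i$ for every $i\le n$ was already pointed out in the text just before the statement. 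I don't anticipate any real obstacle: once the total chain structure of $(\Phi_1^+,\leq_0)$ is observed, the statement is essentially a bookkeeping verification.
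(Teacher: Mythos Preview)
Your proposal is correct and follows exactly the line the paper takes: the paragraph preceding the proposition already records the explicit list $\Ort_{\max}(\Phi_1^+)=\{\calB_1,\ldots,\calB_n\}$, the chain $\calB_n\vdash\cdots\vdash\calB_1$, and that $\calB_n$ is the only antichain, after which the paper simply declares the proposition ``trivially holds''. You have merely supplied the routine verifications (via the Euclidean model) that justify these assertions, so there is no genuine difference in approach.
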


\subsection{The symplectic case.} Consider the Hermitian pair $(C_n,\gra_n)$. We enumerate the set of simple roots $\Pi = \{\gra_1, \ldots, \gra_n\}$ as in \cite{Bou}, and we embed $\Phi$ into the euclidean vector space $\mathbb R^n$ with orthonormal basis $\gre_1, \ldots, \gre_n$ by setting $\gra_i = \gre_i - \gre_{i+1}$ for all $i < n$ and $\gra_n = 2 \gre_n$. Then
$$
	\Phi = \{\pm(\gre_i \pm \gre_j) \st 1 \leq i, j \leq n\} \setminus \{0\},
$$
and $\Phi^+_1 = \{\gre_i + \gre_j \st 1 \leq i \leq j \leq n\}$. Notice that, for $1 \leq i \leq j \leq n$, we have
$$
	\gre_i + \gre_j = \gra_i + \ldots + \gra_{j-1} + 2\gra_j + \ldots + 2\gra_{n-1} + \gra_n.
$$
In particular $\gre_i + \gre_j \leq \gre_h + \gre_k$ if and only if $h \leq i$ and $k \leq j$. Notice that $\calS_{C_n,\gra_1} = \{2\gre_1, \ldots, 2\gre_n\}$, so that $r=n$.

Let $\calB \in \Ort(\Phi_1^+)$, and write $\calB = \{\gre_{i_1} + \gre_{j_1}, \ldots, \gre_{i_m}+\gre_{j_m}\}$ for some indices $i_1 \leq j_1, \ldots, i_m \leq j_m$. Correspondingly, we have a disjoint union $\bigcup_{k=1}^m \{i_k, j_k\}$, and $\calB \in \Ort_{\max}(\Phi_1^+)$ if and only if $\{1,\ldots,n\} = \bigcup_{k=1}^m \{i_k, j_k\}$, and it immediately follows that, if $\calB$ has type $(h,k)$, then $\calB \in \Ort_{\max}(\Phi_1^+)$ if and only if $2h+k=n$. Notice moreover that $\calB$ is an antichain if and only if, up to some permutation of $\{1, \ldots, m\}$, we have
$$ i_1 < i_2 < \ldots < i_{m-1} < i_m \leq j_m < j_{m-1} < \ldots < j_2 < j_1.$$
It follows that there is a unique $\calB \in \Ort_{\max}(\Phi_1^+)$ which satisfies the previous inequalities, therefore there is a unique antichain $\calA_* \in \Ort_{\max}(\Phi_1^+)$.
%\footnote{Ho aggiunto questo paragrafo per caratterizzare ortogonali massimali e osservare l'unicit\`a dell'anticatena.}

The following lemma is an easy consequence of previous description of $\Phi_1^+$.

\begin{lemma} \label{lemma:dominance-radiciC}
Let $\grb, \grb' \in \Phi_1^+$ be orthogonal roots.
\begin{itemize}
	\item[i)] Suppose that $\{\grb, \grb'\}$ is of type $(1,1)$ and suppose that $\grb < \grb'$. Then $\grb' - \grb = 2\gra + \gra'$ for some short roots $\gra, \gra' \in \Phi_0^+$ with $\langle \gra', \gra^\vee \rangle = -1$.
	\item[ii)] Suppose that $\{\grb, \grb'\}$ is of type $(2,0)$, and suppose that $\grb < \grb'$. Then $\grb' - \grb = \gra + \gra'$ for some orthogonal short roots $\gra, \gra' \in \Phi_0^+$.
	\item[iii)] Suppose that $\{\grb, \grb'\}$ is of type $(0,2)$, then $\grb - \grb' = 2\gra$ for some short root $\gra \in \Phi_0$.
\end{itemize}
\end{lemma}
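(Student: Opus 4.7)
The proof will be a direct case-by-case computation exploiting the explicit description of $\Phi_1^+$ in the symplectic case. Recall that $\Phi_0^+ = \{\varepsilon_i - \varepsilon_j : 1 \le i < j \le n\}$ consists entirely of short roots, while $\Phi_1^+$ consists of the short roots $\varepsilon_i + \varepsilon_j$ (with $i<j$) together with the long roots $2\varepsilon_i$. Moreover, for indices $i \le j$ and $h \le k$, orthogonality of $\varepsilon_i+\varepsilon_j$ and $\varepsilon_h+\varepsilon_k$ amounts to $\{i,j\}\cap\{h,k\}=\vuoto$, and the dominance $\varepsilon_i+\varepsilon_j \le \varepsilon_h+\varepsilon_k$ is equivalent to $h\le i$ and $k\le j$.

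For part (i), I would split according to which root is long. If $\grb = 2\gre_j$ is long and $\grb' = \gre_h+\gre_k$ is short with $h<k$, then $\grb<\grb'$ together with orthogonality forces $h<k<j$, and one computes directly that
\[
\grb' - \grb = 2(\gre_k - \gre_j) + (\gre_h - \gre_k),
\]
so I set $\gra = \gre_k-\gre_j$ and $\gra' = \gre_h-\gre_k$; both lie in $\Phi_0^+$ and are short, and $\langle \gra',\gra^\vee\rangle = -1$. The symmetric subcase ($\grb$ short and $\grb'$ long) is analogous: $\grb=\gre_i+\gre_j$ with $i<j$ and $\grb'=2\gre_h$ force $h<i<j$, and
\[
\grb' - \grb = 2(\gre_h - \gre_i) + (\gre_i - \gre_j)
\]
does the job.

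For part (ii), write $\grb=\gre_i+\gre_j$ and $\grb'=\gre_h+\gre_k$ with $i<j$, $h<k$ and $\{i,j\}\cap\{h,k\}=\vuoto$. The condition $\grb<\grb'$ gives $h<i$ and $k<j$, and I would simply compute
\[
\grb' - \grb = (\gre_h - \gre_i) + (\gre_k - \gre_j),
\]
setting $\gra=\gre_h-\gre_i$ and $\gra'=\gre_k-\gre_j$. These are short roots in $\Phi_0^+$, and since the four indices $h,i,k,j$ are pairwise distinct, $(\gra,\gra')=0$. Part (iii) is immediate: orthogonal long roots are $\grb=2\gre_i$ and $\grb'=2\gre_j$ with $i\ne j$, so $\grb - \grb' = 2(\gre_i-\gre_j)$, and $\gra = \gre_i-\gre_j$ lies in $\Phi_0$.

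There is no serious obstacle here since everything reduces to elementary arithmetic in the standard realization. The only point that requires care is extracting the correct index inequalities from the combination of orthogonality and dominance in case (i); once these are pinned down, the decompositions are forced.
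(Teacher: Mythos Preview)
Your proof is correct and follows essentially the same approach as the paper: a direct computation in the $\varepsilon$-coordinates of the symplectic root system. The only cosmetic difference is that in part (i) the paper writes a single unified formula $\gra=\gre_k-\gre_i$, $\gra'=\gre_h-\gre_k+\gre_i-\gre_j$ (with the convention $\grb=\gre_i+\gre_j$, $\grb'=\gre_h+\gre_k$, $i\le j$, $h\le k$) that specializes to your two subcases, whereas you treat the two subcases separately.
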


\begin{proof}
Assume $\grb = \gre_i + \gre_j$ and $\grb' = \gre_h + \gre_k$, for some $i \leq j$ and $h \leq k$. The orthogonality implies that $i \neq h$ and $j \neq k$.

i) We have in this case $h \leq k < i \leq j$, and since $\{\grb, \grb'\}$ contains exactly one long root,  either $h = k < i < j$ or $h < k < i=j$. Therefore the claim follows by setting $\gra = \gre_k - \gre_i$ and $\gra' = \gre_h - \gre_k + \gre_i - \gre_j$.

ii) We have in this case $i < j$ and $h < k$, and the claim follows by setting $\gra = \gre_i - \gre_j = \gra_k + \ldots + \gra_{j-1}$ and $\gra' = \gre_h - \gre_k = \gra_h + \ldots + \gra_{k-1}$.

iii) We have in this case $i = j$ and $h = k$, and the claim follows by setting $\gra = \gre_h - \gre_i$.
\end{proof}

\begin{lemma}	\label{lemma:minimal-card}
Let $\calB \subset \Phi_1^+$ be an orthogonal subset of type $(h,k)$, set $k' = \lfloor \frac{k}{2} \rfloor$ and suppose that $\calB$ is not an antichain. Then there exists an orthogonal subset $\calB'$ of type $(h+k', k-2k')$ such that $\calB' \vdash \calB$, and $\dim \aa_{\calB'} < \dim \aa_\calB$.
\end{lemma}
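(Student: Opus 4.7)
The plan is to mimic the proof of Lemma \ref{lemma:antichain} from the simply laced case, but using the $C_n$-specific decompositions provided by Lemma \ref{lemma:dominance-radiciC}, combined with a long-root pairing step that exploits the symplectic structure of $\Phi_1^+$. The argument splits according to whether $\calB$ has two or more long roots.

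If $\calB$ contains a comparable pair $\grb<\grb'$ of type $(2,0)$ or $(1,1)$, I would apply a suitable reflection $s_{\gra''}$, with $\gra''$ built from the decomposition $\grb'-\grb=2\gra+\gra'$ or $\gra+\gra'$ provided by Lemma \ref{lemma:dominance-radiciC}: exactly as in Lemma \ref{lemma:antichain}, this reflection moves $\grb$ and $\grb'$ strictly upwards in the dominance order (into $\calB^{\ge_0}$), fixes the remaining roots of $\calB$ thanks to Lemma \ref{lem:adding-roots}, and (choosing $\grb$ minimal in $\calB$) removes $\grb$ from $\Psi(\aa_{s_{\gra''}(\calB)})$, giving a strict dimension drop without altering the type. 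Iterating produces a subset $\calB^*$, still in the $W_0$-orbit of $\calB$ and hence of type $(h,k)$, with $\aa_{\calB^*}\subseteq\aa_\calB$ and such that all comparable pairs in $\calB^*$ are of type $(0,2)$. If $k\le 1$ this $\calB^*$ is automatically an antichain, and $\calB'=\calB^*$ satisfies the lemma with $k'=0$.

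If $k\ge 2$, the reflection step cannot alter the number of long roots, so I would next perform a \emph{long root pairing}. The assumption that all $(2,0)$ and $(1,1)$ pairs in $\calB^*$ are incomparable forces a ``straddling'' property: every short root $\gre_p+\gre_q\in\calB^*$ (with $p<q$) satisfies $p<j_1$ and $q>j_k$, where $j_1<\ldots<j_k$ are the long indices of $\calB^*$. I then define
\[
\calB'\;=\;(\text{short roots of }\calB^*)\;\cup\;\{\gre_{j_a}+\gre_{j_{k+1-a}}\st 1\le a\le k'\}\;\cup\;\{2\gre_{j_{k'+1}}:k\text{ odd}\}.
\]
The straddle property together with the \emph{extremal} pairing $(j_1,j_k),(j_2,j_{k-1}),\ldots$ makes $\calB'$ into an orthogonal antichain of type $(h+k',k-2k')$ with $\calB'\vdash\calB^*\vdash\calB$: each new short root $\gre_{j_a}+\gre_{j_{k+1-a}}$ dominates $2\gre_{j_{k+1-a}}\in\calB^*$. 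For the strict dimension drop I would exhibit $2\gre_{j_k}$ as a weight in $\Psi(\aa_{\calB^*})\subseteq\Psi(\aa_\calB)$ that lies in none of the upper sets of the elements of $\calB'$: existing short roots have second index $>j_k$, each new short root $\gre_{j_a}+\gre_{j_{k+1-a}}$ has $j_a<j_k$, and $j_{k'+1}<j_k$ in the odd case.

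The main obstacle is the \emph{choice} of pairing in the second step: the naive ``consecutive'' pairing $(j_1,j_2),(j_3,j_4),\ldots$ typically fails, because the resulting new short roots $\gre_{j_{2a-1}}+\gre_{j_{2a}}$ may be dominated by existing short roots of $\calB$, and then $\Psi(\aa_{\calB'})=\Psi(\aa_\calB)$ with no net decrease. Combining the preliminary reflection step (which forces the straddling behaviour of short roots relative to long indices) with the extremal pairing (which makes the new short roots incomparable both among themselves and with the existing short roots) is exactly what is needed to obtain a genuine strict dimension decrease.
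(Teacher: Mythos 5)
Your proposal is correct, but it handles the long roots by a genuinely different mechanism than the paper. The paper uses one uniform local move for every kind of comparable pair: take $\grb$ minimal in $\calB$ and $\grb'>\grb$; in the short--short and mixed cases it applies the reflection $s_\gra$ exactly as in your phase 1, while in the long--long case Lemma \ref{lemma:dominance-radiciC} iii) gives $\grb'-\grb=2\gra$ and the pair $\{\grb,\grb'\}$ is simply replaced by the single short root $\gra+\grb$, changing the type by $(+1,-2)$; since any two long roots of $\Phi_1^+$ in type $C_n$ are comparable, iterating this one step (each iteration strictly decreasing $\dim\aa$ by minimality of $\grb$) terminates in a set with at most one long root, i.e.\ of type $(h+k',k-2k')$. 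You instead first eliminate all $(2,0)$ and $(1,1)$ comparabilities, observe that incomparability with every long root forces each surviving short root $\gre_p+\gre_q$ to straddle all long indices ($p<j_1$, $q>j_k$), and then pair the long indices extremally in a single global step; your checks of orthogonality, of $\calB'\vdash\calB$, and of the dimension drop via $2\gre_{j_k}$ are all valid, so the construction works. What your route costs is extra bookkeeping that the paper avoids: the straddle property, the choice of pairing, and the unstated (but true, and needed for termination of phase 1) fact that whenever a comparable $(2,0)$ or $(1,1)$ pair exists there is one whose smaller member is minimal in $\calB$ --- replace the smaller member by a minimal $\grb_0$ below it and check the resulting pair is again of one of these two types. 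What it buys is the final configuration in one shot instead of an iteration, and in fact an antichain, which the lemma does not even require. Two small slips, neither fatal: the reflection sends $\grb'$ to $\grb'-\gra<\grb'$, so it is not moved ``strictly upwards'' (only into $\calB^{\ge_0}$, which is all you use); and the reason $2\gre_{j_k}$ does not dominate an existing short root $\gre_p+\gre_q$ is the left half of the straddle, $p<j_1\le j_k$, not the condition $q>j_k$, which is the half of the domination criterion that is satisfied rather than violated.
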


\begin{proof}
Let $\grb \in \calB$ be a minimal element, and suppose that $\grb < \grb'$ for some $\grb' \in \calB$. We construct an orthogonal subset $\calB'$ such that $\calB' \vdash \calB$ and $\dim \aa_{\calB'} < \dim \aa_\calB$, whose type is $(h+1,k-2)$ if $\grb, \grb'$ are both long, and $(h,k)$ otherwise. Since two positive long roots in a root system of tyoe $C_n$ are always comparable, the claim will follow repeating the argument until $\calB'$ contains at most a single long root.

If $\grb, \grb'$ are both long, then by Lemma \ref{lemma:dominance-radiciC}  $\grb' - \grb = 2\gra$ for some short root $\gra \in \Phi_0^+$. Denote $\calB' = (\calB \setminus \{\grb, \grb'\}) \cup \{\gra+\grb\}$. Since $\gra+\grb = \grb' - \gra \in \Phi^+_1$, Lemma~\ref{lem:adding-roots} implies that $(\gra,\grb) = 0 $ for all $\grb \in \calB \setminus \{\grb, \grb'\}$. Therefore $\calB'$ is orthogonal, and it is of type $(h+1, k-2)$ since $\gra+\grb$ is a short root. Moreover $\calB' \vdash \calB$, and since $\grb$ is minimal in $\calB$ we get $\dim \aa_{\calB'} < \dim \aa_\calB$ as well.

Suppose that $\grb, \grb'$ are both short roots. Following Lemma \ref{lemma:dominance-radiciC}, write $\grb' - \grb = \gra + \gra'$ with $\gra, \gra' \in \Phi_0^+$ short orthogonal roots. In particular, it must be $\langle \grb', \gra^\vee \rangle = - \langle \grb, \gra^\vee \rangle = 1$, and by Lemma~\ref{lem:adding-roots} it follows $(\gra,\grb'') = 0$ for all $\grb'' \in \calB \setminus \{\grb, \grb'\}$. Therefore
$$s_\gra(\calB) = \left(\calB \setminus \{\grb, \grb'\} \right) \cup \{\gra+\grb, \grb'-\gra\}.$$
On the other hand, being $\grb' - \grb = \gra + \gra'$, we get $\grb < s_\gra(\grb)$ and $\grb < s_\gra(\grb')$. Therefore $s_\gra(\calB) \vdash \calB$, and since $\grb$ is minimal in $\calB$ it follows $\dim \aa_{s_\gra(\calB)} < \dim \aa_\calB$.

Suppose finally that $||\grb|| \neq ||\grb'||$. Following Lemma \ref{lemma:dominance-radiciC}, we can write $\grb' - \grb = 2\gra + \gra'$ where $\gra, \gra' \in \Phi_0^+$ are short roots with $\langle \gra', \gra^\vee\rangle = -1$. In particular we get $\gra + \gra' \in \Phi_0^+$, hence $\grb + \gra, \grb' - \gra \in \Phi_1^+$, and by Lemma~\ref{lem:adding-roots} it follows $(\gra, \grb'') = 0$ for all $\grb'' \in \calB \setminus \{\grb, \grb'\}$. Therefore
$$s_\gra(\calB) = (\calB \setminus\{\grb, \grb'\}) \cup \{s_\gra(\grb), s_\gra(\grb')\}.$$
On the other hand, being $\grb' - \grb = 2\gra + \gra'$, we get $\grb < s_\gra(\grb)$ and $\grb < s_\gra(\grb')$. Therefore $s_\gra(\calB) \vdash \calB$, and since $\grb$ is minimal in $\calB$ we get $\dim \aa_{s_\gra(\calB)} < \dim \aa_\calB$ as well.
\end{proof}

\begin{proposition}	\label{prop:anticatene-tipoC}
Consider the Hermitian pair $(C_n,\gra_1)$.
% and let $\Phi^+ = \Phi_0^+ \cup \Phi_1^+$ be the corresponding decomposition.
\begin{itemize}
	\item[i)] Let $\calB \subset \Phi_1^+$ be an orthogonal subset of type $(h,k)$. Then there is an antichain $\calA \subset \Phi_1^+$ of type $(h+ \lfloor \frac{k}{2} \rfloor,k-2 \lfloor \frac{k}{2} \rfloor)$ such that $\calA \vdash \calB$.
	\item[ii)] There exists a unique antichain $\calA_* \in \Ort_{\max}(\Phi_1^+)$, and $\calA_* \vdash \calB$ for all $\calB \in \Ort_{\max}(\Phi_1^+)$.
\end{itemize}
\end{proposition}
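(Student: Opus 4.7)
The plan is to derive the proposition by iterating Lemma \ref{lemma:minimal-card} and then appealing to the characterization of the antichain in $\Ort_{\max}(\Phi_1^+)$ already given in the discussion preceding the statement.

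For part (i), I would start by observing that in $\Phi_1^+$ of type $C_n$ any two positive long roots $2\gre_i,2\gre_j$ are comparable under $\leq_0$, so any antichain contains at most one long root. In particular, if $\calB$ is itself an antichain then $k\in\{0,1\}$, $\lfloor k/2\rfloor=0$, and $\calA=\calB$ works. Otherwise I would apply Lemma \ref{lemma:minimal-card} to produce an orthogonal subset $\calB_1$ of type exactly $(h+\lfloor k/2\rfloor,\,k-2\lfloor k/2\rfloor)$ satisfying $\calB_1\vdash\calB$ and $\dim\aa_{\calB_1}<\dim\aa_\calB$. The number of long roots of $\calB_1$ lies in $\{0,1\}$, so each further application of Lemma \ref{lemma:minimal-card} preserves the type while strictly decreasing $\dim\aa_{\calB_j}$. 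Since these dimensions are bounded below by the cardinality $h+\lfloor k/2\rfloor+(k-2\lfloor k/2\rfloor)>0$, the iteration terminates after finitely many steps, at which point we have reached an antichain $\calA$ of the required type with $\calA\vdash\calB$.

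For part (ii), the uniqueness of the antichain $\calA_*\in\Ort_{\max}(\Phi_1^+)$ is already derived in the paragraph preceding the proposition from the nested-indices characterization $i_1<\cdots<i_m\leq j_m<\cdots<j_1$ together with the fact that a maximal orthogonal subset partitions $\{1,\ldots,n\}$. So it suffices to show $\calA_*\vdash\calB$ for every $\calB\in\Ort_{\max}(\Phi_1^+)$. Given such a $\calB$ of type $(h,k)$ with $2h+k=n$, I would invoke part (i) to obtain an antichain $\calA$ of type $(h+\lfloor k/2\rfloor,\,k-2\lfloor k/2\rfloor)$ with $\calA\vdash\calB$. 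Since $2(h+\lfloor k/2\rfloor)+(k-2\lfloor k/2\rfloor)=2h+k=n$, the antichain $\calA$ still lies in $\Ort_{\max}(\Phi_1^+)$, hence by uniqueness $\calA=\calA_*$, and we conclude.

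The main obstacle has in fact already been surmounted: it is encoded in Lemma \ref{lemma:minimal-card}, which performs the nontrivial type-$(0,2)$ reduction producing a short root plus a strict decrease in $\dim\aa_{\calB}$. The present proposition is essentially a bookkeeping assembly of that lemma, and the only small subtlety is verifying that the arithmetic of the type $(h+\lfloor k/2\rfloor,\,k-2\lfloor k/2\rfloor)$ behaves correctly under iteration (which is immediate once one notes $\lfloor k/2\rfloor=0$ for $k\in\{0,1\}$) and that the resulting antichain lies in $\Ort_{\max}(\Phi_1^+)$ when $\calB$ does.
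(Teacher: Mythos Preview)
Your proposal is correct and follows essentially the same approach as the paper: iterate Lemma~\ref{lemma:minimal-card}, using the strict decrease of $\dim\aa_{\calB_i}$ to guarantee termination at an antichain, and then for (ii) combine part (i) with the uniqueness of the maximal antichain already established in the preceding discussion. Your write-up is in fact slightly more explicit than the paper's, in that you spell out why the type stabilizes after the first application (since $k-2\lfloor k/2\rfloor\in\{0,1\}$) and why the resulting antichain remains in $\Ort_{\max}(\Phi_1^+)$.
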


\begin{proof}
i) Suppose that $\calB$ is not an antichain, by Lemma \ref{lemma:minimal-card} there is an orthogonal subset $\calB_1 \subset \Phi_1^+$ of type $(h+\lfloor \frac{k}{2} \rfloor, k-2\lfloor \frac{k}{2} \rfloor)$ such that $\calB_1 \vdash \calB$ and $\dim \aa_{\calB_1} < \dim \aa_\calB$. Suppose that $\calB_i$ is defined, and suppose that $\calB_i$ is not an antichain. Then we can apply Lemma \ref{lemma:minimal-card} again, and we find an orthogonal subset $\calB_{i+1} \subset \Phi_1^+$ such that $\calB_{i+1} \vdash \calB_i$ and $\dim \aa_{\calB_{i+1}} < \dim \aa_{\calB_i}$. Since $\calB_{i+1}$ is not empty, $\aa_{\calB_{i+1}}$ cannot be zero, therefore the process must stop for some $k$, and $\calB_k$ is an antichain.

ii) As we already noticed, if $\calB \in \Ort(\Phi_1^+)$ has type $(h,k)$, then $\calB$ is maximal if and only if $2h+k=n$. Therefore by i) for all $\calB\in \Ort_{\max}(\Phi_1^+)$ there exists an antichain $\calA\in \Ort_{\max}(\Phi_1^+)$ such that $\calA \vdash \calB$. We also already noticed that there is a unique antichain $\calA_*\in \Ort_{\max}(\Phi_1^+)$, therefore $\calA_* \vdash \calB$ for all $\calB \in \Ort_{\max}(\Phi_1^+)$.
\end{proof}

\subsection{Hermitian symmetric spaces of tube type} \label{tt}
Let $(\Pi, \gra_q)$ be a Hermitian pair and let $\gop^+$ be the corresponding standard parabolic subalgebra of $\gog$. Let $\calS_{\Pi,\gra_q} = \{\grg_1, \ldots, \grg_r\}$ be the set of Harish-Chandra strongly orthogonal roots, and set $\h^- = \Span(\gamma_i^\vee\st i=1,\ldots,r)$. By \cite{HC}, \cite{M}, a root $\a\in\D$ is in $\Dp_1$ if and only if either $\gra_{|\h^-} = \half(\gamma_i+\gamma_j)$ , for some $i \leq j$, or $\gra_{|\h^-} = \half \gamma_i$ for some $i$. A root $\a\in\D$ is in $\Dp_0$ if and only if either $\gra_{|\h^-} = \half(\gamma_i-\gamma_j)$, for some $i \leq j$, or $\gra_{|\h^-} =\pm \half \gamma_i$, for some $i$. Recall that the Hermitian symmetric space $G/P$ is called of \textit{tube type} if it is holomorphically equivalent to the tube over a self dual cone. It is known (cf. \cite{KW}) that Hermitian symmetric spaces of tube type correspond to Hermitian involutions such that $\a\in\D$ is in $\Dp_1$ if and only if $\a_{|\h^-} = \half(\gamma_i+\gamma_j)$ for some $i \leq j$, and a root $\a\in\D$ is in $\Dp_0$ if and only if $\gra_{|\h^-} = \half(\gamma_i-\gamma_j)$, for some $i \leq j$. We will call such involutions \textit{Hermitian involutions of tube type}. Observe that a Hermitian involution is of tube type if and only if 
\begin{equation}\label{sumtube}
(\sum_{i=1}^r \gamma_i,\a)=(\a_q,\a_q)\ i,\mbox{  for all }\a\in\D_i, \ i=0,1. 
\end{equation}
%\footnote{Referenza?}

Hermitian symmetric spaces of tube type are classified by the Hermitian pairs $(\Pi, \gra_q)$ such that $w_0(\gra_q) = -\gra_q$, in which case we say that $(\Pi, \gra_q)$ is a \textit{Hermitian pair of tube type} (see e.g. \cite[Ch. X, D.4 pg. 528]{Hel}). In particular, we have the following possibilities:
\begin{itemize}
	\item[i)] $(A_{2q-1}, \a_q)$;
	\item[ii)] $(B_n, \a_1)$;
	\item[iii)] $(C_n, \a_n)$;
	\item[iv)]  $(D_n,\a_{n-1})$ with $n$ even; $(D_n,\a_n)$ with $n$ even; $(D_n, \a_1)$ for all $n$;
 	\item[v)] $(E_7,\a_7)$.
\end{itemize}
Notice that being of tube type  is equivalent to the fact that $\gop_\mru^+$ is a regular pre-homogeneous space under the action of $G_0$, namely the boundary of the open $G_0$-orbit has codimension 1 (see \cite{MRS} and the references therein).

%These are the cases corresponding to the following Cartan labels, and to the following Hermitian pairs $(\Pi,\gra_p)$, that we call \textit{Hermitian pairs of tube type}. it follows that an Hermitian pair 
%\begin{itemize}
%	\item[i)] $AI\!I\!I$ ($p=q$), corresponding to the Hermitian pair $(A_{2p-1}, \a_p)$;
%	\item[ii)] $DI\!I\!I$ ($n$ even), corresponding to the Hermitian pairs $(D_n, \a_{n-1})$ and $(D_n,\a_n)$, with $n$ even;
%	\item[iii)] $CI$, corresponding to the Hermitian pair $(C_n,\a_n)$;
%	\item[iv)] $B\!DI$ ($p=2$), corresponding to the Hermitian pairs $(B_n,\a_1)$ and $(D_n,\a_1)$;
%	\item[v)] $EV\!I\!I$, corresponding to the Hermitian pair $(E_7,\a_7)$.
%\end{itemize}

%We now prove that for Hermitian involutions of tube type there exists a unique antichain in $\Ort_{\max}(\Phi_1^+)$. We start with a simple observation:
If $(\Pi,\a_q)$ is a Hermitian pair of tube type and $\Pi$ is not simply laced, then the short roots in $\Phi_1$ admit a nice description:
\begin{lemma}\label{short:tube}
Suppose that $\Phi$ is not simply laced and let $\s$ be a Hermitian involution of tube type. Let $\calS$ be an orthogonal subset of $\Phi_1^+$ of maximal cardinality and let $\be\in \Phi_1^+$ be a short root. Then $\be=\half(\gamma +\gamma')$ for some distinct elements $\grg, \grg' \in \calS$.
\end{lemma}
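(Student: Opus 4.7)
The plan is to reduce to a direct case-by-case verification. Since $\Phi$ is not simply laced and $\s$ is of tube type, by the classification listed just before the lemma in Subsection \ref{tt} only two Hermitian pairs can occur: $(B_n,\a_1)$ with $n\geq 2$, and $(C_n,\a_n)$. In both cases, Remark \ref{oss:max-cardinality} guarantees that any $\calS$ of maximal cardinality consists entirely of long roots, so one is looking at pairs or tuples of long roots in $\Phi_1^+$ and trying to recover a short root as a half-sum.

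For $(C_n,\a_n)$, in the realization of $\Phi$ given in the symplectic subsection the long roots in $\Phi_1^+$ are precisely $2\gre_1,\ldots,2\gre_n$, and this set is already pairwise orthogonal of cardinality $n = r$. Hence $\calS = \{2\gre_1,\ldots,2\gre_n\}$ is forced, and every short root $\be \in \Phi_1^+$ has the form $\gre_i + \gre_j$ with $i<j$, which one trivially rewrites as $\be = \half(2\gre_i + 2\gre_j)$.

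For $(B_n,\a_1)$, I would use the explicit description $\Phi_1^+ = \{\grb_i \st 1\leq i\leq n\} \cup \{\grb'_j \st 1\leq j<n\}$ from the odd orthogonal subsection. Passing to the standard $\gre$-coordinates of $B_n$ one finds $\grb_j = \gre_1 - \gre_{j+1}$ for $j<n$, $\grb_n = \gre_1$, and $\grb'_j = \gre_1 + \gre_{j+1}$, so that the unique short root in $\Phi_1^+$ is $\grb_n$. Computing inner products among these roots, one checks that the only orthogonal pairs of long roots in $\Phi_1^+$ are the sets $\{\grb_i,\grb'_i\}$ for $1 \leq i < n$, so every $\calS$ of maximal cardinality is of this form. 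A one-line computation then gives $\half(\grb_i + \grb'_i) = \gre_1 = \grb_n$, which is the desired identity.

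There is no genuine obstacle in this plan, since the classification in Subsection \ref{tt} and Remark \ref{oss:max-cardinality} reduce the statement to two concrete root-system computations. The only mildly delicate point is verifying, in case $(B_n,\a_1)$, that every maximal orthogonal subset of long roots in $\Phi_1^+$ is of the form $\{\grb_i,\grb'_i\}$; this is settled by a short inspection of the inner products among the $\grb_i$ and $\grb'_j$. Alternatively, one could invoke the $W_0$-transitivity on $\Ort_{(0,r)}(\Phi_1^+)$ asserted in Remark \ref{oss:max-cardinality} to reduce to the Harish-Chandra set $\calS_{\Pi,\a_1} = \{\grb_1,\grb'_1\}$, and then transport the identity $\grb_n = \half(\grb_1+\grb'_1)$ by any $w\in W_0$ with $w(\calS_{\Pi,\a_1})=\calS$, noting that $W_0$ fixes $\grb_n$ because it preserves $\Phi_1^+$ and root lengths and $\grb_n$ is the only short root in $\Phi_1^+$.
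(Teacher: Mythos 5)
Your proof is correct, but it follows a genuinely different route from the paper. The paper argues uniformly, with no case distinction: by Remark \ref{oss:max-cardinality} every element of $\calS$ is long and, after changing the positive system of $\Phi_0$, one may assume $\calS$ is the Harish--Chandra set of strongly orthogonal roots; the tube-type description of the restrictions to $\h^-$ then gives $\be = \half(\grg+\grg')+\grl$ with $\grl$ orthogonal to all of $\calS$, and comparing squared lengths ($\Vert\be\Vert^2 = \half\Vert\grg\Vert^2 + \Vert\grl\Vert^2$ versus $\be$ short) forces $\grl=0$. You instead use the classification of non-simply-laced Hermitian pairs to reduce to $(B_n,\gra_1)$ and $(C_n,\gra_n)$ and check the identity in coordinates; your computations are right in both cases (in type $C_n$ the long roots $2\gre_1,\dots,2\gre_n$ are forced to constitute $\calS$, and in type $B_n$ the only orthogonal pairs of long roots are $\{\grb_i,\grb'_i\}$ with half-sum $\gre_1=\grb_n$), and your $W_0$-transitivity variant for $B_n$ is also sound since $W_0$ preserves $\Phi_1^+$ and root lengths. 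What the paper's argument buys is independence from the classification tables and from explicit coordinates, staying within the Harish--Chandra/restricted-root formalism that the rest of Section \ref{4} and Section \ref{5} are built on; what your argument buys is an elementary, self-contained verification which also makes visible that in the non-simply-laced case the tube-type hypothesis is automatic rather than an extra assumption.
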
 
\begin{proof}
By Remark \ref{oss:max-cardinality}, every $\grg \in \calS$ is a long root of $\Phi$, and we can choose a set of positive roots in $\Phi_0^+ \subset \Phi_0$ so that $\calS$ is the corresponding set of Harish-Chandra strongly orthogonal roots. Since $\s$ is of tube type, we have that $\be=\half(\gamma+\gamma')+\l$, for some $\grg, \grg' \in \calS$ with $(\grg, \grg') = 0$ and some $\l$ with $(\l,\gamma'')=0$ for all $\grg'' \in \calS$, therefore $\Vert\be\Vert^2=\frac{1}{4}( \Vert\gamma \Vert^2+\Vert\gamma' \Vert^2)+\Vert\l\Vert^2=\half \Vert\gamma \Vert^2+\Vert\l\Vert^2$. On the other hand $\be$ is a short root, therefore $||\grb||^2 = \half ||\grg||^2$ and it follows $\grl = 0$.
\end{proof}

If $\grs$ is the Hermitian involution of tube type associated to the Hermitian pair $(B_n,\gra_1)$ or $(C_n,\gra_n)$, we proved in Proposition \ref{prop:anticatene-tipoB} and Proposition \ref{prop:anticatene-tipoC} that there exists a unique antichain $\calA_* \in \Ort_{\max}(\Phi_1^+)$, and that $\calA_* \vdash \calB$ for all $\calB \in \Ort_{\max}(\Phi_1^+)$. We now show that this property holds whenever $\grs$ is a Hermitian involution of tube type.

\begin{proposition} \label{lemmahermitiano}
Suppose that $\s$ is a Hermitian involution of tube type.
% and let $\Phi^+ = \Phi_0^+ \cup \Phi_1^+$ be the corresponding decomposition. 

Then there exists a unique antichain $\mathcal A_* \in \Ort_{\max}(\Phi_1^+)$, and $\calA_* \vdash \calB$ for all $\calB \in \Ort_{\max}(\Phi_1^+)$.
\end{proposition}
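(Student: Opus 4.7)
First I reduce to the simply laced setting. By Propositions~\ref{prop:anticatene-tipoB} and \ref{prop:anticatene-tipoC} the statement is already known for the two non-simply laced tube type pairs $(B_n, \gra_1)$ and $(C_n, \gra_n)$, so it remains to treat the five simply laced tube type families from the classification: $(A_{2q-1}, \gra_q)$, $(D_n, \gra_1)$, $(D_n, \gra_{n-1})$ and $(D_n, \gra_n)$ with $n$ even, and $(E_7, \gra_7)$.

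Next I isolate uniqueness as the only nontrivial issue. For any $\calB \in \Ort_{\max}(\Phi_1^+)$, we have $|\calB| = r$ by Remark~\ref{oss:max-cardinality}, and Theorem~\ref{antichainbelow}(i) supplies an antichain $\calA \subset \Phi_1^+$ with $|\calA| = r$ and $\calA \vdash \calB$. Every antichain is automatically orthogonal (as observed before Lemma~\ref{lemma:dominanza}), and $r$ is the maximum cardinality of an orthogonal subset of $\Phi_1^+$, so $\calA$ lies in $\Ort_{\max}(\Phi_1^+)$. In particular, antichains in $\Ort_{\max}(\Phi_1^+)$ do exist. Moreover, if the antichain $\calA_* \in \Ort_{\max}(\Phi_1^+)$ is unique, then applying the construction above to an arbitrary $\calB \in \Ort_{\max}(\Phi_1^+)$ produces precisely $\calA_*$, whence $\calA_* \vdash \calB$. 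Hence the proposition reduces to proving uniqueness of the antichain in $\Ort_{\max}(\Phi_1^+)$.

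To prove uniqueness I intend to argue family by family. In each of the five simply laced tube type families one can write down an explicit candidate $\calA_*$ --- for example, $\calA_* = \{\gre_k - \gre_{q+k} : 1 \leq k \leq q\}$ in the $(A_{2q-1}, \gra_q)$ case, and $\calA_* = \{\gre_1 + \gre_n, \gre_1 - \gre_n\}$ in the $(D_n, \gra_1)$ case --- and then show, by an elementary pigeonhole-type argument, that any other $\calB \in \Ort_{\max}(\Phi_1^+)$ contains a pair of roots comparable under $\leq_0$. Using $W_0$-transitivity on $\Ort_{\max}$ from Remark~\ref{oss:max-cardinality}, one parametrizes $\Ort_{\max}(\Phi_1^+)$ by finite combinatorial data (permutations of $\{1, \ldots, r\}$ in the $A$-case, matchings of $\{1, \ldots, r\}$ into pairs and singletons in the $D$-cases, and a short explicit list for $(E_7, \gra_7)$); the antichain condition then isolates a unique datum via a rigidity statement on the dominance order inherited from the explicit description of $\Phi^+$ and $\Phi_0^+$.

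The principal obstacle is this uniqueness step: I do not foresee a proof that is uniform across the five families, because the shape of $\calA_*$ depends sensitively on the Hermitian pair. For instance, in $(D_n, \gra_1)$ with $n \geq 4$ the antichain is singled out by the fact that $2\gre_n$ is not a non-negative integer combination of positive roots of $\Phi_0$, although $2\gre_i$ is such a combination for every $i < n$; analogous but more delicate rigidities govern the other two $D$-families and the exceptional case $(E_7, \gra_7)$. Once these are in place, the proposition follows from the reduction above.
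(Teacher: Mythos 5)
Your reductions are sound and coincide with the paper's: the non-simply-laced tube type pairs are covered by Propositions \ref{prop:anticatene-tipoB} and \ref{prop:anticatene-tipoC}, and in the simply laced case Proposition \ref{prop:anticatene-simply} together with Remark \ref{oss:max-cardinality} reduces the whole statement to the uniqueness of the antichain in $\Ort_{\max}(\Phi_1^+)$. But that uniqueness is the entire content of the proposition, and you do not prove it: you announce a family-by-family plan, exhibit candidate antichains in two of the five families, and defer the ``rigidity'' arguments that would show no other maximal orthogonal subset is an antichain. As written this is a gap, not a proof. Note also that uniqueness genuinely requires the tube type hypothesis and is not an automatic feature of maximal orthogonal sets: for the non-tube pair $(A_n,\gra_q)$ with $q<n+1-q$, every set $\{\gre_1-\gre_{j_1},\dots,\gre_q-\gre_{j_q}\}$ with $j_1<\dots<j_q$ an arbitrary increasing sequence in $\{q+1,\dots,n+1\}$ is an antichain in $\Ort_{\max}(\Phi_1^+)$, so uniqueness fails badly there. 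Any completion of your plan must therefore use the tube structure inside each family, and you have not indicated how.

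The paper's proof avoids the case analysis entirely and is where the tube type hypothesis enters. Given two antichains $\calA,\calA'\in\Ort_{\max}(\Phi_1^+)$, both are Harish-Chandra systems for suitable choices of positive roots in $\Phi_0$ (Remark \ref{oss:max-cardinality}), and the tube type identity (\ref{sumtube}) forces $\sum_{\grg\in\calA}\grg=\sum_{\grg'\in\calA'}\grg'$. One then forms the bipartite graph on $\calA\cup\calA'$ whose edges record non-orthogonality, shows that every nontrivial connected component is an alternating cycle in which each node has degree exactly two, orients the edges according to the dominance order, uses the antichain property to conclude that all nodes of one set are sources, and computes that the resulting element $\grl=\sum\grg_i-\sum\grg'_i$ is simultaneously $>_0 0$ and of norm zero, a contradiction. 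This uniform argument is precisely the step your proposal is missing; carrying out your five explicit verifications instead would in principle work but would be considerably longer and less illuminating.
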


\begin{proof}
As we noticed, the claim has already been proved if $\Phi$ is not simply laced. Therefore we will assume that $\Phi$ is simply laced, so that $\Ort_{\max}(\Phi_1^+)$ coincides with the collection of the orthogonal subsets of maximal cardinality $r$.  By Proposition \ref{prop:anticatene-simply}, for all $\calB \in \Ort_{\max}(\Phi_1^+)$, there is an antichain $\calA \in \Ort_{\max}(\Phi_1^+)$ such that $\calA \vdash \calB$. Therefore we only need to show the uniqueness of the antichain in $\Ort_{\max}(\Phi_1^+)$. 
%\footnote{Il vecchio corollario non mi sembra che servisse, questo paragrafo risponde.}

%\footnote{Da qui in poi assumerei simply laced. Siccome per l'esistenza dell'anticatena abbiamo distinto simply laced, B e C secondo me non ha molto senso complicare molto la dimostrazione per fare l'unicit\`a senza riferirsi a quei casi.}

Suppose $\mathcal A, \mathcal A' \in \Ort_{\max}(\Phi_1^+)$ are both antichains. %We will prove that $\mathcal A=\mathcal A'$ by induction on $r$. If $r=1$, since $\s$ corresponds to a Hermitian symmetric space of tube type, it follows that $\Dp_1=\{\gamma_1\}$ consists of a single root, so the unique antichain is $\Dp_1$ itself.
Since they are both of maximal cardinality, by Remark \ref{oss:max-cardinality}, they are both sets of Harish-Chandra roots for some choice of positive sets of roots in $\Phi_0$. Since the pair $(\Pi,\a_q)$ is of tube type we have, by (\ref{sumtube}),
$$
(\sum_{\gamma\in\calA}\grg,\a_q)=(\a_q,\a_q)=(\sum_{\gamma'\in\calA'}\grg',\a_q)
$$
and, if $\a\in\Pi_0$,
$$
(\sum_{\gamma\in\calA}\grg,\a)=0=(\sum_{\gamma'\in\calA'}\grg',\a).
$$
It follows that
\begin{equation}\label{sumgrg}
\sum_{\gamma\in\calA}\grg=\sum_{\gamma'\in\calA'}\grg'.
\end{equation}
%Suppose that $r > 1$. 
%We choose $\Gamma=\{\gamma_1,\ldots, \gamma_r\}\in \Ort_{\max}(\Phi_1^+)$ and $\Gamma'=\{\gamma'_1,\ldots, \gamma'_r\}\in \Ort_{\max}(\Phi_1^+)$ such that
%\begin{gather*}
%\mathcal A=\{\half(\gamma_{2i-1}+\gamma_{2i}) \st i=1,\ldots,h\}\cup\{\gamma_{2h+i} \st i=1,\ldots,k\}\\
%\mathcal A'=\{\half(\gamma'_{2i-1}+\gamma'_{2i}) \st i=1,\ldots,h'\} \cup \{\gamma'_{2h'+i} \st i=1,\ldots,k'\}.
%\end{gather*} 
Let $C$ be the matrix $((\gamma,\gamma'))_{\gamma\in\mathcal A,\gamma' \in \mathcal A'}$. 
 Consider the  matrix $C'$ obtained by replacing the nonzero entries of $C$ with $1$. This is the incidence matrix of a relation. Let $\mathcal G$ be its incidence graph. Write $\mathcal G=\cup_i \mathcal G_i$, where $\mathcal G_i$ are the connected components of $\mathcal G$. Assume first that $\mathcal G_i$  has more than one node. If $\grg\in\mathcal G_i\cap\calA\cap\calA'$ then $(\grg,\grg')=0$ for all $\grg'\in\calA'\setminus\{\gamma\}$, so $\grg$ is connected in $\mathcal G$ only to itself. Since $\mathcal G_i$ is connected  this is not possible, hence  $\mathcal G_i\cap\calA\cap\calA'=\emptyset$. If $\mathcal G_i\cap\calA=\emptyset$ then any $\grg'\in\mathcal G_i\cap\calA'$ is orthogonal to $\calA$, contradicting the fact that $\calA$ is in $\Ort_{\max}(\Phi_1^+)$. Symmetrically we have that also   $\mathcal G_i\cap\calA'$ is not empty. Since $\mathcal G_i$ is connected, if $\gamma_0\in\mathcal G_i\cap\calA$, there must be $\gamma'_0\in\mathcal G_i\cap\calA'$ such that $\gamma_0\ne\grg'_0$ and $(\grg_0,\grg'_0)\ne0$.  
 Since 
 $$
 \langle\sum_{\gamma\in\calA}\gamma,\grg_0^\vee\rangle=2= \langle\sum_{\gamma'\in\calA'}\gamma',\grg_0^\vee\rangle=1+\langle\sum_{\gamma'\ne\grg'_0}\gamma',\grg_0^\vee\rangle,
 $$
 we see that there are exactly two nodes  in $\mathcal G_i$ to which $\grg_0$ is connected. Symmetrically, the same property holds for all $\gamma'\in\mathcal G_i\cap\calA'$. Thus every node has degree exactly  $2$ in $\mathcal G_i$. It follows that $\mathcal G_i$ is a cycle
$$
\begin{tikzpicture}[double distance=2pt,>=stealth] 
\matrix [matrix of math nodes,row sep=.5cm,column sep=.7cm]
{&|(11)|\bullet&&&[-1cm]&[-0.5cm]&\\
|(22)|\bullet&|(23)|\bullet&|(24)|\bullet& |(25)|\bullet\\
};
\begin{scope}[every node/.style={midway,auto},thick]
\draw (11) -- (22);
\draw (11) -- (25);
\draw (22) -- (23);
\draw[dotted] (23) -- (24);
\draw (24)--(25);
\end{scope}
\end{tikzpicture}
$$
Since $\mathcal G_i\cap\calA\cap\calA'=\emptyset$ and the nodes in $\mathcal A$ connect only to nodes in $\mathcal A'$ we have only this possibility (letting $\circ$ be the nodes in $\mathcal A$ and $\bullet$ the nodes in $\mathcal A'$):
$$
\begin{tikzpicture}[double distance=2pt,>=stealth] 
\matrix [matrix of math nodes,row sep=.5cm,column sep=.7cm]
{&&|(11)|\circ&\\
|(22)|\bullet&|(23)|\circ&|(24)|\bullet& |(25)|\circ& |(26)|\bullet\\
};
\begin{scope}[every node/.style={midway,auto},thick]
\draw (11) -- (22);
\draw (11) -- (26);
\draw (22) -- (23);
\draw(23) -- (24);
\draw[dotted](24) -- (25);
\draw (25)--(26);
\end{scope}
\end{tikzpicture}
$$

 If $(\grg,\grg')\ne0$ then $(\grg,\grg')>0$ so $\grg-\grg'$ is a root. It follows that either $\grg>\grg'$ or $\grg'>\grg$. We give an orientation to the graph $\mathcal G$ by orienting the edges so that they point from the larger to the smaller root. Since both $\mathcal A$ and $\mathcal A'$ are antichains, we cannot have consecutive arrows $
\begin{tikzpicture}[double distance=2pt,>=stealth] 
\matrix [matrix of math nodes,row sep=1cm,column sep=.4cm]
{%|(11)|\circ&&&[-1cm]&[-0.5cm]&\\
|(22)|\circ&|(23)|\bullet& |(25)|\circ\\
};
\begin{scope}[every node/.style={midway,auto},thick]
\draw[->] (22) -- (23);
\draw[->] (23)--(25);
\end{scope}
\end{tikzpicture}$, $
\begin{tikzpicture}[double distance=2pt,>=stealth] 
\matrix [matrix of math nodes,row sep=.1cm,column sep=.4cm]
{%|(11)|\circ&&&[-1cm]&[-0.5cm]&\\
|(22)|\bullet&|(23)|\circ& |(25)|\bullet\\
};
\begin{scope}[every node/.style={midway,auto},thick]
\draw[->] (22) -- (23);
\draw[->] (23)--(25);
\end{scope}
\end{tikzpicture}$. It follows that the nodes $\circ$ are either all sources or all sinks. By eventually exchanging $\calA$ and $\calA'$, we can assume  that all $\circ$ are sources and all $\bullet$ are sinks. This means that we can enumerate $\calA\cap\mathcal G_i=\{\grg_1,\ldots,\grg_s\}$ and $\calA'\cap\mathcal G_i=\{\grg'_1,\ldots,\grg'_s\}$ so that $\grg'_i<_0\grg_i$.
 
 It follows that $\l=\sum \grg_i-\sum\grg'_i >_0 0$. On the other hand
\begin{eqnarray*}
\Vert\l\Vert^2&=&\Vert\sum \grg_i\Vert^2+\Vert\sum \grg_i'\Vert^2-(\sum \grg_i,\sum\grg'_i)-(\sum \grg'_i,\sum\grg_i)\\
&=&\Vert\sum \grg_i\Vert^2+\Vert\sum \grg_i'\Vert^2-(\sum \grg_i,\sum_{\grg'\in\calA'}\grg')-(\sum \grg'_i,\sum_{\grg\in\calA}\grg).
\end{eqnarray*}
By (\ref{sumgrg}), 
\begin{eqnarray*}
\Vert\l\Vert^2&=&\Vert\sum \grg_i\Vert^2+\Vert\sum \grg_i'\Vert^2-(\sum \grg_i,\sum_{\grg\in\calA}\grg)-(\sum \grg'_i,\sum_{\grg'\in\calA'}\grg')\\
&=&\Vert\sum \grg_i\Vert^2+\Vert\sum \grg_i'\Vert^2-(\sum \grg_i,\sum \grg_i)-(\sum \grg'_i,\sum \grg'_i)=0.
\end{eqnarray*}
 It follows that $\mathcal G_i$ has only one node for all $i$. As observed earlier $\mathcal G_i\cap \calA\ne \emptyset$ and $\mathcal G_i\cap\calA'\ne \emptyset$, thus, if $\mathcal G_i=\{\gamma\}$,  then $\grg\in\calA\cap\calA'$. Therefore $\calA\cup\calA'=\cup_i\mathcal G_i\subset\calA\cap\calA'$. Thus $\calA=\calA'$.
 \end{proof}
\begin{remark} The proof of uniqueness of the antichain we have given when $\Phi$ is simply laced can be extended (with some complications) to a uniform proof for any $\Phi$. Since the two non simply laced cases are easily dealt individually, we preferred to omit this more complicated approach.
\end{remark}
Given $\calB \subset \Phi_1^+$, 
set
$$
	\calB^{\leq_0} = \{ \gra \in \Phi^+_1 \st \mbox{ there is }\grb \in \calB \mbox{ such that }\gra \leq_0 \grb\}.
$$
Notice that $\calB^{\leq_0} = \Psi(\aa^-_\calB)$, where $\aa_{\calB}^- \subset \gop_\mru^+$ is the $B_0^-$-stable subalgebra generated by $\calB$ and $B_0^- \subset G_0$ is the opposite Borel subgroup of $B$.

If $\calA_* \in \Ort_{\max}(\Phi_1^+)$ is the unique antichain, it follows by Proposition \ref{lemmahermitiano}, that $\calA_* \subset \calB^{{\geq}_0}$ for all $\calB \in \Ort_{\max}(\Phi_1^+)$. The following corollary shows that  $\calA_* \subset \calB^{\leq_0}$ as well.

\begin{corollary}	\label{cor:anticatena-sottosopra}
Let $\calA_* \in \Ort_{\max}(\Phi_1^+)$ be the unique antichain. Then $\calA_* \subset \calB^{\leq_0}$ for all $\calB \in \Ort_{\max}(\Phi_1^+)$. 
\end{corollary}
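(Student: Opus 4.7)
The plan is to deduce Corollary~\ref{cor:anticatena-sottosopra} from Proposition~\ref{lemmahermitiano} by exploiting the symmetry provided by the longest element $w_0 \in W_0$, which reverses the dominance order $\leq_0$. The key point is that, by the uniqueness assertion of Proposition~\ref{lemmahermitiano}, the antichain $\calA_*$ must be fixed by $w_0$; once this is established, applying $w_0$ to the inclusion $\calA_* \subset (w_0(\calB))^{{\geq}_0}$ supplied by that proposition immediately gives the desired inclusion $\calA_* \subset \calB^{{\leq}_0}$.

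First I would verify the required properties of $w_0$ acting on $\Phi_1^+$. Since $\gop_\mru^+$ is stable under $G_0$, the set $\Phi_1^+$ of its $\h_0$-weights is $W_0$-invariant, so in particular $w_0(\Phi_1^+)=\Phi_1^+$. On the other hand, as $w_0$ is the longest element of $W_0$, one has $w_0(\Phi_0^+)=\Phi_0^-$, hence for all $\xi,\eta \in \Phi_1^+$ the relation $\xi \leq_0 \eta$ is equivalent to $w_0(\eta) \leq_0 w_0(\xi)$. Orthogonality is $W_0$-invariant and cardinality is preserved, so $w_0$ permutes $\Ort_{\max}(\Phi_1^+)$; moreover, being an antichain is a symmetric condition with respect to order reversal, so $w_0$ sends antichains to antichains. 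The uniqueness statement in Proposition~\ref{lemmahermitiano} then forces $w_0(\calA_*)=\calA_*$.

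For the main step, fix $\calB \in \Ort_{\max}(\Phi_1^+)$ and set $\calB' = w_0(\calB)$, which still lies in $\Ort_{\max}(\Phi_1^+)$. Proposition~\ref{lemmahermitiano} applied to $\calB'$ yields $\calA_* \subset (\calB')^{{\geq}_0}$: for every $\gamma \in \calA_*$ there is $\beta \in \calB'$ with $\beta \leq_0 \gamma$. Applying $w_0$ and using its order-reversing property, we obtain $w_0(\gamma) \leq_0 w_0(\beta)$, where $w_0(\gamma) \in w_0(\calA_*) = \calA_*$ and $w_0(\beta) \in w_0(\calB') = \calB$. Since $w_0$ restricts to a bijection of $\calA_*$ onto itself, as $\gamma$ ranges over $\calA_*$ so does $w_0(\gamma)$, and we conclude $\calA_* \subset \calB^{{\leq}_0}$.

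No serious obstacle is expected here: the entire argument rests on the uniqueness of the antichain in $\Ort_{\max}(\Phi_1^+)$, which forces $\calA_*$ to be invariant under any order-reversing symmetry of $(\Phi_1^+, \leq_0)$ preserving $\Ort_{\max}(\Phi_1^+)$, and $w_0$ is a natural such symmetry. The only minor subtlety is to observe that $w_0$ genuinely preserves $\Phi_1^+$ (and not merely $\Phi_1$), which is automatic from the $G_0$-stability of $\gop_\mru^+$.
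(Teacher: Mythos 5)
Your proof is correct and rests on the same key idea as the paper's: the notion of antichain is invariant under reversing the dominance order, so the uniqueness statement of Proposition~\ref{lemmahermitiano} can be applied to the reversed order as well. The paper implements the reversal abstractly, by replacing $\Phi_0^+$ with $\Phi_0^-$ and invoking the proposition for the resulting preorder $\vdash'$, while you realize the same reversal concretely via the longest element $w_0\in W_0$ (using $w_0(\Phi_0^+)=\Phi_0^-$ and $w_0(\Phi_1^+)=\Phi_1^+$, together with the extra observation $w_0(\calA_*)=\calA_*$ forced by uniqueness); the two arguments are equivalent.
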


\begin{proof}
Let $\leq_0'$ be the partial order on $\Phi_1^+$ defined by $\Phi_0^-$. Then $\leq_0'$ is the reverse partial order of $\leq_0$, therefore a subset $\calA \subset \Phi_1^+$ is an antichain w.r.t. $\leq_0$ if and only if it is an antichain w.r.t. $\leq_0'$. Therefore, if $\vdash'$ is the preorder on $\Ort(\Phi_1^+)$ defined by $\leq_0'$, it follows by Proposition \ref{lemmahermitiano} that $\calA_* \vdash' \calB$, namely $\calA_* \subset \calB^{\leq_0}$.
\end{proof}

\section{The special $B_0$-stable abelian subalgebra}\label{5}
%%%%%%%%%%%%%%%%%%%%%%%%%%%%%%%%%%%%%%%%%%%%%%%%%%%%%%%
%%%%%%%%%%%%%%%%%%%%%%%%%%%%%%%%%%%%%%%%%%%%%%%%%%%%%%%

For the rest of the paper we will assume that $\s : G \lra G$ is an (indecomposable) involution. Moreover, throughout this section, we assume that $\g_0$ is semisimple and that the simple root $\a_p \in \widehat{\Pi}$ corresponding to $\s$ is long and non-complex.

Recall from Section \ref{bs} the element $w_p$ of $\Wab$ and the corresponding subalgebra $\aa_p\in\mathcal I_{ab}^\s$.
% be the abelian $B_0$-stable subalgebra corresponding to $w_p$ under the map defined in Proposition \ref{imrn}. 
We call $\aa_p$ the {\sl special $B_0$-stable abelian subalgebra}. 
%\footnote{Forse dovremmo controllare se $\aa_p$ \`e la stessa sottoalgebra che produce Panyushev come controesempio alla sfericit\`a.}

For each component $\Sigma$ of $\Pi_0$, we let $\D(\Sigma)$ be the root subsystem of $\Phi_0$ generated by $\Sigma$. As shown in \cite[Lemma 5.7]{CMP}, there is a unique simple root $\a_\Sigma \in \Sigma$ which is connected to $\a_p$. If moreover $\theta_\Sigma$ is the highest root in $\Phi(\Sigma)$, then $[\theta_\Sigma:\a_\Sigma]=1$, therefore $(\grS, \gra_\grS)$ is a Hermitian pair. It is then clear that $\gamma \in \D(\Sigma)$ is orthogonal to $\a_p$ if and only if  $[\gamma:\a_\Sigma]=0$.
Following Section \ref{4}, we denote by $\Phi(\Sigma)^+_1$ the set of roots in $\Phi(\Sigma)^+$ that have $\a_\Sigma$ in their support. %\footnote{Ho cambiato $\Phi^+_1(\Sigma)$ in $\Phi(\Sigma)^+_1$, mi sembra pi\`u chiaro se no sembra che sia un sottoinsieme di $\Phi_1^+$.} 

Set $\mathcal C_{\s} =\{\a\in\Dap \st \a_p+k\d-\a\in\Dap\}$ and define
$$\mathcal C^1_{\s}= \mathcal C_{\s} \cap \Da_1 = \{\a\in\mathcal C_\s \st  [\a:\a_p]=1\}.$$
Notice that $\mathcal C^1_{\s}\subset \Dap_{re}$.

\begin{lemma}\label{N(w*p)} \ 
\begin{itemize}
	\item[i)] $N(w_p)=\bigcup_{\Sigma}\{\gamma+\a_p\st \gamma\in\Phi(\Sigma)^+_1\} \cup \{\a_p\}$, where $\grS$ ranges  among the components of $\Pi_0$;
	\item[ii)] $N(w_p)=\mathcal C^1_{\s}$;
	\item[iii)] The map $\Upsilon:\mathcal C^1_{\s} \to \bigcup_\Sigma \Phi(\Sigma)^+_1 \cup \{0\}$ mapping $\eta$ to $\eta-\a_p$ is an order preserving bijection, where $\grS$ varies among the components of $\Pi_0$;
	\item[iv)] If $\gamma,\eta \in \mathcal C^1_{\s}\setminus \{\a_p\}$, then $(\Upsilon(\gamma),\Upsilon(\eta))=(\gamma,\eta)$.
	\item[v)] If $\eta \in \mathcal C^1_{\s}\setminus \{\a_p\}$, then $(\gra_p, \Upsilon(\eta)) = -(\gra_p,\eta)$.
	% \footnote{Ho aggiunto questo che mi sembrava venisse usato pi\'u volte nel seguito.}
\end{itemize}
\end{lemma}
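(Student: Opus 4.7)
The keystone for the whole proof is the identity $\langle \gamma,\alpha_p^\vee\rangle = -1$ for every $\gamma\in\Phi(\Sigma)^+_1$ and every component $\Sigma$ of $\Pi_0$. The plan is first to establish this, then to derive (i) and (ii) by a mixture of length count and direct verification, and finally to obtain (iii)--(v) as formal consequences. For the identity itself, I would write $\gamma=\alpha_\Sigma+\gamma'$ with $\gamma'$ supported on $\Sigma\setminus\{\alpha_\Sigma\}\subset\alpha_p^\perp$ (possible since $[\theta_\Sigma:\alpha_\Sigma]=1$), reducing the computation to $\langle\alpha_\Sigma,\alpha_p^\vee\rangle$; because $\alpha_p$ is long and $\alpha_\Sigma$ is the unique simple root of $\Sigma$ adjacent to $\alpha_p$ in the affine Dynkin diagram, a short case analysis over the possible edge types (single, double, triple) always yields $-1$.

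For (i), I would write $u = w_{0,\alpha_p}w_0$ and use standard parabolic theory to get $\ell(u) = \ell(w_0)-\ell(w_{0,\alpha_p}) = \sum_\Sigma|\Phi(\Sigma)^+_1|$, where the last equality uses $\Pi_0\cap\alpha_p^\perp = \bigcup_\Sigma(\Sigma\setminus\{\alpha_\Sigma\})$. Since $w_{0,\alpha_p}$ fixes $\alpha_p$ and $w_0(\alpha_p)$ retains the positive $\delta'$-coefficient of $\alpha_p$, we have $u^{-1}(\alpha_p)\in\Dap$, hence $\ell(w_p)=\ell(u)+1$ matches the cardinality of the set on the right-hand side of (i). It then suffices to check that each such root lies in $N(w_p)$: $w_p^{-1}(\alpha_p)=-w_0(\alpha_p)<0$, while for $\eta=\alpha_p+\gamma$ with $\gamma\in\Phi(\Sigma)^+_1$ the keystone identity forces $s_p(\eta)=\gamma$, and then $w_{0,\alpha_p}$ preserves $\Phi(\Sigma)^+_1$ (being generated by reflections that fix the $\alpha_\Sigma$-grading) while $w_0$ sends this positive root to a negative one.

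For (ii), the inclusion of the right-hand side of (i) into $\Cuno$ is a direct check on the two defining conditions: in each case $k\delta-\gamma$ is a positive affine root (the imaginary root $k\delta$ if $\gamma=0$, and a positive real root otherwise). For the reverse, given $\alpha\in\Cuno$, I would write $\alpha=\alpha_p+\beta$ with $\beta$ a non-negative combination of $\Pi_0$; the condition $k\delta-\beta\in\Dap$ together with $[\beta:\alpha_p]=0$ forces either $\beta=0$ or $\beta\in\Phi_0^+$. In the latter case $\beta$ lies in a unique component $\Phi(\Sigma)^+$, and if $\alpha_\Sigma$ were not in its support then $\beta$ would be orthogonal to $\alpha_p$ and would generate together with $\alpha_p$ an orthogonal direct sum of root subsystems, precluding $\alpha_p+\beta$ from being a root---a contradiction. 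Once (i) and (ii) are in place, (iii) is formal: the bijection $\eta\mapsto\eta-\alpha_p$ is tautological from (i), and subtracting a fixed vector preserves the dominance order.

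Finally, (iv) and (v) follow from the keystone identity applied to $\eta=\alpha_p+\gamma\in\Cuno\setminus\{\alpha_p\}$, which gives $\langle\eta,\alpha_p^\vee\rangle = 2+\langle\gamma,\alpha_p^\vee\rangle = 1$, whence $(\alpha_p,\eta)=\tfrac12(\alpha_p,\alpha_p)$. Expanding $(\gamma-\alpha_p,\eta-\alpha_p)$ and using this identity symmetrically in $\gamma$ and $\eta$ immediately gives (iv), while the same identity applied once gives (v). The hardest step will be the reverse inclusion in (ii): one must extract from a single affine positivity condition the finite-system information that $\alpha-\alpha_p$ is an actual root of $\Phi_0$ whose support touches $\alpha_\Sigma$, and this is exactly where the combinatorics of real roots of $\widehat L(\g,\s)$ meets the orthogonal-direct-sum structure on $\{\pm\alpha_p\}\cup\Phi(\Sigma\setminus\{\alpha_\Sigma\})$.
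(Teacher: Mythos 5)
Your proposal is correct, and for parts i), iii), iv), v) it runs essentially parallel to the paper: the same keystone computation $\langle \grg,\a_p^\vee\rangle=\langle\a_\grS,\a_p^\vee\rangle=-1$ for $\grg\in\Phi(\grS)^+_1$ (using $[\th_\grS:\a_\grS]=1$ and that $\a_p$ is long) drives everything, and your length count $\ell(w_p)=\ell(w_{0,\a_p}w_0)+1=1+\sum_\grS|\Phi(\grS)^+_1|$ plus the direct verification that each proposed root is an inversion is just a repackaging of the paper's use of $N(w_{0,\a_p}w_0)=\bigcup_\grS\Phi(\grS)^+_1$ and the recursion $N(s_pw_{0,\a_p}w_0)=\{\a_p\}\cup s_p\,N(w_{0,\a_p}w_0)$. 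The genuine divergence is in ii). The paper proves the inclusion $\Cuno\subseteq N(w_p)$ abstractly: from $w_p(\a_p)=k\d+\a_p$ it gets $N(w_ps_p)=N(w_p)\cup\{k\d+\a_p\}$, and then uses biconvexity of this set together with the fact that $w_p\in\Wab$ (so $N(w_p)\subset\Da_1$) to force $\eta\in N(w_p)$ for any $\eta\in\Cuno\setminus\{\a_p\}$. You instead compare $\Cuno$ directly with the explicit set of i): writing $\a=\a_p+\beta$ with $\beta$ a nonnegative combination of $\Pi_0$, the condition $k\d-\beta\in\Dap$ forces $\beta=0$ or $\beta\in\Phi_0^+$, and the case $[\beta:\a_\grS]=0$ is excluded because the subsystem of $\Da$ supported on $\{\a_p\}\cup(\Pi_0\cap\a_p^\perp)$ is an orthogonal direct sum, so $\a_p+\beta$ cannot be a root. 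Note that mere orthogonality of $\a_p$ and $\beta$ would not suffice here (orthogonal roots can sum to a root in non-simply-laced systems); what makes your step valid is precisely the direct-sum/connected-support argument you invoke, so there is no gap. The trade-off: the paper's route is shorter once the machinery around $w_p\in\Wab$ from \cite{CMPP} is taken for granted and does not need the explicit description of $\Cuno$ in advance, while yours is more elementary and self-contained root combinatorics, at the cost of leaning on i) and on the dichotomy for $k\d-\beta$ being a root of $\widehat L(\g,\s)$.
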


\begin{proof} 
i) It is well known that $N(w_{0,\a_p}w_0)=\bigcup_{\Sigma}\Phi(\Sigma)^+_1$. If  $\gamma\in\Phi(\Sigma)^+_1$, then $\langle \gamma,\a_p^\vee \rangle = \langle \a_\Sigma,\a_p^\vee\rangle$, and since $\a_p$ is long. we have   $\langle \a_\Sigma,\a_p^\vee \rangle =-1$. It follows that $s_p(\gamma)=\gamma+\a_p\in\Dap$, hence
$$
N(w_p)=N(s_pw_{0,\a_p}w_0)=\{\a_p\}\cup s_p(N(w_{0,\a_p}w_0)),$$
and the claim follows.

ii) Clearly $\a_p\in \mathcal C^1_{\s}$. Moreover, if $\gamma \in\Phi(\Sigma)^+_1$, then $k\d+\a_p-(\gamma+\a_p)=k\d-\gamma\in\Dap$. It follows that $N(w_p)\subseteq \mathcal C^1_{\s}$.
Since $w_p(\a_p)=k\d+\a_p$, it follows that $\ell(w_ps_p)=\ell(w_p)+1$. This implies that $N(w_ps_p)=N(w_p)\cup\{k\d+\a_p\}$.  Let $\eta\ne\a_p$ be in $ \mathcal C^1_{\s}$ so that there is $\beta$  such that $\eta+\beta=k\d+\a_p$. By biconvexity of $N(w_ps_p)$, exactly one between $\eta$ and $\beta$ is in $N(w_p)$. Since $[\eta:\a_p]=1$ and $[\beta:\a_p]=2$, it follows from the fact that $w_p\in\Wab$ that $\eta\in N(w_p)$. Thus $\mathcal C^1_{\s}\subseteq N(w_p)$, and the claim follows.

iii) Follows by i) and ii).

iv) Since $\a_p$ is long, if $\a\in\Phi(\Sigma)^+_1$ then $(\a,\a_p)=-\frac{(\a_p,\a_p)}{2}$. Therefore, if $\grg, \eta \in \mathcal C^1_{\s}\setminus \{\a_p\}$, we have 
$$
(\gamma,\eta)=(\gamma-\a_p,\eta-\a_p)+(\gamma-\a_p,\a_p)+(\eta-\a_p,\a_p)+(\a_p,\a_p)=(\gamma-\a_p,\eta-\a_p).
$$
The second claim follows as well, since $(\Upsilon(\eta), \Upsilon(\eta)) = (\eta, \eta)$.

v) Since $\gra_p$ is long and $\Upsilon(\eta) \in \Phi(\grS)^+_1$, it must be $\langle \Upsilon(\eta),\gra_p^\vee \rangle = -1$. Therefore $\langle \eta,\gra_p^\vee \rangle = 1$, and it follows $(\gra_p, \Upsilon(\eta)) = -(\gra_p,\eta)$.
\end{proof}

\begin{lemma}\label{affine type}
 Let $\{\eta_1,\dots,\eta_t\}$ be an orthogonal set of real roots, let $\eta_{t+1}$ be a real root such that $(\eta_i,\eta_{t+1})<0$ for all $i\leq t$, and set $A=(\langle \eta_j, \eta_i^\vee\rangle)_{i,j= 1, \dots, t+1}$. Then $A$ is a generalized Cartan matrix of finite or affine type.
\end{lemma}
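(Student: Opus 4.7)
The plan is to check that $A$ satisfies the axioms of a generalized Cartan matrix and then exhibit a positive semi-definite symmetrization of $A$; by Kac's classification (\cite[Theorem~4.3]{Kac}), this forces $A$ to be of finite or affine type.

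First I would verify the GCM axioms. The diagonal entries are $A_{ii}=\langle\eta_i,\eta_i^\vee\rangle=2$. For distinct $i,j\le t$ we have $A_{ij}=0$ by the orthogonality of the $\eta_i$, while the entries $A_{i,t+1}$ and $A_{t+1,i}$ with $i\le t$ are strictly negative by hypothesis. Integrality holds because each $\eta_i$ is a real root, so its coroot $\eta_i^\vee$ comes from an $\mathfrak{sl}_2$-triple acting on $\widehat L(\g,\s)$ with integer weights on every root space. Finally, for $i\ne j$ both $A_{ij}$ and $A_{ji}$ are positive multiples of $(\eta_i,\eta_j)$, so they vanish simultaneously; this gives the last GCM axiom.

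Next I would symmetrize $A$. Setting $d_i=(\eta_i,\eta_i)/2$, which is positive because real roots have positive norm, and $D=\mathrm{diag}(d_1,\ldots,d_{t+1})$, one computes
\[
(DA)_{ij}=d_i\,\langle \eta_j,\eta_i^\vee\rangle=(\eta_i,\eta_j),
\]
so $DA$ is symmetric and coincides with the Gram matrix $G$ of $\eta_1,\ldots,\eta_{t+1}$. To conclude I would invoke the fact that every real root of $\widehat L(\g,\s)$ lies in the real subspace $\h_0^*\oplus\mathbb R\d$ of $\ha^*$, on which the restriction of the invariant form is positive semi-definite with radical $\mathbb R\d$; hence $G$, being the Gram matrix of vectors in this subspace, is positive semi-definite. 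Thus $A$ is a symmetrizable GCM with positive semi-definite symmetrization, so by \cite[Theorem~4.3]{Kac} it is of finite or affine type.

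There is no real obstacle here: the key content is the positivity statement for the form on $\h_0^*\oplus\mathbb R\d$, after which the classification of symmetrizable GCMs of affine or finite type closes the argument; everything else is bookkeeping of the GCM axioms.
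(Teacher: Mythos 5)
Your argument follows the paper's own proof almost verbatim: verify the GCM axioms, symmetrize by the diagonal matrix of (half) squared norms so that the symmetrization becomes the Gram matrix $\bigl((\eta_i,\eta_j)\bigr)$, and conclude from the positive semi-definiteness of the invariant form on the real span of the affine roots. The one point you leave out, and which the paper states explicitly, is the indecomposability of $A$. Kac's trichotomy finite/affine/indefinite is defined, and Theorem 4.3 is stated, only for indecomposable matrices, and the implication you invoke -- ``symmetrizable GCM with positive semi-definite symmetrization, hence of finite or affine type'' -- fails without indecomposability: the direct sum of two affine GCMs is a symmetrizable GCM whose symmetrization is positive semi-definite (of corank $2$), and it is of neither finite nor affine type.

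The fix is immediate from a hypothesis you already recorded: since $A_{i,t+1}$ and $A_{t+1,i}$ are strictly negative for all $i\leq t$, the node corresponding to $\eta_{t+1}$ is joined to every other node, so the Dynkin diagram of $A$ is connected and $A$ is indecomposable. With that in place you still have to rule out the indefinite case, either as the paper does -- cite \cite[Lemma 4.5]{Kac} and note that the Gram matrix has rank at least $t$ because $\eta_1,\dots,\eta_t$ are pairwise orthogonal of nonzero norm, so it is positive semi-definite of corank at most $1$ -- or directly: in the indefinite case there is a vector $u>0$ with $Au<0$, and then $u^{T}DAu<0$, contradicting positive semi-definiteness. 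Apart from this last step, which is a one-line repair, your proof is the paper's proof.
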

 
\begin{proof}
The fact that $A$ is a generalized Cartan matrix \cite[Section 1.1] {Kac} is clear. It is also symmetrizable: setting $D=\mathrm{diag}(||\eta_1||^2,\ldots,||\eta_{t+1}||^2)$ we have that $DA=2((\eta_i,\eta_j))$, which is symmetric. Since $(\eta_i, \eta_{t+1})\ne 0$ for all $i\leq t$, $DA$ is an indecomposable matrix. Therefore it is enough to check that $DA$ is of finite or affine type. By \cite[Lemma 4.5]{Kac}, we need to check that $DA$ is positive semi-definite of corank less than or equal to $1$. Since $\eta_1,\ldots, \eta_t$ are orthogonal $DA$ has rank at least $t$. It is clear that, given a positive semi-definite symmetric bilinear  form $(\cdot,\cdot)$ on a vector space $V$ and a set of vectors $R=\{v_1,\ldots,v_k\}$ in $V$, the matrix $((v_i,v_j))$ is positive semi-definite. On the other hand, since $\Da$ is an affine system, the invariant form $(\cdot,\cdot)$ is positive semi-definite, and being $DA=2((\eta_i,\eta_j))$ it follows that $DA$ is positive semi-definite.
\end{proof}

Let $\calS \subset \calC^1_\s \setminus \{\gra_p\}$ be an orthogonal subset of maximal cardinality. Consider the set of roots
$$
\Pi_\calS = \calS \cup \{-\a_p\}
$$
and the matrix $A_\calS = (\langle \eta', \eta^\vee \rangle)_{\eta, \eta' \in \Pi_\calS}$. We will show that $G_0\aa_p$ is not spherical. The following is the key result in this direction. %\footnote{Siccome ci sono varie sottoalgebre non sferiche il pedice $ns$ non mi convinceva molto, anzi direi che i vari sistemi di radici ``non sferici'' dipendano dall'insieme ortogonale considerato}

\begin{proposition}\label{63}
Let $\calS \subset \calC^1_\s \setminus \{\gra_p\}$ be an orthogonal subset of maximal cardinality. Then $|\calS| \le 4$ and $A_\calS$ is a Cartan matrix of affine type.
\end{proposition}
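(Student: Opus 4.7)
The plan is to combine Lemma \ref{affine type} with the combinatorics of Hermitian pairs developed in Section \ref{4}, in four stages.

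First, I would decompose $\calS$ along the connected components of $\Pi_0$. By Lemma \ref{N(w*p)}(i), each $\eta \in \calS$ has the form $\eta = \gamma + \gra_p$ with $\gamma = \Upsilon(\eta) \in \Phi(\Sigma)^+_1$ for a unique $\Sigma \mid \Pi_0$, and roots supported on distinct components are automatically orthogonal. Writing $\calS = \bigsqcup_\Sigma \calS_\Sigma$ accordingly, maximality of $\calS$ forces each $\Upsilon(\calS_\Sigma)$ to be a maximal orthogonal subset of $\Phi(\Sigma)^+_1$ inside the Hermitian pair $(\Sigma,\gra_\Sigma)$. By Remark \ref{oss:max-cardinality} its cardinality is the rank $r_\Sigma$ of the corresponding Hermitian symmetric space, and it consists of long roots of $\Phi(\Sigma)$.

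Second, I would apply Lemma \ref{affine type} to the family $\calS \cup \{-\gra_p\}$. Orthogonality within $\calS$ is automatic. Using that $\gra_p$ is long together with $\langle \Upsilon(\eta),\gra_p^\vee\rangle = -1$ (from the proof of Lemma \ref{N(w*p)}), together with Lemma \ref{N(w*p)}(v), one gets $(-\gra_p,\eta) = -\tfrac12(\gra_p,\gra_p) < 0$ for every $\eta \in \calS$. Hence $A_\calS$ is a generalized Cartan matrix of finite or affine type.

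Third, I would describe the resulting Dynkin diagram. A short computation using $\langle \gamma_j,\gra_p^\vee\rangle = -1$ gives $(\eta_j,\eta_j) = (\gamma_j,\gamma_j)$ (in agreement with Lemma \ref{N(w*p)}(iv)), so the diagram is a star with $-\gra_p$ at the center and $|\calS|$ leaves $\eta_j$, no edges among the leaves; the bond between $-\gra_p$ and $\eta_j$ is controlled by $k_j := (\gra_p,\gra_p)/(\gamma_j,\gamma_j) \in \{1,2,3\}$ according as $\gamma_j$ is long or short in $\Phi$.

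Finally, I would distinguish affine from finite type and bound $|\calS|$ simultaneously by producing a null vector of $A_\calS$. For $v = c_0(-\gra_p) + \sum_j c_j \eta_j$ the system $A_\calS v = 0$ reduces, through the above inner products, to $c_j = \tfrac12 c_0 k_j$ together with the single scalar identity
\[
\sum_{j=1}^{|\calS|} k_j \;=\; 4.
\]
Once this identity is established, $A_\calS$ has corank at least $1$, so Lemma \ref{affine type} forces it to be of affine type; since $k_j \ge 1$, one then gets $|\calS| \le \sum_j k_j = 4$ at the same time. I would verify the identity by inspecting the admissible configurations: for each affine diagram $\widehat\Pi$ with $\gra_p$ long and non-complex, compute the contribution $r_\Sigma k_\Sigma$ from every component $\Sigma \mid \Pi_0$ using Table \ref{table:hermitian-pairs} together with the bond types between $\gra_\Sigma$ and $\gra_p$ read off $\widehat\Pi$, and check case by case that $\sum_\Sigma r_\Sigma k_\Sigma = 4$.

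The main obstacle is this final enumeration. A more conceptual derivation would try to read the identity $\sum_\Sigma r_\Sigma k_\Sigma = 4$ directly from the fundamental relation $\delta = \sum_i a_i \gra_i$ of $\widehat L(\g,\sigma)$ after re-expressing $\delta$ in terms of $\Pi_\calS$, but some amount of diagram-by-diagram bookkeeping seems unavoidable.
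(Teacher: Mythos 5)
Your argument is sound and reaches the conclusion by a genuinely different organization of the case analysis. Both proofs begin the same way: by Lemma \ref{N(w*p)} every $\eta\in\calC^1_\s\setminus\{\a_p\}$ satisfies $\langle\eta,\a_p^\vee\rangle=1$, so Lemma \ref{affine type} applies and $A_\calS$ is of finite or affine type, with diagram a star centred at $-\a_p$. From there the paper reads off $|\calS|\le 4$ immediately from the bound on the degree of a node in a finite or affine diagram, and then excludes the finite-type stars ($D_4$, $A_3$, $B_3$, $A_2$, $C_2$, $G_2$) one at a time: $D_4$ by producing an extra root of $\calC^1_\s$ orthogonal to $\calS$ (contradicting maximality), and the remaining ones by transferring the edge pattern of $\Pi_\calS$ back to $\Pia$ via $\langle -\a_p,\eta_i^\vee\rangle=\langle\a_p,\a_{\Sigma_i}^\vee\rangle$ and the rank data of Table \ref{table:hermitian-pairs}, concluding that $\Pia$ would be of finite type or that $\s$ would not be an involution. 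You instead aim at the single identity $\sum_j k_j=\sum_\Sigma r_\Sigma k_\Sigma=4$, which (as your null-vector computation correctly shows) yields affineness and $|\calS|\le 4$ in one stroke; this identity is true, and is essentially the content of Remark \ref{oss:somma4} read in reverse. What your route buys is a cleaner logical structure; what it costs is that the decisive step is deferred to an enumeration of all pairs $(\Pia,\a_p)$ with $\a_p$ long, non-complex and of label $2/k$ --- a noticeably longer list than the paper's half-dozen candidate star diagrams --- and that enumeration is only announced, not carried out. Until it is executed, the exclusion of finite type (the only nontrivial part of the statement) has not actually been proved.

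Two small corrections for when you do carry it out. First, $k_j=(\a_p,\a_p)/(\gamma_j,\gamma_j)$ need not lie in $\{1,2,3\}$: in twisted types $\Da$ can have three root lengths, and for $\Pia=A_2^{(2)}$ (the pair $\mathfrak{sl}_3\supset\mathfrak{so}_3$) one gets a single component with $r_\Sigma=1$ and $k_\Sigma=4$; this is exactly the diagram $A_2^{(2)}$ in Table \ref{diagramsns} and is consistent with $\sum_\Sigma r_\Sigma k_\Sigma=4$, but it is not covered by the trichotomy ``long or short in $\Phi$''. Second, be careful that ``maximal cardinality'' (not merely ``maximal under inclusion'') is what forces each $\Upsilon(\calS_\Sigma)$ to consist of $r_\Sigma$ roots, all long in $\Phi(\Sigma)$, via Remark \ref{oss:max-cardinality}; your decomposition argument does use this correctly, but the distinction matters in the non-simply-laced components.
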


\begin{proof}
%\footnote{Ho eliminato qualche diagramma da questa dimostrazione, in vari casi mi sembrava pi\`u chiaro dare semplicemente la coppia Hermitiana.}
Set $\calS = \{\eta_1, \ldots, \eta_t\}$. By Lemma \ref{affine type}, $A_\calS$ has to be either of finite or of affine type. By a slight abuse of notation we denote by $\Pi_\calS$ the  corresponding Dynkin diagram. This diagram has $t+1$ nodes with $t$ nodes connected only to the remaining node corresponding to $\a_p$. This immediately implies that $t\le 4$, and since $\a_p$ is long, the node connected to all other nodes corresponds to a long simple root.

If $t=4$ then the diagram is of type $D_4^{(1)}$, hence it is affine. If $t=3$ the only possibilities are $D_4$ or $B_3^{(1)}$. If $\Pi_\calS$ is of type $D_4$, then $\eta=k\d-(\eta_1+\eta_2+\eta_3-2\a_p)$ is a root in $\Da$ with $[\eta:\a_p]=1$. Set $\beta=\eta_1+\eta_2+\eta_3-\a_p\in\Da$. Then $\be\in\Da$, $[\be:\a_p]=2$, and $\eta+\be=k\d+\a_p$. It follows that $\eta\in \mathcal C^1_{\s}$. Being $(\eta,\eta_i)=0$ for $i=1,2,3$, we see that $\{\eta_1,\eta_2,\eta_3\}$ is not a set of maximal cardinality in $\mathcal C^1_{\s}\setminus \{\a_p\}$. This excludes the possibility that $\Pi_\calS$ is of type $D_4$, so $\Pi_\calS$ is of affine type.

If $t=2$ then $\Pi_\calS$ can be only of type $A_3$, $B_3$, $G_2^{(1)}$, $D_3^{(2)}$. By Lemma \ref{N(w*p)} together with Remark \ref{oss:max-cardinality}, $\{\Upsilon(\eta_1),\Upsilon(\eta_2)\}$ is an orthogonal subset of maximal cardinality in $\bigcup_{\Sigma}\Phi(\Sigma)^+_1$, and both $\Upsilon(\eta_1)$ and $\Upsilon(\eta_2)$ are long roots in the respective components of $\Pi_0$. In particular, $\Pi_0$ contains at most two components.

Suppose that $t=2$ and that $\Pi_0 = \Sigma_1 \cup \Sigma_2$ is the union of two components, then by Remark \ref{oss:max-cardinality} the Hermitian symmetric spaces corresponding to $(\Sigma_1, \a_{\Sigma_1})$ and $(\Sigma_2, \a_{\Sigma_2})$ have both rank 1, hence $\Sigma_1$, $\Sigma_2$ are both of type $A$, and $\a_{\grS_i}$ is an extremal root in $\grS_i$ (see Table \ref{table:hermitian-pairs}).
%\footnote{Qui stiamo usando i ranghi delle variet\'a simmetriche Hermitiane, li ho richiamati in una tabella.}
%$$
%\begin{tikzpicture}[double distance=2pt,>=stealth] 
%\matrix [matrix of math nodes,row sep=1cm,column sep=.7cm]
%{%|(11)|\circ&&&[-1cm]&[-0.5cm]&\\
%|(22)|\circ&|(23)|\circ&|(24)|\circ& |(25)|\bullet\\
%};
%\begin{scope}[every node/.style={midway,auto},thick]
%\draw (22) -- (23);
%\draw[dotted] (23) -- (24);
%\draw (24)--(25);
%\end{scope}
%\end{tikzpicture}
%$$
%where the roots $\a_{\Sigma_i}$ are denoted by black nodes, here and in the following.
Since $\gra_{\grS_i}$ and $\Upsilon(\eta_i)$ are both long in $\Sigma_i$, by Lemma \ref{N(w*p)} v) it follows that
\begin{equation}\label{sameedges}
\langle -\a_p,\eta_i^\vee \rangle = \langle \a_p,\Upsilon(\eta_i)^\vee\rangle = \langle \a_p,\a_{\Sigma_i}^\vee\rangle.
\end{equation}
This means that $\a_p$ is connected to $\a_{\Sigma_i}$ in $\Pia$ with the same number of edges that connect $-\a_p$ to $\eta_i$ in $\Pi_\calS$. If $\Pi_\calS$ is of type $A_3$, it follows that $\Pia$ is of type $A$, of the shape
$$
\begin{tikzpicture}[double distance=2pt,>=stealth] 
\matrix [matrix of math nodes,row sep=1cm,column sep=.7cm]
{%|(11)|\circ&&&[-1cm]&[-0.5cm]&\\
|(1)|\circ&|(2)|\circ&|(3)|\circ& |(4)|\bullet&|(5)|\circ&|(6)|\bullet&|(7)|\circ&|(8)|\circ&|(9)|\circ\\
};
\begin{scope}[every node/.style={midway,auto},thick]
\draw (1) -- (2);
\draw[dotted] (2) -- (3);
\draw (3)--(4);
\draw (4)--(5);
\draw (5)--(6);
\draw (6) -- (7);
\draw[dotted](7) -- (8);
\draw (8) -- (9);
\end{scope}
\end{tikzpicture}
$$
(where the roots $\a_{\Sigma_i}$ are denoted by black nodes), which is absurd since $\Pia$ is not of finite type. If $\Pi_\calS$ is of type $B_3$, then $\Pia$ is of the shape
$$
\begin{tikzpicture}[double distance=2pt,>=stealth] 
\matrix [matrix of math nodes,row sep=1cm,column sep=.7cm]
{%|(11)|\circ&&&[-1cm]&[-0.5cm]&\\
|(1)|\circ&|(2)|\circ&|(3)|\circ& |(4)|\bullet&|(5)|\circ&|(6)|\bullet&|(7)|\circ&|(8)|\circ&|(9)|\circ\\
};
\begin{scope}[every node/.style={midway,auto},thick]
\draw (1) -- (2);
\draw[dotted] (2) -- (3);
\draw (3)--(4);
\draw (4)--(5);
\draw[double,->] (5)--(6);
\draw (6) -- (7);
\draw[dotted](7) -- (8);
\draw (8) -- (9);
\end{scope}
\end{tikzpicture}
$$ 
The only affine diagrams of this shape are $F_4^{(1)}$ and $E_6^{(2)}$. If $\Pia$ is of type $F_4^{(1)}$ then $\a_p$ has label 3, while if $\Pia$ is of type $E_6^{(2)}$, then $\a_p$ has label 2. Since in both cases the corresponding automorphism $\s$ is not an involution, it follows that $\Pi_0$ must contain a unique component.

Suppose that $t=2$ and that $\Pi_0$ is connected. By Lemma \ref{N(w*p)} together with Remark \ref{oss:max-cardinality}, it follows that $\{\Upsilon(\eta_1),\Upsilon(\eta_2)\}$ is an orthogonal subset of maximal cardinality in $\Phi(\Pi_0)^+_1$, and that $||\eta_1|| = ||\eta_2||$. Therefore, if $\Pi_\calS$ is not affine, it must be of type $A_3$. Suppose that this is the case; then by (\ref{sameedges}) $\a_p$ is connected to $\a_{\Pi_0}$ by a single edge. On the other hand, by Remark \ref{oss:max-cardinality} again, the symmetric space corresponding to the Hermitian pair $(\Pi_0, \gra_{\Pi_0})$ has rank 2. Up to an automorphism of $\Pi_0$, by Table \ref{table:hermitian-pairs} the possibilities for the pair $(\Pi_0, \gra_{\Pi_0})$ are the following: $(A_n,\a_2)$, $(B_n,\a_1)$, $(D_n,\a_1)$, $(D_5,\a_5)$, $(E_6,\a_1)$. But then by obvious considerations it follows that $\Pia$ also of finite type, a contradiction.

It remains to check the case when $t=1$. If $\Pi_\calS$ is of finite type, then it is of type $A_2$, $C_2$, or $G_2$. Since $\{\Upsilon(\eta_1)\}$ is a set of orthogonal roots of maximal cardinality in $\bigcup_{\Sigma}\Phi(\Sigma)^+_1$, it follows that $\Pi_0$ is connected, and the Hermitian symmetric variety corresponding to the pair $(\Pi_0, \a_{\Pi_0})$ has rank one, therefore $\Pi_0$ is of type $A$ and $\a_{\Pi_0}$ is an extremal root in $\Pi_0$.
%$$
%\begin{tikzpicture}[double distance=2pt,>=stealth] 
%\matrix [matrix of math nodes,row sep=1cm,column sep=.7cm]
%{%|(11)|\circ&&&[-1cm]&[-0.5cm]&\\
%|(22)|\circ&|(23)|\circ&|(24)|\circ& |(25)|\bullet.\\
%};
%\begin{scope}[every node/.style={midway,auto},thick]
%\draw (22) -- (23);
%\draw[dotted] (23) -- (24);
%\draw (24)--(25);
%\end{scope}
%\end{tikzpicture}
%$$
By (\ref{sameedges}), $\a_p$ is connected to $\a_{\Pi_0}$ by  the same number of edges that connect $-\a_p$ to $\eta_1$. Therefore, if $\Pi_\calS$ is of finite type, then $\Pia$ would be of finite type as well, which is absurd.
\end{proof}

For the reader's convenience we list here all possible diagrams for the affine root system $\Pi_\calS$, with the corresponding labels.

\begin{table}[h]
\begin{center}
\addtolength{\tabcolsep}{-1pt}
\begin{tabular}{ccccc}
\begin{tikzpicture}[double distance=2pt,>=stealth] 
\matrix [matrix of math nodes,row sep=.5cm,column sep=.5cm]
{&|(11)|\circ&\\
|(21)|\circ&|(22)|\circ&|(23)|\circ\\
&|(31)|\circ&\\
};
\node [above] at (11) {\tiny $1$};
\node [above] at (11) {\tiny $1$};
\node [below] at (21) {\tiny $1$};
\node(x) [below] at (22) {\tiny $2$};
\node [below] at (23) {\tiny $1$};
\node [below] at (31) {\tiny $1$};
\begin{scope}[every node/.style={midway,auto},thick]
\draw (11) -- (22);
\draw (21) -- (22);
\draw (31)--(x);
\draw (23)--(22);
\end{scope}
\end{tikzpicture}
&
\begin{tikzpicture}[double distance=2pt,>=stealth] 
\matrix [matrix of math nodes,row sep=.5cm,column sep=.5cm]
{&|(11)|\circ&\\
|(21)|\circ&|(22)|\circ&|(23)|\circ\\
&|(31)|&\\
};
\node [above] at (11) {\tiny $1$};
\node [below] at (21) {\tiny $1$};
\node [below] at (22) {\tiny $2$};
\node [below] at (23) {\tiny $2$};
\node [below] at (31) {\phantom{\tiny $1$}};
\begin{scope}[every node/.style={midway,auto},thick]
\draw (11) -- (22);
\draw (21) -- (22);
\draw[double,->] (22)--(23);
\end{scope}
\end{tikzpicture}
&
\begin{tikzpicture}[double distance=2pt,>=stealth] 
\matrix [matrix of math nodes,row sep=.5cm,column sep=.5cm]
{&|(11)|&\\
|(21)|\circ&|(22)|\circ&|(23)|\circ\\
&|(31)|&\\
};
\node [below] at (21) {\tiny $1$};
\node [below] at (22) {\tiny $1$};
\node [below] at (23) {\tiny $1$};
\node [below] at (31) {\phantom{\tiny $1$}};
\begin{scope}[every node/.style={midway,auto},thick]
\draw [double,<-] (21) -- (22);
\draw[double,->] (22)--(23);
\end{scope}
\end{tikzpicture}
& 
\begin{tikzpicture}[double distance=2pt,>=stealth] 
\matrix [matrix of math nodes,row sep=.5cm,column sep=.5cm]
{&|(11)|&\\
|(21)|\circ&|(22)|\circ&|(23)|\circ\\
&|(31)|&\\
};
\node [below] at (21) {\tiny $1$};
\node[below] at (23) {\tiny $3$};
\node [below] at (22) {\tiny $2$};
\node [below] at (31) {\phantom{\tiny $1$}};
\begin{scope}[every node/.style={midway,auto},thick]
\draw(21) -- (22);
\draw(6pt,3pt)-- +(11pt,0);
\draw(22) -- (23);
\draw(6pt,-3pt)-- +(11pt,0);
\draw(21pt,0)--(17pt,-3pt);
\draw(21pt,0)--(17pt,3pt);
\end{scope}
\end{tikzpicture}
& 
\begin{tikzpicture}[double distance=2pt,>=stealth] 
%\matrix [matrix of math nodes,row sep=.5cm,column sep=.5cm]
%{&|(11)|&\\
%|(21)|\circ\,&|(22)|\circ\\
%&|(31)|&\\
%};
\matrix [matrix of math nodes,row sep=.5cm,column sep=.5cm]
{
|(21)|\circ\,&|(22)|\circ\\
&|(31)|&\\
};
\node[below] at (21) {\tiny $2$};
\node [below] at (22) {\tiny $1$};
\node [below] at (31) {\phantom{\tiny $1$}};
\begin{scope}[every node/.style={midway,auto},thick]
\draw[double](21) -- (22);
\draw(-9pt,7pt) -- +(10pt,0);
\draw(-9pt,14pt) -- +(10pt,0);
\draw(-15.5pt,10.5pt) -- (-9pt,7pt);
\draw(-15.5pt,10.5pt) -- (-9pt,14pt);
\end{scope}
\end{tikzpicture} \\
$D_4^{(1)} \hspace{35pt}$ & $B_3^{(1)} \hspace{35pt}$ & $D_3^{(2)} \hspace{35pt}$ & $G_2^{(1)} \hspace{35pt}$ & $A_2^{(2)} \hspace{35pt}$ \\
& & & & \\
\end{tabular} 
\addtolength{\tabcolsep}{1pt}
\caption[font={scriptsize,italic}]{Affine root systems corresponding to non-spherical orbits in $G_0 \aa_p$} \label{diagramsns}
\end{center}
\end{table}

\begin{remark}	\label{oss:somma4}
%\footnote{Ho isolato questa osservazione che viene usata molto spesso.}
Let $\Xi$ be one of the affine diagrams of Table \ref{diagramsns}, let $k_\Xi$ be the integer such that $\Xi$ is of type $X^{(k_\Xi)}_{r}$ ($k_\Xi \in \{1,2\}$) and let $\a \in \Xi$ be the unique long simple root which is connected to all other simple roots. Let $\xi\in\Xi$ and let $a_{\Xi,\xi}$ be the corresponding label in $\Xi$, then for all $\xi \in \Xi$, we have $k_\Xi a_{\Xi,\xi} = |\langle \a, \xi^\vee \rangle|$. In particular,  the equality
$$k_\Xi(\sum_{\xi \neq \a} a_{\Xi,\xi}) = 4$$ 
holds. If $\calS$ is an orthogonal subset of $\calC^1_\s \setminus \{\gra_p\}$ of maximal cardinality and $\xi \in \Pi_\calS$, then we will denote $k_{\Pi_\calS}$ and $a_{\Pi_\calS,\xi}$ simply by $k_\calS$ and $a_{\calS,\xi}$.
\end{remark}

%Let $\calS = \{\eta_1, \ldots, \eta_t\}$ be an orthogonal subset of $\calC^1_\s \setminus \{\gra_p\}$ of maximal cardinality. If $\a\in\Pi_\calS$, let $a_{\calS,\a}$ be the corresponding label in $\Pi_\calS$, let $k_\calS$ be the integer such that $\Pi_\calS$ is of type $X^{(k_\calS)}_{r}$ ($k_\calS \in \{1,2\}$). Notice that in all possible cases we have $k_\calS(\sum_{i=1}^t a_{\calS,\eta_i}) = 4$ and $k_\calS a_{-\a_p,\calS} = 2$. 

%Recall that a $\mathfrak{sl}(2)$-triple $\{x,h,y\}$ in $\gog = \gog_0 \oplus \gog_1$ is called normal if $x,y \in \gog_1$ and $h \in \gog_0$.
%\footnote{Ho aggiunto la definizione di sl(2)-tripla normale}
As in  Section \ref{s4}, we fix a weight vector $x_1^\mu \in \gog_1^\mu$ for all $\mu\in\D_1^+$. Recall that $\ov\a_p$ is the lowest weight in $\D_1$ and that $\Psi(\aa_p) = \{-\ov\eta \st \eta \in \calC^1_\s\}$. If $\calS \subset \calC^1_\s \setminus \{\gra_p\}$ is an orthogonal subset, we set
$$x_\calS = \sum_{\eta\in\calS} x_1^{-\ov \eta} \in \aa_p.$$
For all $\eta \in \calS$, we choose $y_1^{\ov\eta} \in \gog_1^{\ov\eta}$  so that $[x_1^{-\ov\eta},y_1^{\ov\eta}]= -\ov\eta^\vee$. Since the weights $\ov\eta$ are strongly orthogonal, setting $y_\calS = \sum_{\eta \in \calS} y_1^{\ov\eta}$ and $h_\calS = -\sum_{\eta \in \calS} \ov\eta^\vee$, we have that $\{x_\calS,h_\calS,y_\calS\}$ is a normal $\mathfrak{sl}(2)$-triple which contains $x_\calS$ as a nilpositive element. 
%\footnote{Ho tirato fuori dalla dimostrazione la definizione della $\mathfrak{sl}(2)$-tripla, e dato in generale la
%definizione di $x_\calS$ che veniva fatto sepratamente in un paio di dimostrazioni.}

\begin{theorem}\label{specialisnotspherical}
Suppose that $\g_0$ is semisimple and $\a_p$ is long and non-complex, and let $\calS$ be an orthogonal subset of $\calC^1_\s \setminus \{\gra_p\}$ of maximal cardinality. Then the orbit $G_0 x_\calS \subset \gog_1$ is not spherical. In particular, $G_0\aa_p$ is not spherical.
%, and $\s$ is an involution of type II.
\end{theorem}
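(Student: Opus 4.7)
The plan is to exhibit inside $\gog_1$ a nonzero $h_\calS$-eigenvector of eigenvalue $4$ and then deduce $\height_1(x_\calS)\ge 4$, which by Panyushev's necessary condition recalled in the introduction ("$G_0 x$ spherical $\Longrightarrow \height_1(x)\le 3$") forces the orbit $G_0 x_\calS$ to be non-spherical. By Proposition \ref{63}, $\Pi_\calS = \calS \cup \{-\gra_p\}$ is the set of simple roots of an affine root system listed in Table \ref{diagramsns}, in which $-\gra_p$ is the unique long simple root joined to every other node; this is precisely the distinguished root denoted $\gra$ in Remark \ref{oss:somma4}.

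The key computation is that $\mu := -\ov\gra_p$ is a weight of $\gog_1$ whose $h_\calS$-eigenvalue equals $4$. First, $-\gra_p$ is a real root of $\widehat L(\gog,\grs)$, and since $s_p=1$ one has $-\gra_p=-\grd'-\ov\gra_p$, so the identification $\widehat L(\gog,\grs)_{-\gra_p}=t^{-1}\otimes \gog_1^{-\ov\gra_p}$ together with Proposition \ref{prop:kac}(i) gives $\gog_1^{-\ov\gra_p}\neq 0$. Using the equality $\langle \ov\gra_p,\ov\eta^\vee\rangle = \langle \gra_p,\eta^\vee\rangle$ recorded in Subsection \ref{ta}, one computes
$$
\mu(h_\calS) \;=\; \sum_{\eta\in\calS}\langle \gra_p,\eta^\vee\rangle.
$$
The entries $\langle -\gra_p,\eta^\vee\rangle$ with $\eta\in\calS$ are off-diagonal in the Cartan matrix $A_\calS$ and hence non-positive; by Remark \ref{oss:somma4} each of them equals $-k_\calS a_{\calS,\eta}$, so $\langle \gra_p,\eta^\vee\rangle = k_\calS a_{\calS,\eta}$. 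Summing and applying the identity $k_\calS\sum_{\xi\neq -\gra_p}a_{\calS,\xi}=4$ from the same remark yields
$$
\mu(h_\calS) \;=\; k_\calS \sum_{\eta\in\calS}a_{\calS,\eta} \;=\; 4,
$$
so $\gog_1^{-\ov\gra_p}\subset \gog_1(4)$ and in particular $\gog_1(4)\neq 0$.

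From here the conclusion is formal. Because $\{x_\calS,h_\calS,y_\calS\}$ is a normal $\mathfrak{sl}_2$-triple, the $\mathfrak{sl}_2$-theory applied to the isotypic decomposition of $\gog$ shows that $\mathrm{ad}(x_\calS)^4$ restricts to an isomorphism $\gog_1(-4)\to \gog_1(4)$; since the target is nonzero, $\mathrm{ad}(x_\calS)^4_{|\gog_1}\neq 0$ and hence $\height_1(x_\calS)\ge 4$. Panyushev's implication then says $G_0 x_\calS$ is not spherical, and since $x_\calS\in\aa_p$ we conclude that $\aa_p$ itself is not $G_0$-spherical. The serious work—the affine-type classification of $\Pi_\calS$ and the normalization of the labels $a_{\calS,\xi}$—is already absorbed in Proposition \ref{63} and Remark \ref{oss:somma4}; the one place one must still be careful is matching the conventions, in particular checking that $\mu = -\ov\gra_p$ really is a $\gog_1$-weight (rather than, say, of $\gog_{-1}$) and that the sign of the off-diagonal Cartan entries comes out correctly so that $\mu(h_\calS)$ is $+4$ and not $-4$; but both are handled cleanly by Subsection \ref{ta} and Proposition \ref{prop:kac}(i).
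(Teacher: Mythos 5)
Your argument is correct and is essentially the paper's own proof: both hinge on Proposition \ref{63} to place $\Pi_\calS$ among the affine diagrams of Table \ref{diagramsns}, deduce $-\ov\gra_p(h_\calS)=\sum_{\eta\in\calS}\langle\gra_p,\eta^\vee\rangle=4$, and conclude via Panyushev's height criterion. The only (immaterial) difference is that you read the identity $\langle\gra_p,\eta^\vee\rangle=k_\calS a_{\calS,\eta}$ directly off Remark \ref{oss:somma4}, whereas the paper rederives it from the isotropy of $k_\calS\sum_{\xi\in\Pi_\calS}a_{\calS,\xi}\,\xi$ (which it needs anyway for the later Corollary \ref{cor:somma-Y(eta)}).
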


\begin{proof}
Let $\calS = \{\eta_1, \ldots, \eta_t\}$ be an orthogonal subset of $\calC^1_\s \setminus \{\gra_p\}$ of maximal cardinality. Since $\sum_{\a\in\Pi_\calS }a_{\calS,\a} \a$ is an isotropic vector, it follows that $k_\calS (\sum_{\a\in\Pi_\calS}a_{\calS,\a} \a)$ is a multiple of $\d$. Since 
$$
	[k_\calS (\sum_{\a\in\Pi_\calS }a_{\calS,\a} \a) : \a_p] = [k_\calS (\sum_{i=1}^ta_{\calS,\eta_i} \eta_i):\a_p] - k_\calS a_{-\a_p,\calS} = 4-2 = 2,
$$
it follows that $k_\calS (\sum_{\a\in\Pi_\calS }a_{\calS,\a} \a) = k\d$, hence
\begin{equation}\label{etasumalfa}
\sum_{i=1}^t k_{\calS}a_{\calS,\eta_i} \eta_i-\a_p=k\d+\a_p.
\end{equation}
It follows moreover that $k_{\calS} a_{\calS,\eta_i} (\eta_i,\eta_i) = 2(\a_p,\eta_i)$, namely
\begin{equation}\label{aetavee}
k_{\calS}a_{\calS,\eta_i} = \langle \a_p,\eta_i^\vee\rangle.
\end{equation}

The equalities (\ref{etasumalfa}) and (\ref{aetavee}) show that 
$$
\sum_{i=1}^t \langle \a_p,\eta_i^\vee \rangle \ov\eta_i = 2\ov\a_p,
$$
hence $\sum_{i=1}^t \langle \a_p,\eta_i^\vee \rangle \langle \ov\eta_i,\a_p^\vee \rangle =4$. Since $\gra_p$ is long, we have that $1=\langle \eta_i,\a_p^\vee \rangle = \langle \ov\eta_i,\a_p^\vee\rangle$, therefore considering the normal $\mathfrak{sl}(2)$-triple $\{x_\calS, h_\calS, y_\calS\}$ we get
$$-\ov\a_p(h_\calS) = \sum_{i=1}^t \langle \a_p,\ov\eta_i^\vee \rangle = \sum_{i=1}^t\langle \a_p,\eta_i^\vee \rangle = 4.
$$
Considering the grading $\bigoplus_{i \in \mathbb Z} \gog_1(i)$ induced by $h_\calS$, we see that $\gog_1^{-\ov \a_p} \subset \g_1(4)$, therefore $G_0x_\calS$ is not spherical by \cite[Theorem 5.6]{Pa2}.
%%%%%%%%%
% Set $e=\sum_{i=1}^tx_{-\ov\eta_i}$. Since both $\eta_i$ and $\a_p$ are in $N(w_p)$ and $w_p\in\Wab$, we see that $\eta_i+\a_p$ is not a root. It follows from  \eqref{aetavee} that $\a_p+z\eta_i$ is a root if and only if $-k_{\calS}a_{\eta_i}^{ns}\leq z\le 0$, thus we see that $ad(x_{-\ov\eta_i})^z(x_{\ov\a_p})\ne 0$ if and only if $0\le z\le k_{\calS}a_{\eta_i}^{ns}$. It follows that
%$$ad(e)^4(x_{\ov\a_p})=C\prod_{i=1}^tad(x_{-\ov\eta_i})^{k_{\calS}a^{ns}_{\ov\eta_i}}(x_{\ov\a_p})=C'x_{-\ov\a_p}\ne0,
%$$
%for some nonzero constants $C$, $C'$.
%Thus $\g_1(4)$ is nonzero, hence $G_0e$ is not spherical.
\end{proof}

Let $\grS \subset \Pi_0$ be a component, and let $e_\grS = - \langle \a_p, \a_\grS^\vee \rangle$ be the number of edges connecting $\a_\grS$ with $\a_p$. As a corollary of the previous proof we get the following.

\begin{corollary} 	\label{cor:somma-Y(eta)}
%\footnote{Ho tirato fuori questo corollario dalla dimostrazione del Lemma \ref{tubet}, visto che poi questi fatti venivano richiamati spesso.}
Let $\calS = \{\eta_1, \ldots, \eta_t\}$ be an orthogonal subset of maximal cardinality of $\calC^1_\s \setminus \{\gra_p\}$ and let $\grS \subset \Pi_0$ be a component.
\begin{itemize}
 \item[i)] If $\Upsilon(\eta_i) \in \D(\grS)$, then $k_{\calS}a_{\calS,\eta_i} = \langle \a_p, \eta_i^\vee \rangle = e_\grS$.
 \item[ii)] Let $(k\d-2\a_p)_\Sigma$ be the orthogonal projection of $k\d-2\a_p$ onto $\h_\Sigma = \Span(h_\a \st \a\in\Sigma)$ and set $I_\Sigma=\{i \st \Upsilon(\eta_i)\in\D(\Sigma)\}$.  Then
$$
\sum_{i\in I_\Sigma}\Upsilon(\eta_i)=\frac{1}{e_\Sigma}(k\d-2\a_p)_\Sigma.
$$
\end{itemize}
\end{corollary}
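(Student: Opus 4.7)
The plan is to deduce both claims from the identities established in the proof of Theorem~\ref{specialisnotspherical}, combined with the length information drawn from Lemma~\ref{N(w*p)} and Remark~\ref{oss:max-cardinality}.

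For part i), the first equality $k_\calS a_{\calS, \eta_i} = \langle \a_p, \eta_i^\vee\rangle$ is already equation (\ref{aetavee}) in that proof, so only the identification with $e_\grS$ remains. I would evaluate $\langle \a_p, \eta_i^\vee \rangle$ by passing to $\Upsilon(\eta_i) \in \Phi(\grS)^+_1$: since $\a_p$ is long and in $\Pia$ is connected to $\grS$ only through $\a_\grS$, one has $\langle \Upsilon(\eta_i), \a_p^\vee \rangle = \langle \a_\grS, \a_p^\vee \rangle = -1$, hence $2(\a_p, \Upsilon(\eta_i)) = -(\a_p, \a_p)$. Combining this with Lemma~\ref{N(w*p)}(v) gives $2(\a_p, \eta_i) = (\a_p, \a_p)$, and Lemma~\ref{N(w*p)}(iv) then yields $\langle \a_p, \eta_i^\vee \rangle = (\a_p, \a_p)/(\Upsilon(\eta_i), \Upsilon(\eta_i))$. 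To conclude I need $(\Upsilon(\eta_i), \Upsilon(\eta_i)) = (\a_\grS, \a_\grS)$, since then $\langle \a_p, \eta_i^\vee \rangle = (\a_p, \a_p)/(\a_\grS, \a_\grS) = e_\grS$, the last equality following from $e_\grS = -\langle \a_p, \a_\grS^\vee\rangle$ together with $\langle \a_\grS, \a_p^\vee \rangle = -1$. By Lemma~\ref{N(w*p)} and the maximality hypothesis on $\calS$, the set $\{\Upsilon(\eta_j) : j \in I_\grS\}$ is an orthogonal subset of maximal cardinality in $\Phi(\grS)^+_1$, so Remark~\ref{oss:max-cardinality} forces each $\Upsilon(\eta_j)$ to be a long root of $\D(\grS)$; inspecting the two non-simply-laced Hermitian pairs $(B_n, \a_1)$ and $(C_n, \a_n)$ in Table~\ref{table:hermitian-pairs} one checks that $\a_\grS$ is itself long in $\D(\grS)$, which gives the desired length identity.

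For part ii), I would start from equation (\ref{etasumalfa}), namely $\sum_i k_\calS a_{\calS, \eta_i}\eta_i = k\d + 2\a_p$, substitute $\eta_i = \Upsilon(\eta_i) + \a_p$, and use the relation $\sum_i k_\calS a_{\calS, \eta_i} = 4$ from Remark~\ref{oss:somma4}. This delivers the clean identity
\[
\sum_{i=1}^t k_\calS a_{\calS, \eta_i}\,\Upsilon(\eta_i) = k\d - 2\a_p.
\]
Part i) then allows me to replace $k_\calS a_{\calS, \eta_i}$ by $e_\grS$ for $i \in I_\grS$, so the left-hand side decomposes as $\sum_\grS e_\grS \sum_{i \in I_\grS}\Upsilon(\eta_i)$. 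Finally, since simple roots belonging to distinct components of $\Pi_0$ are orthogonal, the linear spans of the various $\grS$ in $\h_0^*$ are pairwise orthogonal and $\sum_{i \in I_\grS}\Upsilon(\eta_i)$ lies in the span of $\grS$; projecting both sides onto this subspace (which corresponds to $\h_\grS$ via $\nu$) isolates the $\grS$-summand on the left and produces $(k\d - 2\a_p)_\grS$ on the right, yielding the claim after dividing by $e_\grS$.

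The only non-routine input is the length identification $(\Upsilon(\eta_i), \Upsilon(\eta_i)) = (\a_\grS, \a_\grS)$ needed in part i); this is not automatic and requires the small case check for the two non-simply-laced Hermitian pairs, since a priori $\a_\grS$ could have been short inside $\D(\grS)$. Once it is in hand, both assertions follow by direct manipulation of the identities already available from the proof of Theorem~\ref{specialisnotspherical}.
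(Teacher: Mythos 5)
Your proof is correct and follows essentially the same route as the paper: part i) is formula (\ref{aetavee}) combined with the identity $\langle -\a_p,\eta_i^\vee\rangle=\langle\a_p,\Upsilon(\eta_i)^\vee\rangle=\langle\a_p,\a_\grS^\vee\rangle$ (the paper's (\ref{sameedges})), and part ii) is the same manipulation of (\ref{etasumalfa}) followed by orthogonal projection onto $\h_\grS$. The one thing you do beyond the paper is to rederive (\ref{sameedges}) in full generality and make explicit the length check $\Vert\Upsilon(\eta_i)\Vert=\Vert\a_\grS\Vert$ (via Remark \ref{oss:max-cardinality} and the fact that $\a_\grS$ is long in the pairs $(B_n,\a_1)$ and $(C_n,\a_n)$), which the paper leaves implicit since (\ref{sameedges}) is only stated inside a special case of the proof of Proposition \ref{63}; this is a correct and worthwhile clarification.
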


\begin{proof}
i) This follows immediately by formulas (\ref{sameedges}) and (\ref{aetavee}).

ii) Since $k_\calS(\sum_{i=1}^t a_{\calS,\eta_i}) = 4$, formula (\ref{etasumalfa}) implies that
$$
\sum_{i=1}^tk_{\calS}a_{\calS,\eta_i}\Upsilon(\eta_i)=k\d-2\a_p.
$$
On the other hand $(k\d-2\a_p)_\Sigma = \sum_{i\in I_\Sigma}k_{\calS}a_{\calS,\eta_i} \Upsilon(\eta_i)$, therefore the claim follows by i).
\end{proof}

We already noticed at the beginning of the section that $(\Sigma,\a_\Sigma)$ is a Hermitian pair. More precisely, we have the following.

\begin{proposition} \label{tubet}
Let $\grS \subset \Pi_0$ be a component, then $(\Sigma,\a_\Sigma)$ is a Hermitian pair of tube type.
\end{proposition}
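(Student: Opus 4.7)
The plan is to verify the tube-type characterization (\ref{sumtube}) by computing $\sum_{i \in I_\Sigma} \Upsilon(\eta_i)$ explicitly via Corollary \ref{cor:somma-Y(eta)} ii) and showing that it equals $2\omega_{\alpha_\Sigma}$, twice the fundamental weight of $(\Sigma, \alpha_\Sigma)$. First I would fix an orthogonal subset $\calS = \{\eta_1, \ldots, \eta_t\}$ of $\calC^1_\s \setminus \{\alpha_p\}$ of maximal cardinality. By Lemma \ref{N(w*p)} iv) together with Remark \ref{oss:max-cardinality}, the set $\{\Upsilon(\eta_i) : i \in I_\Sigma\}$ is a maximum-cardinality orthogonal subset of $\Phi(\Sigma)^+_1$ consisting of long roots, and after a suitable choice of Borel in $\g_\Sigma$ it plays the role of a Harish-Chandra strongly orthogonal system for $(\Sigma, \alpha_\Sigma)$. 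Corollary \ref{cor:somma-Y(eta)} ii) then gives $\sum_{i \in I_\Sigma} \Upsilon(\eta_i) = \frac{1}{e_\Sigma}(k\delta - 2\alpha_p)_\Sigma$.

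Next I would evaluate the orthogonal projection onto $\h_\Sigma$. Since $(\delta, \h_0^*) = 0$, the term $(k\delta)_\Sigma$ vanishes. For $(\alpha_p)_\Sigma$ I would use that $\alpha_\Sigma$ is the unique simple root of $\Sigma$ adjacent to $\alpha_p$ in $\widehat{\Pi}$: hence $(\alpha_p, \beta) = 0$ for every $\beta \in \Sigma \setminus \{\alpha_\Sigma\}$, while $(\alpha_p, \alpha_\Sigma) = \frac{1}{2}\langle \alpha_p, \alpha_\Sigma^\vee \rangle (\alpha_\Sigma, \alpha_\Sigma) = -\frac{e_\Sigma}{2}(\alpha_\Sigma, \alpha_\Sigma)$. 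Since the fundamental weight $\omega_{\alpha_\Sigma}$ of $(\Sigma, \alpha_\Sigma)$ satisfies the same orthogonality pattern with $(\omega_{\alpha_\Sigma}, \alpha_\Sigma) = \frac{1}{2}(\alpha_\Sigma, \alpha_\Sigma)$, the equality $(\alpha_p)_\Sigma = -e_\Sigma \omega_{\alpha_\Sigma}$ follows, yielding $\sum_{i \in I_\Sigma} \Upsilon(\eta_i) = 2\omega_{\alpha_\Sigma}$.

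To conclude, I would verify (\ref{sumtube}) directly: for any positive root $\alpha \in \D(\Sigma)_j$ with $j \in \{0, 1\}$, one computes $(2\omega_{\alpha_\Sigma}, \alpha) = 2[\alpha : \alpha_\Sigma](\omega_{\alpha_\Sigma}, \alpha_\Sigma) = j(\alpha_\Sigma, \alpha_\Sigma)$, which is exactly the tube-type condition. Hence $(\Sigma, \alpha_\Sigma)$ is a Hermitian pair of tube type. The only non-routine step is the projection identity $(\alpha_p)_\Sigma = -e_\Sigma \omega_{\alpha_\Sigma}$; once this is in hand, tube type follows from (\ref{sumtube}) uniformly, without case analysis over the affine types appearing in Table \ref{diagramsns}.
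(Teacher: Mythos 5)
Your proposal is correct and follows essentially the same route as the paper: both rest on Corollary \ref{cor:somma-Y(eta)} ii), the fact (via Lemma \ref{N(w*p)} and Remark \ref{oss:max-cardinality}) that $\{\Upsilon(\eta_i)\st i\in I_\Sigma\}$ is a maximal orthogonal set of long roots that can be taken as the Harish-Chandra system for a suitable positive system, and the criterion (\ref{sumtube}). The only difference is cosmetic: you package the computation as the projection identity $(\a_p)_\Sigma=-e_\Sigma\omega_{\a_\Sigma}$, i.e.\ $\sum_{i\in I_\Sigma}\Upsilon(\eta_i)=2\omega_{\a_\Sigma}$, whereas the paper checks the pairings $(\a,\sum_{i\in I_\Sigma}\Upsilon(\eta_i))=i(\a_\Sigma,\a_\Sigma)$ directly, which amounts to the same verification.
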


\begin{proof}
For all $\a\in\Sigma\setminus \{\a_\Sigma\}$, it clearly holds $(\a,k\d-2\a_p)=(\a,(k\d-2\a_p)_\Sigma)=0$. By Corollary \ref{cor:somma-Y(eta)} we get then the equality
$$
(\a,\sum_{i\in I_\Sigma}\Upsilon(\eta_i))=i(\a_\S,\a_\S)  \qquad \mbox{  for all } \a\in \Phi(\Sigma)^+_i,\quad i=0,1.
$$
By Lemma \ref{N(w*p)}, $\{\Upsilon(\eta_i) \st i\in I_\Sigma\}$ is an orthogonal set of maximal cardinality in $\Phi(\Sigma)^+_1$. By Remark \ref{oss:max-cardinality} we can then choose a set of positive roots for $\Phi(\Sigma)_0$ in such a way that $\{\Upsilon(\eta_i) \st i\in I_\Sigma\}$ is the corresponding set of strongly orthogonal roots in $\Phi(\Sigma)^+_1$. Therefore the claim follows immediately from (\ref{sumtube}). 
\end{proof}

In Theorem \ref{specialisnotspherical} we constructed a non-spherical orbit $G_0x_\calS \subset G_0 \aa_p$ starting from an orthogonal subset $\calS \subset \mathcal C^1_\s$ of maximal cardinality. We now show that this construction extends to any maximal orthogonal subset $\calS \subset \mathcal C^1_\s$, and it always gives
rise to the same $G_0$-orbit.

\begin{theorem} \label{minimalnonspherical}
Let $\calS \subset \mathcal C^1_\s \setminus \{\a_p\}$ be a maximal orthogonal subset.
\begin{itemize}
	\item[i)] For all $\a \in \Phi$ it holds $\a(h_{\calS}) = -2 \langle \a,\a_p^\vee \rangle$.
	\item[ii)] The orbit $G_0 x_{\calS} \subset \gog_1$ is not spherical, and $G_0x_\calS=G_0x_{\mathcal T}$ for all maximal orthogonal subset $\mathcal T \subset \mathcal C^1_\s \setminus \{\a_p\}$.
\end{itemize}
\end{theorem}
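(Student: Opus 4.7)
The approach is to show that the element $h_\calS \in \h_0$ does not actually depend on the choice of the maximal orthogonal subset $\calS$, and then to identify it with the value already computed in the maximal-cardinality case of Theorem~\ref{specialisnotspherical}. By Lemma~\ref{N(w*p)} we can decompose $\calS = \bigsqcup_\Sigma \calS_\Sigma$ indexed by the connected components $\Sigma$ of $\Pi_0$, so that $\calB_\Sigma := \Upsilon(\calS_\Sigma)$ is a maximal orthogonal subset of $\Phi(\Sigma)^+_1$. Using $\ov\eta = \ov\a_p + \Upsilon(\eta)$ together with $(\ov\eta,\ov\eta)=(\Upsilon(\eta),\Upsilon(\eta))$, a direct computation rewrites
\[
-h_\calS \;=\; \sum_{\eta \in \calS} \ov\eta^\vee \;=\; 2\,\nu^{-1}\!\Bigl(\,\ov\a_p \sum_\Sigma \sum_{\gamma \in \calB_\Sigma} \frac{1}{(\gamma,\gamma)} \;+\; \sum_\Sigma \sum_{\gamma \in \calB_\Sigma} \frac{\gamma}{(\gamma,\gamma)}\,\Bigr),
\]
so the task reduces to showing that, for each Hermitian pair $(\Sigma,\a_\Sigma)$, which is of tube type by Proposition~\ref{tubet}, both quantities $\sum_{\gamma \in \calB_\Sigma} 1/(\gamma,\gamma)$ and $\sum_{\gamma \in \calB_\Sigma} \gamma/(\gamma,\gamma)$ are independent of the maximal orthogonal subset $\calB_\Sigma \subset \Phi(\Sigma)^+_1$.

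When $\Phi(\Sigma)$ is simply laced, Remark~\ref{oss:max-cardinality} ensures that every maximal orthogonal subset of $\Phi(\Sigma)^+_1$ has maximal cardinality and consists of long roots of common length; the transitive $W(\Sigma)$-action together with identity~(\ref{sumtube})---which pins down $\sum\gamma$ by its pairings with every root of $\Phi(\Sigma)$---then gives the independence of both sums. For the two non-simply-laced tube-type cases $(B_n,\a_1)$ and $(C_n,\a_n)$, I will use the explicit description of $\Ort_{\max}(\Phi_1^+)$ from Propositions~\ref{prop:anticatene-tipoB} and~\ref{prop:anticatene-tipoC}: by Lemma~\ref{short:tube} a short root in such a maximal subset equals $\half(\grg+\grg')$ for two elements $\grg,\grg'$ of a Harish-Chandra set, and since short and long roots have squared norms in ratio $1:2$, exchanging one long root for two short ones (the operation that connects maximal subsets of different types with $2h+k$ fixed) preserves both weighted sums. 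Combining this independence with the identity $\sum_i \langle \a_p,\eta_i^\vee\rangle\ov\eta_i = 2\ov\a_p$ already extracted from~(\ref{etasumalfa}) and~(\ref{aetavee}) in the proof of Theorem~\ref{specialisnotspherical} for $\calS$ of maximal cardinality, one obtains $h_\calS = -2\a_p^\vee$, and pairing with any $\a \in \Phi$ yields~(i).

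Part~(ii) then follows in two steps. Taking $\a = -\ov\a_p$ in~(i) and using that $\a_p$ is long gives $-\ov\a_p(h_\calS) = 4$, whence $\gog_1^{-\ov\a_p} \subset \gog_1(4)$ for the grading defined by $h_\calS$; thus $\height_1(x_\calS)\geq 4$ and $G_0 x_\calS$ is not spherical by \cite[Theorem~5.6]{Pa2}, exactly as in Theorem~\ref{specialisnotspherical}. For the orbit equality, I will invoke the Vinberg analogue of Kostant's theorem on $\mathfrak{sl}(2)$-triples (\cite{Vi}): part~(i) shows that the neutral element $h_\calS$ of the normal triple $(x_\calS,h_\calS,y_\calS)$ coincides with $h_\calT$ for every other maximal orthogonal $\calT$, and any two normal $\mathfrak{sl}(2)$-triples in $\gog_0\oplus\gog_1$ sharing the same neutral element are conjugate under the centralizer of that element in $G_0$, so $x_\calS$ and $x_\calT$ lie in the same $G_0$-orbit. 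The main obstacle I expect is the independence claim in the two non-simply-laced cases: it cannot be deduced uniformly from the tube-type identity~(\ref{sumtube}) and must be verified by hand using the fine structure of short roots in $\Phi(\Sigma)^+_1$ supplied by Lemma~\ref{short:tube}.
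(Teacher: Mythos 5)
Your proposal is correct and follows essentially the same route as the paper: both arguments reduce an arbitrary maximal orthogonal subset to the maximal-cardinality case of Theorem \ref{specialisnotspherical} through the tube-type structure of each component (Proposition \ref{tubet}, Lemma \ref{short:tube}, together with the explicit $(B_n,\gra_1)$ and $(C_n,\gra_n)$ descriptions), and both conclude part ii) from the fact that the characteristic of a normal triple determines the nilpotent $G_0$-orbit. Your per-component invariance of $\sum_{\gamma\in\calB_\Sigma}\gamma/(\gamma,\gamma)$ and $\sum_{\gamma\in\calB_\Sigma}1/(\gamma,\gamma)$ is just a repackaging of the paper's identity (\ref{sumeta}), namely $\sum_{\eta\in\calS}\langle \a_p,\eta^\vee\rangle\eta=k\d+2\a_p$ (obtained there via Corollary \ref{cor:somma-Y(eta)}), and your appeal to conjugacy of normal triples sharing the same neutral element is the same classification fact the paper invokes as the weighted Dynkin diagram criterion of \cite[Section 9.5]{Col}; the only slip is cosmetic, since the move preserving $2h+k$ exchanges \emph{two} long roots for \emph{one} short root, not the other way around.
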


\begin{proof}
If $\Sigma \subset \Pi_0$ is a component, set $\calS_\Sigma = \Upsilon(\calS)\cap\Phi(\Sigma)^+_1$, a maximal orthogonal subset in $\Phi(\Sigma)^+_1$. Write explicitly $\calS_\Sigma = \{\gamma_1,\ldots,\gamma_s,\be_1,\ldots,\be_t\}$ with $\gamma_1, \ldots, \grg_s$ long roots and $\be_1, \ldots, \be_t$ short roots. Let $r_\grS$ be the rank of the Hermitian symmetric space corresponding to the pair $(\grS,\a_\Sigma)$ and let $\calS'_\Sigma = \{\gamma_1,\ldots,\gamma_{r_\grS}\}$ be an orthogonal subset of maximal cardinality of $\Phi(\grS)^+_1$ containing $\{\gamma_1,\ldots,\gamma_s\}$. Then $\calS' = \bigcup_\Sigma \Upsilon^{-1}(\calS'_\Sigma)$ is an orthogonal subset of $\mathcal C^1_\s \setminus \{\a_p\}$ of maximal cardinality.

%Let $\grS \subset \Pi_0$ be a component. 
By Proposition \ref{tubet}, the Hermitian pair $(\grS,\a_\Sigma)$ is of tube type, therefore by Lemma \ref{short:tube}, upon relabelling $\grg_{s+1}, \ldots, \grg_{r_\grS}$, we may assume that $\be_i=\half(\gamma_{s+2i-1}+\gamma_{s+2i})$ for all $i=1, \ldots, t$. By Corollary \ref{cor:somma-Y(eta)} it follows that
$$
\sum_{i=1 }^s\gamma_i+\sum_{j=1}^t 2\be_j=\sum_{i=1}^{r_\grS} \gamma_i = \frac{1}{e_\Sigma}(k\d-2\a_p)_\Sigma.
$$
Thus
$$
e_\Sigma(\sum_{i=1 }^s\Upsilon^{-1}(\gamma_i)+\sum_{j=1}^t 2\Upsilon^{-1}(\be_j)) = (k\d-2\a_p)_\Sigma+e_\Sigma r_\grS \a_p.
$$
Summing over all components $\grS \subset \Pi_0$, we get 
$$
\sum_{\eta\in\calS}e_\eta \eta = k\d-2\a_p+\sum_\Sigma e_\Sigma r_\Sigma \a_p,
$$
where for $\eta \in \calC^1_\s \setminus \{\a_p\}$ we define
$$
e_\eta = -\langle \a_p,\Upsilon(\eta)^\vee \rangle = \left\{
\begin{array}{cl}
e_\Sigma & \mbox{ if } \Upsilon(\eta) \in \Phi(\Sigma)^+_1 \mbox{ is a long root,} \\
2 e_\Sigma & \mbox{ if } \Upsilon(\eta) \in \Phi(\Sigma)^+_1 \mbox{ is a short root.}
\end{array}\right.$$

Notice that by Lemma \ref{N(w*p)} and Corollary \ref{cor:somma-Y(eta)} i) we have $e_\eta = \langle \a_p,\eta^\vee \rangle = k_\calS a_{\eta,\calS}$ for all $\eta \in \calS'$. Therefore by Corollary \ref{cor:somma-Y(eta)} ii) and Remark \ref{oss:somma4} we obtain
$$
\sum_\Sigma e_\Sigma r_\Sigma = \sum_{\eta\in\calS'}e_\eta = \sum_{\eta\in\calS'} k_{\calS'} a_{\eta, \calS'} = 4.
$$

On the other hand, by Lemma \ref{N(w*p)}, we have $e_\eta = \langle \a_p,\eta^\vee \rangle$  for all $\eta \in \calS$, therefore we get
\begin{equation}\label{sumeta}
 \sum_{\eta\in\calS} \langle \a_p,\eta^\vee\rangle \eta =  \sum_{\eta\in\calS} e_\eta \eta = k\d+2\a_p.
\end{equation}

We now conclude the proof by showing that the orbit $G_0 x_\calS$ does not depend on the maximal orthogonal subset $\calS \subset \calC^1_\s$, hence it is non-spherical by Theorem \ref{specialisnotspherical}. We identify the orbit $G_0 x_{\calS} \subset \gog_1$ by computing its weighted Dynkin diagram (see \cite[Section 9.5]{Col}).
%\footnote{Aggiunto referenza per i diagrammi di Dynkin pesati.}

By Lemma \ref{N(w*p)} we have $\langle \eta,\a_p^\vee \rangle =1$ for all $\eta\in\mathcal C^1_\sigma \setminus \{\a_p\}$. If $\a\in\Phi$, considering the normal $\mathfrak{sl}(2)$-triple $\{x_\calS, h_\calS, y_\calS\}$, it follows that
$$
 \a(h_{\calS})=-\sum_{\eta\in\calS} \langle \a,\eta^\vee \rangle \langle \eta,\a_p^\vee \rangle = -\frac{2}{||\a_p||^2}(\a,\sum_{\eta\in\calS} \langle \a_p,\eta^\vee \rangle \eta). $$
By (\ref{sumeta}) we get then
$$
  \a(h_{\calS})=-\frac{2}{||\a_p||^2}(\a,k\d+2\a_p)=-2 \langle \a,\a_p^\vee \rangle .
$$
Since the right hand side of previous equality does not depend on $\calS$, it follows that $G_0 x_{\calS}$ does not depend on $\calS$ either.
\end{proof}

%\section{Some results on gradings}\label{6}

%Let $\aa \subset \gog_1$ be a $B_0$-stable abelian subalgebra of $\gog$ and let $x_0 \in \aa$ be such that $\aa = \ol{B_0 x_0}$. By the theorem of previous section $\aa$ is not spherical if and only $\height(x) = 4$, in which case $\gog_0(4) = 0$.

We conclude this Section by showing that $G_0 \aa_p$ has complexity one. Recall that the complexity of an irreducible $G_0$-variety is defined as
$$
c_{G_0}(X) = \min_{x\in X} \mbox{ codim} B_0 x.
$$
In particular, the spherical $G_0$-varieties coincide with the $G_0$-varieties of complexity zero, and the complexity of a $G_0$-variety can be regarded as a measure of its non-sphericity.
%\footnote{Ho spostato qui la definizione di complessit\`a, forse non ne parlerei di queste cose nell'introduzione.}
By Theorem \ref{teo:B0-orbite} together with Theorem \ref{minimalnonspherical}, if $\calS \subset \mathcal C^1_\s$ is a maximal orthogonal subset, then $x_\calS$ is in the open $G_0$-orbit of $G_0\aa_p$. Since the complexity of a $G_0$-variety coincides with that of its $G_0$-stable open subsets, it follows in particular that  $c_{G_0}(G_0 x_\calS) = 1$ for all maximal orthogonal subset $\calS \subset \mathcal C^1_\s \setminus \{\a_p\}$.

We start by recalling and proving some general results about the gradings associated to nilpotent elements of small height in $\mZ_2$-graded Lie algebras.
%\footnote{mancano referenze}

\subsection{Remarks on the gradings associated to nilpotent elements of small height in $\g_1$.}\label{51} Let $x_1 \in \gog_1$ be a nilpotent element and let $\{x_1, h_0, y_1\}$ be a normal $\mathfrak{sl}(2)$-triple containing $x_1$. Denote $\gop_0 = \gog_0(\geq\!\!0)$, $\gou_0 = \gog_0(\geq\!\!1)$ and $\gol_0 = \gog_0(0)$, then $\gop_0$ is a parabolic subalgebra of $\gog_0$ with Levi factor $\gol_0$ and with nilradical $\gou_0$. Fix a Borel subalgebra $\b_0 \subset \g_0$ contained in $\gop_0$ and set $\gob_{00} = \gob_0 \cap \gog_0(0)$, a Borel subalgebra of $\gol_0$, so that $\gob_0 = \gob_{00} \oplus \gou_0$. Notice that $\gog_1(i)$ is $\gol_0$-stable for all $i \in \mZ$. 

Let $P_0$ be the parabolic subgroup of $G_0$ corresponding to $\gop_0$, $L_0 \subset P_0$ the Levi factor corresponding to $\gol_0$ and $U_0$ the unipotent radical of $P_0$, $B_0 \subset G_0$ the Borel subgroup corresponding to $\b_0$, and $B_{00} \subset L_0$ be the Borel subgroup of $L_0$ corresponding to $\gob_{00}$.
Recall that in this section the fixed points set $\g_0$ of the involution is assumed to be semisimple and we denote by $\a_p$ the simple root in $\Pia$ corresponding to $\sigma$.
\begin{proposition}\label{apweight}
Suppose that $\gog_0$ is semisimple with corresponding simple root $\gra_p \in \widehat\Pi$, and let $x_1 \in \gog_1$ be a nilpotent element with $\height(x_1) = m$. Then $\gog_1(m)$ is an irreducible $L_0$-module. If moreover $\gra(h_0) \geq 0$ for all $\gra \in \Phi_0^+$, the highest weight vector of $\g_1(m)$ is $x_1^{-\ov\gra_p}$. %\footnote{Supponiamo $m=4$ e $\gog_0(4) = 0$, allora ha sempre dimensione 1? Equivalentemente, ogni radice di $\gol_0$ deve essere ortogonale a $\gra_p$.}
\end{proposition}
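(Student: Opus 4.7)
The plan is to identify $\gog_1(m)$ with the space $\gog_1^{\gou_0}$ of $\gou_0$-invariants in the $\gog_0$-module $\gog_1$, and then appeal to the standard fact that $V^{\mathfrak u}$ is an irreducible $\gol$-module whenever $V$ is an irreducible finite-dimensional module for a reductive Lie algebra $\gog_0$ and $\mathfrak p = \gol \oplus \mathfrak u$ is a parabolic subalgebra.

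First, since $\height(x_1)=m$ forces $\gog(k)=0$ for all $k>m$, we have $[\gou_0,\gog_1(m)]\subseteq \gog(\ge m+1)=0$, so $\gou_0$ annihilates $\gog_1(m)$. Because $L_0$ preserves the grading by $h_0$, this yields an inclusion of $L_0$-modules $\gog_1(m)\subseteq \gog_1^{\gou_0}$.

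Next I would establish that $\gog_1$ is irreducible as a $\gog_0$-module, with lowest weight $\ov\gra_p$ and highest weight $-\ov\gra_p$. Viewing $\gog_1$ as the first homogeneous component $t\otimes \gog_1$ of $\widehat L(\gog,\sigma)$, the Chevalley generator $e_{\gra_p}$ spans the root space $\widehat L(\gog,\sigma)_{\gra_p}=t\otimes \gog_1^{\ov\gra_p}$ (one-dimensional by Proposition~\ref{prop:kac}) and is annihilated by $f_{\gra_i}$ for every $\gra_i\in\Pi_0$ by the Serre relations; hence $e_{\gra_p}$ is a $\gog_0$-lowest weight vector of weight $\ov\gra_p$. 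The indecomposability of $\sigma$ ensures that the $\gog_0$-submodule it generates is all of $t\otimes \gog_1$, so $\gog_1$ is irreducible with lowest weight $\ov\gra_p$. The restriction of the Killing form gives a nondegenerate $\gog_0$-invariant pairing on $\gog_1$, so $\gog_1$ is self-dual and its highest weight equals $-\ov\gra_p$; in particular $x_1^{-\ov\gra_p}$ is a $\gob_0$-highest weight vector of $\gog_1$.

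Applying the standard fact to $V=\gog_1$ and $\mathfrak p=\gop_0$, we obtain that $\gog_1^{\gou_0}$ is an irreducible $L_0$-module. Since $h_0$ lies in the centre of $\gol_0$, Schur's lemma forces $h_0$ to act on $\gog_1^{\gou_0}$ by a single scalar $d$, i.e.\ $\gog_1^{\gou_0}\subseteq \gog_1(d)$. Combined with the inclusion from the first step and the non-vanishing of $\gog_1(m)$, this forces $d=m$ and $\gog_1(m)=\gog_1^{\gou_0}$, proving the irreducibility assertion. Under the dominance hypothesis on $h_0$, the Borel $\gob_0$ contains $\gou_0$, so the $\gob_0$-highest weight vector $x_1^{-\ov\gra_p}$ of $\gog_1$ already lies in $\gog_1^{\gou_0}=\gog_1(m)$ and is evidently also the $B_{00}$-highest weight vector there; in particular one reads off $-\ov\gra_p(h_0)=m$.

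The main obstacle in the plan is the irreducibility of $\gog_1$ as a $\gog_0$-module and the identification of its extreme weights as $\pm\ov\gra_p$; this structural input is exactly what allows the purely degree-theoretic inclusion $\gog_1(m)\subseteq \gog_1^{\gou_0}$ to be promoted to an equality via the general irreducibility result for $V^{\mathfrak u}$.
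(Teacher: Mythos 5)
Your argument is correct and rests on the same two pillars as the paper's one-paragraph proof: $\gou_0$ annihilates $\gog_1(m)$ because the grading stops in degree $m$, and $\gog_1$ is an irreducible $\gog_0$-module with highest weight vector $x_1^{-\ov\gra_p}$ (a fact the paper simply asserts from the semisimplicity of $\gog_0$, and which you justify via the affine presentation and self-duality under the Killing form). Your packaging through $\gog_1^{\gou_0}$, the general irreducibility of $V^{\mathfrak u}$, and Schur's lemma for the central element $h_0$ is a mild elaboration of the paper's direct observation that any $\gob_{00}$-highest weight vector of $\gog_1(m)$ is a $\gob_0$-highest weight vector of $\gog_1$, hence proportional to $x_1^{-\ov\gra_p}$.
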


\begin{proof}
Up to conjugating $x_1$ we may assume that $\gra(h_0) \geq 0$ for all $\gra \in \Phi_0^+$. Notice that $\gou_0$ acts trivially on $\gog_1(m)$, therefore every highest weight of $\gog_1(m)$ as a $\gol_0$-module is actually a highest weight for $\gog_1$ as a $\gog_0$-module. On the other hand, since $\gog_0$ is semisimple, $\gog_1$ is an irreducible $\gog_0$-module with highest weight vector $x_1^{-\ov\gra_p}$, therefore $\gog_1(m)$ is an irreducible $\gol_0$-module as well, with highest weight vector $x_1^{-\ov\gra_p}$.
\end{proof}

Assume furthermore that $\height(x_1) \leq 4$ and $\gog_0(4) = 0$. %\footnote{Mi sembra che $\gog_0(4) = 0$ lo dimostriamo solo nella sezione successiva, o mi sbaglio? Si pu\'o vedere facilmente nel caso di $\aa_p$? Se no si pu\`o spostare tutto questo come ultima sezione, stando attenti alla Prop. \ref{apweight}, che in ogni caso adesso \`e indipendente da tutto il resto.} 
For $i \geq 2$, set $\aa_i = \gog_1(\geq\!\!i)$. Notice that $\aa_i$ is a $B_0$-stable abelian subalgebra of $\gog$: indeed $\aa_i$ is $\gop_0$-stable, hence $P_0$-stable, and being $\height(x_1) \leq 4$ and $\gog_0(4) = 0$, it follows that $[\aa_i, \aa_i] \subset \gog_0(4) = 0$. It follows then by Theorem~\ref{teo:B0-orbite} that $\aa_i$ possesses finitely many $B_0$-orbits, parametrized by the orthogonal subsets of $\Psi(\aa_i)$. 

\begin{proposition}	\label{prop:spherical-modules}
Suppose that $\height(x_1) \leq 4$ and that $\gog_0(4) = 0$. Then $\gog_1(i)$ is a spherical $L_0$-module for all $i\geq 2$.
\end{proposition}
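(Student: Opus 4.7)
The plan is to deduce the sphericity of each $\g_1(i)$ directly from Theorem~\ref{teo:B0-orbite} applied to the chain of abelian subalgebras $\aa_i=\g_1({\geq} i)$, by exhibiting $\g_1(i)$ as a $P_0$-equivariant quotient $\aa_i/\aa_{i+1}$ on which $U_0$ acts trivially.

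First I would reduce to $i\leq 4$: since $\height(x_1)\leq 4$ we have $\g_1(i)=0$ for $i\geq 5$, which is trivially spherical. For $2\leq i\leq 4$, I would record three structural facts about $\aa_i=\g_1({\geq} i)$ that are already implicit in the paragraph preceding the statement. Namely, $\aa_i\subset\g_1$ is $B_0$-stable because $\aa_i$ is visibly stable under $\gop_0=\g_0({\geq} 0)$, and it is abelian because $[\aa_i,\aa_i]\subset \g_0({\geq} 2i)\subset \g_0({\geq} 4)$, which vanishes thanks to the hypothesis $\g_0(4)=0$ combined with $\g(j)=0$ for $j>4$. Hence Theorem~\ref{teo:B0-orbite} applies and $\aa_i$ contains only finitely many $B_0$-orbits, so it must possess an open $B_0$-orbit $B_0 v$.

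Next I would analyse the natural projection $\pi_i:\aa_i\to\aa_i/\aa_{i+1}$. Since the decomposition $\g=\bigoplus_j \g(j)$ is the eigenspace decomposition of $\mathrm{ad}(h_0)$ and $h_0\in\gol_0$, the splitting $\aa_i=\g_1(i)\oplus\aa_{i+1}$ is $\gol_0$-equivariant and induces an isomorphism $\aa_i/\aa_{i+1}\cong\g_1(i)$ of $L_0$-modules. Moreover $\pi_i$ is $P_0$-equivariant, and on the quotient $U_0$ acts trivially: indeed for $u\in U_0$ and $w\in\aa_i$ one has $u\cdot w - w\in [\gou_0,\aa_i]\subset \aa_{i+1}$. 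Therefore the $P_0$-action on $\aa_i/\aa_{i+1}$ factors through $P_0/U_0\cong L_0$, and via the decomposition $B_0=B_{00}\cdot U_0$ the $B_0$-action on $\g_1(i)$ coincides with the $B_{00}$-action.

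Finally, the image $\pi_i(B_0 v)=B_0\pi_i(v)=B_{00}\pi_i(v)$ is a constructible subset of $\g_1(i)$ of the same dimension as the open subset $\pi_i^{-1}(\pi_i(B_0 v))\cap B_0 v$, hence it contains an open subset; by homogeneity the $B_{00}$-orbit $B_{00}\pi_i(v)$ is itself open in $\g_1(i)$. Thus $\g_1(i)$ is a spherical $L_0$-module, as desired. I do not foresee a real obstacle beyond cleanly tracking the three group actions ($B_0$ on $\aa_i$, the quotient action, and the $B_{00}$-action on $\g_1(i)$) and invoking the hypothesis $\g_0(4)=0$ at the exact place where it is needed, namely to ensure that each $\aa_i$ is abelian and hence that Theorem~\ref{teo:B0-orbite} is applicable.
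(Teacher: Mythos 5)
Your proof is correct and takes essentially the same route as the paper: both rely on the chain $\aa_i=\g_1(\geq\! i)$, whose $B_0$-stability and abelianity come from $\height(x_1)\leq 4$ together with $\g_0(4)=0$, and both extract from Theorem~\ref{teo:B0-orbite} an element $v$ of the open (dense) $B_0$-orbit in $\aa_i$. The only difference is cosmetic: the paper argues infinitesimally, writing $\aa_i=[\b_0,v]$ and splitting off $[\gob_{00},u]=\g_1(i)$ for the component $u$ of $v$ in $\g_1(i)$, whereas you push the open orbit through the $P_0$-equivariant projection $\aa_i\to\aa_i/\aa_{i+1}\cong\g_1(i)$, on which $U_0$ acts trivially; either device produces the dense $B_{00}$-orbit in $\g_1(i)$.
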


\begin{proof}
Let $i \geq 2$ and consider the $B_0$-stable subalgebra $\aa_i$. By Theorem~\ref{teo:B0-orbite}, there is $v_i \in \aa_i$ such that $\ol{B_0 v_i} = \aa_i$. Since $\aa_i = \gog_1(i) \oplus \aa_{i+1}$ and since $\aa_{i+1}$ is also $B_0$-stable, we may write $v_i = u_i + u_i'$, for some $u_i \in \gog_1(2)$ and $u_i' \in \aa_{i+1}$ with $u_i \neq 0$. Therefore
$$
	\gog_1(i) \oplus \aa_{i+1} = \aa_i = [\gob_0,v_i] \subset [\gob_{00},u_i] \oplus ([\gou_0,u_i] + [\gob_0,u_i']).
$$
Since $[\gob_{00},u_i] \subset \gog_1(i)$ and $[\gou_0,u_i] + [\gob_0,u_i'] \subset \aa_{i+1}$,  the equality $[\gob_{00},u_i] = \gog_1(2)$ follows. Therefore $\gog_1(2) = \ol{B_{00}u_i}$ is a spherical $L_0$-module.
\end{proof}

\subsection{The complexity of $G_0 \aa_p$.}
We now apply the results of previous subsection to compute the complexity of $G_0 \aa_p$. As we already noticed, it is enough to show that $c_{G_0}(G_0 x_\calS) = 1$ when $\calS \subset \calC^1_\s \setminus \{\a_p\}$ is an orthogonal subset of maximal cardinality. In particular, under the bijection $\Upsilon : \calC^1_\s \setminus \{\a_p\} \rightarrow \bigcup_\grS \Phi(\Sigma)^+_1$ of Lemma \ref{N(w*p)}, we may assume that $\a_\grS \in \Upsilon(\calS)$ for all components $\Sigma \subset \Pi_0$.
\par\noindent
Let $\calS \subset \calC^1_\s \setminus \{\a_p\}$ be an orthogonal subset of maximal cardinality, let $\{x_\calS, h_\calS, y_\calS\}$ be the corresponding normal $\mathfrak{sl}(2)$-triple, let $\gog = \bigoplus \g(i)$, and for $j=0,1$ set $\gog_j(i) = \gog_j \cap \g(i)$. We keep the notation of previous subsection.

Let $K_0 \subset L_0$ be the identity component of the stabilizer of $x_\calS \in \gog_1(2)$. Then $K_0$ is reductive and $\gog_1(2)$ is a $K_0$-orthogonal module, therefore by a theorem of Luna there exists a reductive subgroup $M \subset K_0$ and a $K_0$-stable open subset $Z \subset \gog_1(2)$ such that every $K_0$-orbit in $Z$ is isomorphic to $K_0/M$ (see \cite[Section 5]{Pa2}). Then by \cite[Theorem 5.4]{Pa2} the following formula holds:
\begin{equation} \label{eq:complexity-panyushev}
	c_{G_0}(G_0 x_\calS) = c_{L_0}(\g_1(2)) + c_M(\g_1(\geq \!\!3)).
\end{equation}

\begin{proposition}	\label{prop:g1(3)andg1(4)}
%In the previous notation, 
We have $\g_1(3) =\g_0(4)=\{ 0\}$ and $\g_1(4) = \gog_1^{-\ov \a_p}$ is the trivial one-dimensional representation of $(L_0,L_0)$.
\end{proposition}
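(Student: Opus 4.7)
My approach rests on the identity $\a(h_\calS) = -2\langle \a, \a_p^\vee\rangle$ for all $\a \in \Phi$, established in Theorem~\ref{minimalnonspherical}(i). The right-hand side is always an even integer, so every non-zero $\h_0$-weight space of $\g$ lies in some $\g(n)$ with $n$ even; in particular $\g_1(3)=0$.

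To analyse $\g(4)$ I classify the weights $\a \in \Phi$ with $\langle \a, \a_p^\vee\rangle = -2$. Given such an $\a$, I lift it to a real root $\be \in \widehat\Phi_{re}$ with $\ov\be = \a$, and set $\xi = \be + \a_p$. Then $(\xi,\a_p) = 0$ and $(\xi,\xi) = (\be,\be) - (\a_p,\a_p)$. Since $\a_p$ is long, no real root of $\widehat\Phi$ is longer, so $(\xi,\xi) \le 0$. On the other hand $\xi$ lies in the root lattice, on whose real span the invariant form is positive semi-definite with radical $\R\d$, so $(\xi,\xi) \ge 0$. Thus $(\xi,\xi) = 0$ and $\xi \in \R\d$; since $\xi$ is an integer combination of simple roots, this forces $\xi = k\d$ for some $k \in \mathbb Z$. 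Consequently $\ov\be = -\ov\a_p$. Because $\a_p$ is non-complex we have $-\ov\a_p \in \Phi_1 \setminus \Phi_0$, whence $\g_0(4)=0$ and $\g_1(4) \subseteq \g_1^{-\ov\a_p}$. The reverse inclusion follows from $(-\ov\a_p)(h_\calS) = -2\langle -\ov\a_p, \a_p^\vee\rangle = 4$, and $\dim \g_1^{-\ov\a_p} = 1$ by Corollary~\ref{cor:commutatori}(i).

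Finally, I verify that $(L_0,L_0)$ acts trivially on $\g_1^{-\ov\a_p}$. The simple roots of the derived Levi $\gol_0 = \g_0(0)$ are exactly the $\a \in \Pi_0$ with $\a(h_\calS) = 0$, equivalently with $(\a,\a_p) = 0$. For any such $\a$ one has $(\a,\ov\a_p) = (\a,\a_p) = 0$, since $\a \in \h_0^*$ pairs trivially with $\d'$. Hence $\langle -\ov\a_p, \a^\vee\rangle = 0$ and $\a$ annihilates a generator of $\g_1^{-\ov\a_p}$, giving the required triviality.

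The delicate step is isolating the real roots $\be$ with $\langle \be, \a_p^\vee\rangle = -2$: both the length and non-complexity hypotheses on $\a_p$ enter essentially, the first to drive $\xi$ into the radical of the form, and the second to exclude any weight in $\Phi_0$ from $\g(4)$.
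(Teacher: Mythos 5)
Your proposal is correct and takes essentially the same route as the paper's proof: both hinge on the identity $\a(h_\calS)=-2\langle\a,\a_p^\vee\rangle$ of Theorem \ref{minimalnonspherical}, kill $\g(3)$ by parity, use the longness of $\a_p$ to force the only weight of $\g(4)$ to be $-\ov\a_p$, use non-complexity to exclude $\Phi_0$, and conclude triviality under $(L_0,L_0)$ from the orthogonality of the simple roots of $L_0$ to $\a_p$. The only difference is that you spell out, via positive semidefiniteness of the affine form on the root lattice, the step the paper dispatches in one line (``since $\a_p$ is long, it follows that $\a=-\ov\a_p$''), and this elaboration is accurate.
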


\begin{proof}
By Theorem \ref{specialisnotspherical},  we have $\a(h_\calS) = -2 \langle \a, \a_p^\vee \rangle$ for all $\a \in \Phi$. In particular $\g(3)=\{0\}$. If $\g(4)^\a\ne\{0\}$, then  $\langle\a,\alpha_p^\vee\rangle=-2$. Since $\a_p$ is long, it follows that $\a=-\ov\a_p$. Since $\a_p$ is non-complex,   $\g_0(4)=\{0\}$ and 
$  \g_1(4)=\gog_1^{-\ov \a_p}$.

By definition, $L_0$ is the Levi subgroup of $G_0$ whose set of simple roots is
$$
	\Pi_{00} = \bigcup_\grS \{ \a \in \grS \st \a(h_\calS) = 0\}.
$$
Let $\grS \subset \Pi_0$ be a component. Recall that $\a_\grS \in \grS$ is the unique simple root non-orthogonal to $\a_p$. On the other hand by Theorem \ref{minimalnonspherical} we have $\a(h_\calS) = -2 \langle \a, \a_p^\vee \rangle$, therefore $\Pi_{00} = \bigcup_{\grS} (\grS \setminus \{\a_\grS\})$ and it follows that every simple root of $L_0$ is orthogonal to $-\ol\a_p$. This show that $\g_1(4) = \g_1^{-\ol \a_p}$ is the trivial one-dimensional representation of $(L_0,L_0)$.
\end{proof}

\begin{corollary}\label{complexity}
Let $\calS \subset \calC^1_\s \setminus \{\a_p\}$ be an orthogonal subset of maximal cardinality, then $c_{G_0}(G_0 x_\calS) = 1$. In particular,  $c_{G_0}(G_0 \aa_p) = 1$. 
\end{corollary}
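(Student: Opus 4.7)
The plan is to reduce the computation to Panyushev's complexity additivity formula (\ref{eq:complexity-panyushev}) and then exploit Propositions \ref{prop:spherical-modules} and \ref{prop:g1(3)andg1(4)}. Note first that, as observed just before Subsection \ref{51}, Theorem \ref{teo:B0-orbite} combined with Theorem \ref{minimalnonspherical} implies that $G_0 x_\calS$ is the open $G_0$-orbit of $G_0 \aa_p$; since the complexity of an irreducible $G_0$-variety coincides with that of any of its $G_0$-stable dense open subsets, the second assertion follows from the first. So it suffices to establish $c_{G_0}(G_0 x_\calS) = 1$ for the chosen maximal orthogonal subset $\calS$.

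To apply Proposition \ref{prop:spherical-modules} I first need to check that $\height(x_\calS) \leq 4$ and $\g_0(4) = 0$. The latter is part of Proposition \ref{prop:g1(3)andg1(4)}. For the former, Theorem \ref{minimalnonspherical} gives $\a(h_\calS) = -2\langle \a, \a_p^\vee \rangle$ for all $\a \in \Phi$, and since $\a_p$ is long we have $\langle \a, \a_p^\vee \rangle \in \{0, \pm 1, \pm 2\}$, so $\g(n) = 0$ for $|n| > 4$. With the hypotheses verified, Proposition \ref{prop:spherical-modules} yields that $\g_1(2)$ is a spherical $L_0$-module, i.e.\ $c_{L_0}(\g_1(2)) = 0$.

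With this in hand, formula (\ref{eq:complexity-panyushev}) specializes to $c_{G_0}(G_0 x_\calS) = c_M(\g_1(\geq 3))$. By Proposition \ref{prop:g1(3)andg1(4)}, $\g_1(\geq 3) = \g_1(4) = \g_1^{-\ov\a_p}$ is one-dimensional, so trivially $c_M(\g_1(\geq 3)) \leq \dim \g_1(\geq 3) = 1$, giving $c_{G_0}(G_0 x_\calS) \leq 1$. The reverse inequality is exactly the non-sphericity of $G_0 x_\calS$ proved in Theorem \ref{minimalnonspherical}, and equality follows. I do not anticipate any real obstacle: all the heavy lifting has already been carried out in the preceding propositions, and the argument is a direct application of Panyushev's additivity formula once one notices that $\g_1(\geq 3)$ collapses to a single line.
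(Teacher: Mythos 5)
Your proposal is correct and follows essentially the same route as the paper: apply Panyushev's additivity formula (\ref{eq:complexity-panyushev}), use Proposition \ref{prop:spherical-modules} to get $c_{L_0}(\g_1(2))=0$, Proposition \ref{prop:g1(3)andg1(4)} to see that $\g_1(\geq 3)=\g_1(4)$ is one-dimensional (so $c_M(\g_1(4))\leq 1$), and the non-sphericity from Theorem \ref{minimalnonspherical} for the lower bound $c_{G_0}(G_0 x_\calS)\geq 1$. Your explicit verification of the hypotheses of Proposition \ref{prop:spherical-modules} and of the passage from $G_0 x_\calS$ to $G_0\aa_p$ only makes explicit what the paper records in the discussion immediately preceding Subsection \ref{51}.
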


\begin{proof}
 Proposition \ref{prop:spherical-modules}  implies that $c_{L_0}(\g_1(2)) = 0$, whereas  Proposition \ref{prop:g1(3)andg1(4)} shows that $\g_1(\geq \!\!3) = \g_1(4)$ is one-dimensional, and by Theorem \ref{minimalnonspherical} together with (\ref{eq:complexity-panyushev}) we get $1 \leq c_{G_0}(G_0 x_\calS) = c_M(\g_1(4)) \leq 1$.
\end{proof}

%%%%%%%%%%%%%%%%%%%%%%%%%%%%%%%%
%%%%%%%%%%%%%%%%%%%%%%%%%%%%%%%%
\section{Classification of $B_0$-stable subalgebras of $\g_1$}\label{cla}
%%%%%%%%%%%%%%%%%%%%%%%%%%%%%%%%
%%%%%%%%%%%%%%%%%%%%%%%%%%%%%%%%

In this section $\g$ is a semisimple Lie algebra and $\s$ is an (indecomposable) involution of $\g$. Theorem \ref{teo:B0-orbite} prompts us to study the orbits $G_0 x \subset \gog_0 \oplus \gog_1$ with $x$ of the form
\begin{equation}\label{xs}x_\calS = \sum_{\gamma\in\calS_0} x_0^{\gamma} + \sum_{\gamma\in\calS_1} x_1^{\gamma},\end{equation}
where $\calS_0 \subset \Phi_0$, $\calS_1 \subset \Phi_1$, $\calS_0\cap\calS_1=\emptyset$,  and $\calS_0 \cup \calS_1$ is  a set of strongly orthogonal weights in $\D=\D_0 \cup \D_1$. We denote by $\calS$ the disjoint union of $\calS_0$ and $\calS_1$. %(notice that $\calS_0$ and $\calS_1$ might intersect).
 Notice that all $x_{\mathcal S}$ are nilpotent: indeed setting $y_{\mathcal S} = \sum_{\gamma\in\calS_0} y_0^{-\gamma} + \sum_{\gamma\in\calS_1} y_1^{-\gamma}$ and $h_{\mathcal S} = \sum_{\gamma\in\calS_1} \gamma^\vee + \sum_{\gamma\in\calS_2} \gamma^\vee$ we get a $\mathfrak{sl}(2)$-triple $\{x_\calS,h_\calS,y_\calS\}$. 
 %\footnote{Ho cambiato un po' questo periodo}.

Let $m$ be the height of $x_\calS$. Since $h_\calS \in\h_0$, we can choose a weight $\a\in\D$ such that $\g^\a \subset \g(m)$, namely such that
$$
\gra(h_\calS) = \sum_{\gamma\in\calS} \langle\a,\gamma^\vee\rangle = m.
$$
Set $\calS^+(\a)=\{\gamma\in\calS\st \langle\a,\gamma^\vee\rangle > 0\}$, so that 
\begin{equation}\label{Gammaaffine}
\sum_{\gamma\in\calS^+(\a)} \langle\a,\gamma^\vee\rangle \geq m.
\end{equation}

Define $\hat\a=\a$ if $\a \in \Phi_0$, and $\hat \a=\d'+\a$ if $\a\in\Phi_1\setminus\Phi_0$. Choose for each $\gamma\in \calS^+(\a)$ a root $\hat\gamma\in\Da$ such that $\overline{\hat\gamma}=\gamma$, and define $\widehat{\calS^+(\a)}=\{\hat\gamma\st \gamma\in\calS^+(\a)\}$ and $\Pi_{\calS,\a} = \widehat{\calS^+(\a)}\cup\{-\hat \a\}$. As $\calS^+(\a)\cup\{-\a\}\subset \D$, we have that $\Pi_{\calS,\a} \subset \Da^{re}$, so the matrix $A(\calS,\a) = (\langle\be,\xi^\vee\rangle)_{\be,\xi\in\Pi_{\calS,\a}}$ is a generalized Cartan matrix, which is of finite or affine type by Lemma \ref{affine type}.

\begin{lemma}	\label{lemma:height 4}
Let $\calS\subset \D$ be a strongly orthogonal subset, and let $\a\in \D$ be such that $\a(h_\calS) = \height(x_\calS)$. Then the following statements hold.
 \begin{itemize}
 \item[i)] \label{charsph1} $\height(x_\calS)$ is less than or equal to the degree of  $-\hat\a$ in $\Pi_{\calS,\a}$. In particular $\height(x_\calS)\leq 4$.
 \item[ii)] If $G x_\calS$ is not spherical, then $\height(x_\calS) = 4$ and $\Pi_{\calS,\a}$ is of affine type, in which case its diagram is one of those listed in Table \ref{diagramsns}.
 \end{itemize}
 \end{lemma}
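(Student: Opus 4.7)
The plan is: (a) check that $\Pi_{\calS,\a}$ satisfies the hypotheses of Lemma~\ref{affine type}, ensuring that $A(\calS,\a)$ is a generalized Cartan matrix of finite or affine type; (b) combine (\ref{Gammaaffine}) with the identity $\langle \a,\gamma^\vee\rangle = \langle \hat\a,\hat\gamma^\vee\rangle$ to bound $\height(x_\calS)$ by the degree of the vertex $-\hat\a$ in the associated Dynkin diagram; and (c) for part (ii), use Panyushev's $G$-sphericity criterion to force the height to be $4$, then classify the finite and affine Dynkin diagrams admitting a vertex of degree $4$.

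For (a), the key observations are that, since $(\d',\d')=(\d',\h_0^*)=0$ and the elements of $\calS$ are pairwise strongly orthogonal in $\D$, we have $(\hat\gamma,\hat\gamma')=(\gamma,\gamma')=0$ for distinct $\gamma,\gamma'\in\calS^+(\a)$, while $(\hat\gamma,-\hat\a)=-(\gamma,\a)<0$ by the definition of $\calS^+(\a)$. Lemma~\ref{affine type} then yields that $A(\calS,\a)$ is a generalized Cartan matrix of finite or affine type. For (b), one rewrites (\ref{Gammaaffine}) as
$$m \leq \sum_{\gamma \in \calS^+(\a)} \langle \a,\gamma^\vee\rangle = \sum_{\hat\gamma \in \widehat{\calS^+(\a)}} |\langle -\hat\a,\hat\gamma^\vee\rangle|,$$
using the identity recalled in Section~\ref{ta}; the right-hand side is precisely the degree of $-\hat\a$ in $\Pi_{\calS,\a}$. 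Inspection of the classification of finite and affine Dynkin diagrams shows that every vertex has degree at most $4$, yielding the second assertion of (i).

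For (ii), if $Gx_\calS$ is not spherical then by Panyushev's criterion \cite{Pa2} one has $\height(x_\calS)\geq 4$, and combined with (i) this forces $\height(x_\calS)=4$ and the degree of $-\hat\a$ in $\Pi_{\calS,\a}$ to equal $4$. Since in any finite type Dynkin diagram every vertex has degree at most $3$, the matrix $A(\calS,\a)$ must then be of affine type. The principal obstacle is the final step, namely singling out the affine diagrams with a degree-$4$ vertex: splitting on the partitions $(1,1,1,1)$, $(1,1,2)$, $(2,2)$, $(1,3)$, $(4)$ of $4$ into edge multiplicities at that vertex and going through the affine families, one obtains exactly $D_4^{(1)}$, $B_3^{(1)}$, $D_3^{(2)}$, $G_2^{(1)}$, $A_2^{(2)}$, which are the five diagrams of Table~\ref{diagramsns}. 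This enumeration is essentially dual to the equality $k_\Xi\sum_{\xi\ne\a}a_{\Xi,\xi}=4$ already recorded in Remark~\ref{oss:somma4}.
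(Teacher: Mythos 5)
Your strategy is the same as the paper's, and parts (a), (b) and the first half of (ii) go through: just note that $\sum_{\gamma}|\langle \hat\a,\hat\gamma^\vee\rangle|$ is in general only bounded \emph{above} by the degree of $-\hat\a$, not equal to it, since the number of edges joining $-\hat\a$ to $\hat\gamma$ is $\max(|\langle\hat\a,\hat\gamma^\vee\rangle|,|\langle\hat\gamma,\hat\a^\vee\rangle|)$; the chain $\height(x_\calS)\le\sum_{\gamma}|\langle\hat\a,\hat\gamma^\vee\rangle|\le\deg(-\hat\a)\le 4$ still gives (i). The genuine gap is in your final enumeration for (ii). The affine diagrams possessing a vertex of degree $4$ whose neighbours are pairwise non-adjacent are \emph{not} only the five of Table \ref{diagramsns}: the list also contains $C_2^{(1)}$, $A_4^{(2)}$, $A_5^{(2)}$ and $D_4^{(3)}$. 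For instance the middle node of $C_2^{(1)}$ ($\circ\Rightarrow\circ\Leftarrow\circ$) carries two double edges, hence has degree $4$ with non-adjacent neighbours, but it is \emph{short} relative to both of them; similarly for the degree-$4$ node of $A_5^{(2)}$ and $D_4^{(3)}$. So your partition argument, read literally on edge multiplicities, does not produce ``exactly'' the five diagrams claimed — under the partition $(2,2)$ alone you would also find $C_2^{(1)}$ and $A_4^{(2)}$ — and nothing in your write-up excludes the extra cases.

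The missing step is to observe that when $\height(x_\calS)=4$ all the inequalities above are forced to be equalities, so that $|\langle\hat\a,\hat\gamma^\vee\rangle|$ must equal the number of edges joining $-\hat\a$ to $\hat\gamma$ for \emph{every} $\gamma\in\calS^+(\a)$; equivalently, $-\hat\a$ is at least as long as each of its neighbours, i.e.\ it is a long node of $\Pi_{\calS,\a}$. It is this longness condition, combined with degree $4$, that eliminates $C_2^{(1)}$, $A_4^{(2)}$, $A_5^{(2)}$, $D_4^{(3)}$ and leaves precisely the five diagrams of Table \ref{diagramsns} (all of which have their branch node long, consistently with Remark \ref{oss:somma4}, whose statement already presupposes that the distinguished node is long). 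The paper makes this explicit, and the property is used again later (e.g.\ in Lemma \ref{gzero4=0} and in the proof of Theorem \ref{MT}), so it should be stated rather than absorbed silently into the enumeration.
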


\begin{proof}
i) If $\grg \in \calS^+(\a)$, notice that $\langle\a,\gamma^\vee\rangle = \langle \hat\a,\hat\gamma^\vee\rangle$, and that this number   is less  or equal to the number of edges connecting $-\hat \a$ with $\hat \gamma$. Therefore the claim follows by formula (\ref{Gammaaffine}), by observing  that  the degree of  any node in a finite or affine diagram is at most $4$.

ii) By \cite[Theorem 3.1]{Pa1}, if $G x_\calS$ is not spherical then $\height(x_\calS)\geq 4$, hence $\height(x_\calS)= 4$ by i). Since in a Dynkin diagram of finite type any node has degree at most $3$, it follows that $\Pi_{\calS,\a}$ is affine. Moreover, if $\langle\hat\a,\hat\gamma^\vee \rangle$ is less than the number of edges connecting $\hat\a$ and $\hat\gamma$ for some $\grg \in \calS^+(\a)$, then $\sum_{\gamma\in\calS^+(\a)}\langle \hat\a,\hat\gamma^\vee \rangle<4$. Thus, for all $\grg \in \calS^+(\a)$, $\langle\hat\a,\hat\gamma^\vee\rangle$ equals the number of edges connecting $\hat\a$ and $\hat\gamma$, and $\hat\a$ is long in $\Pi_{\calS,\a}$. It follows that the diagram $\Pi_{\calS,\a}$ is one of the affine diagrams listed in Table \ref{diagramsns}.
\end{proof}

In the next result we use the main idea of Proposition 2.2 of \cite{PR}.

\begin{lemma}\label{gzero4=0}
Let $\calS\subset \D$ be a strongly orthogonal subset and suppose that $\calS \subset \Psi(\aa)$ for some $\aa\in\mathcal I^\s_{ab}$. Let $\a\in\D$ be such that $\a(h_\calS) = \height(x_\calS)$, then $\a\in\Phi_1\setminus \D_0$ and
\[
\sum_{\gamma\in\calS^+(\a)}\langle\a,\gamma^\vee\rangle \hat\gamma=k\d+2\hat\a.
\]
In particular, $\height_0(x_\calS) \leq 3$ and $\sum_{\gamma\in\calS^+(\a)} \langle \be, \gamma^\vee\rangle =2 \langle \be,\a^\vee\rangle$ for all $\be\in\Phi \cup \{0\}$.
\end{lemma}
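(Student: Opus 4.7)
By Lemma~\ref{lemma:height 4}, $\height(x_\calS)\leq 4$ and the matrix $A(\calS,\a)$ is a generalized Cartan matrix of finite or affine type. The new ingredient beyond Lemma~\ref{lemma:height 4} is the hypothesis $\calS\subset\Psi(\aa)$ for some $\aa\in\mathcal I^\s_{ab}$, which supplies both $[\aa,\aa]=0$ and $[\b_0,\aa]\subset\aa$, and hence $\mathrm{ad}(x_\calS)^2(\b_0)=0$. This identity will be the technical engine of the proof and mirrors the key step of Panyushev--R\"ohrle's Proposition~2.2 of \cite{PR}, adapted to the graded setting.

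First I would establish $\a\in\Phi_1\setminus\Phi_0$ by contradiction. Suppose $\a\in\Phi_0$, so that a weight vector $e\in\g_0^\a$ sits in the top grade $\g_0(m)$, where $m=\height(x_\calS)$. Using the normal $\mathfrak{sl}(2)$-triple $\{x_\calS,h_\calS,y_\calS\}$ and descending along the corresponding $\mathfrak{sl}(2)$-string, $\mathrm{ad}(y_\calS)^{m-2}e$ is a root vector in grade $-m+4$; a $W_0$-translate places this vector inside $\b_0$, and then the vanishing $\mathrm{ad}(x_\calS)^2(\b_0)=0$ contradicts the fact that $e$ sits at the top of an $\mathfrak{sl}(2)$-string of length $m+1\geq 5$. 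This forces $\a\in\Phi_1\setminus\Phi_0$, so $\hat\a=\d'+\a$.

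With $\a\in\Phi_1\setminus\Phi_0$ in hand, Lemma~\ref{lemma:height 4}(ii) places us in the affine case, so $\Pi_{\calS,\a}$ is one of the diagrams in Table~\ref{diagramsns}. The label vector $(a_{\calS,\xi})$ spans the kernel of $A(\calS,\a)$, hence the combination $\sum_{\xi\in\Pi_{\calS,\a}} a_{\calS,\xi}\hat\xi$ is isotropic in $\ha^*$ with trivial bar-projection; such a vector must lie on the line $\C\d$, so it equals $c\d$ for some $c\in\mZ_{>0}$. Substituting the label formulas $k_\calS a_{\calS,\hat\gamma}=\langle\a,\gamma^\vee\rangle$ and $k_\calS a_{\calS,-\hat\a}=2$ from Remark~\ref{oss:somma4} and using $k\d=2\d'$, a $\d'$-coefficient comparison pins down $ck_\calS=k$, giving the equation $\sum_{\gamma\in\calS^+(\a)}\langle\a,\gamma^\vee\rangle\hat\gamma=k\d+2\hat\a$.

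The two ``in particular'' statements are quick corollaries. If $\height_0(x_\calS)=4=\height(x_\calS)$, one could take $\a\in\Phi_0$ achieving this maximum, contradicting the first assertion; hence $\height_0(x_\calS)\leq 3$. For the identity $\sum_{\gamma}\langle\be,\gamma^\vee\rangle=2\langle\be,\a^\vee\rangle$, barring the main equation yields $\sum_\gamma \langle\a,\gamma^\vee\rangle\gamma=2\a$ in $\h_0^*$; since in each diagram of Table~\ref{diagramsns} the vertex $-\hat\a$ is long and the edge relation $\langle\a,\gamma^\vee\rangle(\gamma,\gamma)=(\a,\a)$ holds uniformly, one obtains $(\a,\gamma)=(\a,\a)/2$ for every $\gamma\in\calS^+(\a)$; pairing with $\be^\vee$ and dividing by $(\a,\a)/2$ converts the barred identity into the symmetric form $\sum\langle\be,\gamma^\vee\rangle=2\langle\be,\a^\vee\rangle$. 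The main obstacle is the first step --- excluding $\a\in\Phi_0$ --- which requires careful tracking of how elements of $\b_0$ interact with the $\mathfrak{sl}(2)$-module generated by a top-grade weight vector in $\g_0$, and it is here that the adaptation of the PR abelian-ideal argument is most delicate.
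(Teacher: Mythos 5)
Your handling of the displayed identity and of the two ``in particular'' claims follows the paper: the null vector of the affine Cartan matrix $A(\calS,\gra)$ produces a multiple of $\d$, the coefficient is pinned down by comparing $\d'$-components (the paper counts $\height_\s$, which is the same thing), and the final identity comes from $\langle\grg,\gra^\vee\rangle=1$ because $-\hat\gra$ is the long node of $\Pi_{\calS,\gra}$. The gap is in the step you yourself flag as delicate, the exclusion of $\gra\in\Phi_0$, and it is genuine. Set $m=\height(x_\calS)$ and suppose $\gra\in\Phi_0$, $e\in\gog_0^\gra\subset\gog(m)$. First, your ``string of length $m+1\geq 5$'' is unsupported: nothing in the hypotheses forces $m=4$, and for $m\leq 3$ your argument produces no contradiction at all (so at best you treat the case $m=4$). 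Second, and more seriously, even when $m=4$ the vector $v=\mathrm{ad}(y_\calS)^{2}e$ lies in $\gog_0(0)$, the Levi of the parabolic defined by $h_\calS$; its weight components have weights $\gra-\grg_i-\grg_j$, which need not lie in $\Phi_0^+\cup\{0\}$, so $v\notin\b_0$ in general and the identity $\mathrm{ad}(x_\calS)^2(\b_0)=0$ does not apply to it. In fact, since $\mathrm{ad}(x_\calS)^2$ kills $\h_0\oplus\n_0$, the nonvanishing of $\mathrm{ad}(x_\calS)^2v=ce$ is carried entirely by the $\n_0^-$-part of $v$ --- exactly the part you cannot control. The proposed repair by a ``$W_0$-translate'' fails: conjugating by $w\in W_0$ replaces $\aa$ by a subalgebra stable under $w\b_0w^{-1}$, not under $\b_0$, so the hypothesis $\mathrm{ad}(x_\calS)^2(\b_0)=0$ is lost in the process.

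The missing idea, which is the actual content of the paper's proof, is a positivity statement obtained from the affine root combinatorics rather than from $\mathfrak{sl}(2)$-theory. Running the isotropic-vector computation first, one finds that if $\gra\in\D_0$ then $\sum_{i=1}^4\hat\grg_i=2\beta$ with $\beta=k\d+\hat\gra$, where $\{\hat\grg_1,\dots,\hat\grg_4\}$ lists $\widehat{\calS^+(\gra)}$ with multiplicities $\langle\hat\gra,\hat\grg^\vee\rangle$. Each $\beta-\hat\grg_i-\hat\grg_j$ ($i\neq j$) is a root (abelianness of $\aa$ is used here to exclude that $\hat\grg_i+\hat\grg_j$ is a root), and since $\sum_{i<j}(\beta-\hat\grg_i-\hat\grg_j)=6\beta-3\sum_i\hat\grg_i=0$, at least one of these roots is positive, hence lies in $\Dap_0$. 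Then $\ov{\beta-\hat\grg_i}=\gra-\grg_i=\grg_j+(\gra-\grg_i-\grg_j)$ belongs to $\Psi(\aa)$ by $B_0$-stability (property (A3)), and $\grg_i+(\gra-\grg_i)=\gra\in\Phi_2$ contradicts (A2). If you wish to keep your Lie-algebra formulation you would have to prove that $\mathrm{ad}(x_\calS)^2$ annihilates all of $\gog_0(0)$, not just $\b_0$, and that is essentially equivalent to the positivity argument above; as written, your step 1 does not close.
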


\begin{proof}
Since $\calS^+(\a) \subset \D_1$ we can assume that $\hat \gamma=\d'+\gamma$ for all $\gamma \in\calS^+$.
%Arguing with Remark \ref{oss:somma4} 
As in the proof of Theorem \ref{specialisnotspherical}, we find that, if $\Pi_{\calS,\a} = \Xi$, then $k_{\Xi}(\sum_{\xi\in\Xi} a_{\Xi, \xi}\xi)$ is an isotropic vector, hence it is a multiple of $\d$, say
\begin{equation}\label{occurrences}
k_{\Xi}(\sum_{\xi\in\Xi} a_{\Xi,\xi} \xi) = s\d.
\end{equation}
The coefficient $s$ can be computed by counting the occurrences of roots in $\D_1^+$  in the left hand side of (\ref{occurrences}). It follows   that $s=2k$ if $\a\in\D_0$, and $s=k$ if $\a \in \D_1 \setminus \D_0$.
% \footnote{Non capisco perch\'e.} We show that the first case never happens.

Define a multiset $\{\hat\gamma_i\st i=1,\cdots,4\}$ by listing every $\hat\gamma \in \widehat{\calS^+(\a)}$ with multiplicity $\langle\hat\a,\hat\gamma^\vee \rangle = k_\Xi a_{\Xi,\hat\gamma}$ (see Remark \ref{oss:somma4}). Set $\beta=k\d+\hat\a$. Since $\hat\a=\a\in\D_0$, it follows that $\grb$ is a root. Notice that $\beta-\hat\gamma_i-\hat\gamma_j$ is a root for all $i\neq j$. If indeed $\hat\gamma_i=\hat\gamma_j$ for some $i \neq j$, then $\langle\hat\a,\hat\gamma_i^\vee \rangle \ge 2$, hence $\langle \beta,\hat\gamma_i^\vee \rangle \geq 2$ and $\beta-2\hat\gamma_i$ is a root. If instead $\hat\gamma_i \ne \hat\gamma_j$, then $\langle \hat\a,\hat\gamma_i^\vee \rangle > 0$ and $\langle \hat\a-\hat\gamma_i,\hat\gamma_j^\vee \rangle > 0$, so $\beta-\hat\gamma_i-\hat\gamma_j$ is either a root or zero. On the other hand $\hat\gamma_i+\hat\gamma_j$ cannot be a root because $\aa$ is abelian, therefore $\beta-\hat\gamma_i-\hat\gamma_j$ is a root also in this case.

Since
  $
  \sum_{i=1}^4\hat\gamma_{i} = \sum_{\gamma\in\calS^+(\a)} k_\Xi a_{\Xi,\hat\gamma} \hat\gamma = 2k\d+2\hat\a = 2\beta
  $, we have
 $$
 \sum_{i<j}(\beta-\hat\gamma_{i}-\hat\gamma_{j}) = 6\beta - 3\sum_{i=1}^4\hat\gamma_{i}=0,
 $$
thus $\beta-\hat\gamma_{i}-\hat\gamma_{j}$ is a positive root for some $i <j$. Since $\hat\gamma_{i}\in\Dap_1$ for each $i$, $\beta-\hat\gamma_{i}-\hat\gamma_{j}$ is a root in $\Dap_0$. Since $\langle \beta,\hat\gamma_i^\vee \rangle>0$ and $\height_\s(\beta-\hat\gamma_i)=1$, $\beta-\hat\gamma_{i} \in \widehat \Phi_1$. Since $\beta-\hat\gamma_{i}=\hat\gamma_{j}+(\beta-\hat\gamma_{i}-\hat\gamma_{j})$, we see that  $\overline{\beta-\hat\gamma_{i}}\in \Psi(\aa)$. Since $\overline{\beta-\hat\gamma_{i}}$ and $\gamma_{i}$ are both in $\Psi(\aa)$ and $\beta-\hat\gamma_{i}+\hat\gamma_{i}=\beta$, we reach a contradiction since $\aa$ is abelian.

To prove the last claim, notice that $\a$ is long in $\Pi_{\calS,\a}$, hence $\langle\gamma,\a^\vee \rangle=1$. If $\be\in\Phi_0 \cup \Phi_1 \cup\{0\}$, we get then the equality
\[\sum_{\gamma\in\calS^+(\a)} \langle \be, \gamma^\vee\rangle = \frac{2}{||\a||^2}\sum_{\gamma\in\calS^+(\a)} \langle \a,\gamma^\vee \rangle (\be, \hat\gamma) = 2 \frac{(\be,k\d+2\hat\a)}{||\a||^2} = 2 \langle \be, \a^\vee \rangle. \]
\end{proof}

As a consequence of Lemma \ref{gzero4=0}, we get the following result.

%\begin{corollary}
%Let $\aa$ be a $\b_0$-stable abelian subalgebra of $\g_1$, let $x\in \aa$. In the grading defined by a normal triple  containing $x$ as a nilpositive element, we have $\g_0(4)=0$.
%\end{corollary}

%\begin{proof}
%Since the elements $x_\calS$ with $\calS \subset \Phi_1^+$ give a complete set of representatives for the $B_0$-orbits in $\aa$, the claim follows since the height of an element is invariant under the %adjoint action.
%\end{proof}

%\begin{corollary} 	\label{cor:altezza 4}
%Let $\aa$ be a $B_0$-stable abelian subalgebra of $\g_1$, and let $x \in \aa$, then $Gx$ is spherical if and only if $G_0 x$ is spherical if and only if $\mathrm{ad}(x)^4_{|\g_1}=0$. In particular, $G_0 \aa$ %is spherical if and only if $G\aa$ is spherical. 
%\end{corollary}

%\begin{corollary}\label{73}
%Let $x\in\g_1$ be a nilpotent element. There exists a Borel subgroup $B_0 \subset G_0$ and a $B_0$-stable subalgebra $\aa$ in $\g_1$ with $x \in \aa$ if and only if 
%$$
%\height(x)\le 4 \text{ and }\g(4)=\g_1(4)
%$$
%(in the grading defined by a normal triple  containing $x$ as a nilpositive element).
%\end{corollary}

\begin{corollary}\label{73}
Let $\aa \in \mathcal I^\s_{ab}$ and let $x \in \aa$, then $\height(x) \leq 4$ and $\height_0(x) \leq 3$. In particular, $Gx$ is spherical if and only if $\height_1(x)\leq 3$,  if and only if $G_0x$ is spherical.
\end{corollary}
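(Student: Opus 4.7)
The plan is to reduce to the canonical orthogonal representatives $x_\calS$ provided by Theorem \ref{teo:B0-orbite} and then invoke Lemmas \ref{lemma:height 4} and \ref{gzero4=0}, closing the argument with Panyushev's height criteria for sphericity recalled in the introduction. Since the heights $\height(x)$, $\height_0(x)$, $\height_1(x)$ are $G_0$-invariant, hence constant on $B_0$-orbits, Theorem \ref{teo:B0-orbite} lets me assume $x = x_\calS = \sum_{\gamma \in \calS} x_1^\gamma$ for some orthogonal subset $\calS \subset \Psi(\aa) \subset \Phi_1$. By Lemma \ref{prop:orthogonality} such a $\calS$ is automatically strongly orthogonal in $\Phi$, so the hypotheses of Lemma \ref{lemma:height 4}(i) and Lemma \ref{gzero4=0} are both satisfied; the former gives $\height(x) = \height(x_\calS) \leq 4$ and the latter gives $\height_0(x) = \height_0(x_\calS) \leq 3$, which is the first assertion.

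For the ``in particular'' clause I would chain together three facts. First, Panyushev's adjoint criterion yields $Gx$ spherical $\iff \height(x) \leq 3$. Second, from the identity $\height(x) = \max\{\height_0(x), \height_1(x)\}$ combined with the bound $\height_0(x) \leq 3$ just proved, the condition $\height(x) \leq 3$ is equivalent to $\height_1(x) \leq 3$. Third, Panyushev's implications recalled in the introduction state that $\height(x) \leq 3 \Longrightarrow G_0 x$ spherical $\Longrightarrow \height_1(x) \leq 3$; combined with the previous equivalence, this closes the loop $\height_1(x) \leq 3 \iff G_0 x$ spherical. Putting the three steps together produces
\[
Gx \text{ spherical} \iff \height(x) \leq 3 \iff \height_1(x) \leq 3 \iff G_0 x \text{ spherical},
\]
which is the desired equivalence.

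I anticipate no serious obstacle: the essential technical work has already been carried out in Lemmas \ref{lemma:height 4} and \ref{gzero4=0}, and the only bookkeeping point is passing between the orthogonality used in Theorem \ref{teo:B0-orbite} and the strong orthogonality in $\Phi$ required by those two lemmas, which is precisely the content of Lemma \ref{prop:orthogonality}.
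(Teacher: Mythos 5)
Your proposal is correct and follows the paper's own route: reduce to $x=x_\calS$ for an orthogonal subset $\calS\subset\Psi(\aa)$ via Theorem \ref{teo:B0-orbite}, apply Lemma \ref{lemma:height 4} for $\height(x)\le 4$ and Lemma \ref{gzero4=0} for $\height_0(x)\le 3$, and then conclude the sphericity equivalences from Panyushev's criteria (the paper simply cites Theorem 5.6 of \cite{Pa2} at this point; your chain through $\height(x)=\max\{\height_0(x),\height_1(x)\}$ and the implications recalled in the introduction is a correct way to spell this out).

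One caveat on your ``bookkeeping'' step. Lemma \ref{prop:orthogonality} does \emph{not} say that an orthogonal subset of $\Psi(\aa)$ is strongly orthogonal in $\Phi$: it says that the lifts $\d'+\gra$, $\d'+\grb$ are strongly orthogonal in $\Da$, which for an involution amounts to $\gra+\grb\notin\Phi_0$ and $\gra-\grb\notin\Phi_0$, and says nothing about $\gra\pm\grb$ lying in $\Phi_1$. When complex weights occur this can actually fail: for $(\g,\g_0)=(\mathfrak{sl}_4,\mathfrak{so}_4)$ the weights $\gre_1-\gre_2$ and $\gre_1+\gre_2$ of $\g_1$ lie in $\Phi_0\cap\Phi_1$, are orthogonal, and together with $2\gre_1$ form the weight set of an element of $\mathcal I^\s_{ab}$, yet their sum $2\gre_1$ is again a weight of $\g_1$, so they are not strongly orthogonal in the sense of the definition in Section 2. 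So your literal claim is false; what saves the argument (and what the paper itself uses silently, since it too applies the Section \ref{cla} lemmas to the merely orthogonal $\calS$ produced by Theorem \ref{teo:B0-orbite}) is that for $\calS\subset\Psi(\aa)\subset\Phi_1$ the proofs of Lemmas \ref{lemma:height 4} and \ref{gzero4=0} only need the normal $\mathfrak{sl}(2)$-triple $\{x_\calS,h_\calS,y_\calS\}$, whose cross brackets vanish because $\gra-\grb\notin\Phi_0$, together with the pairwise orthogonality of the affine lifts $\hat\gamma$ and the fact that $\aa$ is abelian (property (A2)); all of this is exactly what Lemma \ref{prop:orthogonality} and Corollary \ref{cor:commutatori} provide. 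With that justification replacing the strong-orthogonality claim, your proof is complete and coincides with the paper's.
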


\begin{proof}
By Theorem \ref{teo:B0-orbite}, acting with $B_0$ we may assume that $x = x_\calS$ for some orthogonal subset $\calS \subset \Psi(\aa)$. Then by Lemma \ref{lemma:height 4} we get $\height(x)\le 4$, and by Lemma \ref{gzero4=0} we get $\height_0(x) \leq 3$. The last claim follows by \cite[Theorem 5.6]{Pa2}.
\end{proof}

If in previous corollary we take $x$ in the open $B_0$-orbit of $\aa$, then we get the following.

\begin{corollary}\label{74}
Let $\aa \in \mathcal I^\s_{ab}$, then $\aa$ is $G$-spherical if and only if it is $G_0$-spherical.
\end{corollary}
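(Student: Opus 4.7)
The plan is extremely short: Corollary \ref{74} will follow immediately from Corollary \ref{73}, applied pointwise. Indeed, by definition $\aa \in \mathcal I^\s_{ab}$ is $G$-spherical (resp.\ $G_0$-spherical) precisely when, for every $x \in \aa$, the orbit $Gx$ is $G$-spherical (resp.\ the orbit $G_0 x$ is $G_0$-spherical). Corollary \ref{73} establishes exactly the equivalence ``$Gx$ spherical $\iff$ $G_0 x$ spherical'' for each individual element $x \in \aa$, so quantifying this biconditional over $x \in \aa$ yields the statement of Corollary \ref{74}.

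Concretely, I would simply write: suppose $\aa$ is $G$-spherical, fix $x \in \aa$; then $Gx$ is spherical, hence by Corollary \ref{73} the orbit $G_0 x$ is spherical, and since $x$ was arbitrary this proves that $\aa$ is $G_0$-spherical. The converse direction is identical, reversing the roles of $G$ and $G_0$ in the same invocation of Corollary \ref{73}.

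There is no real obstacle here; the substance of the result is packaged entirely inside Corollary \ref{73}, whose proof in turn relies on the height bounds $\height(x)\leq 4$ and $\height_0(x) \leq 3$ (established via Lemma \ref{lemma:height 4} and Lemma \ref{gzero4=0}) together with Panyushev's criterion \cite[Theorem 5.6]{Pa2} characterising $G_0$-sphericity in $\g_1$ by the bound $\height_1(x) \leq 3$. Thus the only thing to check is that the pointwise equivalence of Corollary \ref{73} transports correctly through the universal quantifier in the definition of $G$- and $G_0$-sphericity of $\aa$, which it does trivially.
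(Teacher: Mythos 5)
Your proof is correct and is essentially the paper's own argument: there Corollary \ref{74} is stated as an immediate consequence of Corollary \ref{73}, the only cosmetic difference being that the paper phrases the deduction by taking $x$ in the open $B_0$-orbit of $\aa$, while you apply the pointwise equivalence of Corollary \ref{73} to every $x\in\aa$. Since $G$- and $G_0$-sphericity of $\aa$ are defined by quantifying over all $x\in\aa$, your version is exactly what is needed and nothing is missing.
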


%The previous corollary was already noticed in the introduction of \cite{Pa4}, but no proof was given, and no conceptual proof was known. \\

Recall that $\Pi_1$ contains at most two elements, and that if $\Pia$ is simply laced, then the real roots of $\Da$ are regarded as long. The next result has been proved in \cite{Pa4} as a consequence of a case-by-case inspection. We provide here a conceptual proof that follows from Lemma \ref{gzero4=0} and the results of Section \ref{51}. Note also that Theorem \ref{Panyushev} includes Theorem 2.3 of \cite{PR}.

\begin{theorem}\label{Panyushev}
There exists $\aa\in\mathcal I^\s_{ab}$ such that $G_0\aa$ is not spherical if and only if $\Pi_1=\{\a_p\}$ and $\a_p$ is long and non-complex.
\end{theorem}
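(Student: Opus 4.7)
The implication ``$\Leftarrow$'' follows at once from Theorem~\ref{specialisnotspherical}: the hypotheses $\Pi_1=\{\alpha_p\}$ (which makes $\gog_0$ semisimple) together with $\alpha_p$ long and non-complex are precisely the running hypotheses of Section~\ref{5}, so the special abelian subalgebra $\aa_p$ is well defined and $G_0\aa_p$ is not spherical.

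For ``$\Rightarrow$'', suppose $\aa\in\mathcal I^\s_{ab}$ has $G_0\aa$ non-spherical. By Corollary~\ref{74} the orbit $G\aa$ is also non-spherical, and combining Panyushev's height criterion with Corollary~\ref{73} we find $x\in\aa$ with $\height(x)=4$. Theorem~\ref{teo:B0-orbite} lets us take $x=x_\calS$ for some orthogonal $\calS\subset\Psi(\aa)$. Lemmas~\ref{lemma:height 4}(ii) and~\ref{gzero4=0} then yield $\alpha\in\Phi_1\setminus\Phi_0$ with $\alpha(h_\calS)=4$, an affine diagram $\Pi_{\calS,\alpha}$ from Table~\ref{diagramsns} in which $\hat\alpha=\delta'+\alpha$ is the distinguished long node, and the key identity
\[
\sum_{\gamma\in\calS^+(\alpha)}\langle\alpha,\gamma^\vee\rangle\,\hat\gamma \;=\; k\delta+2\hat\alpha. \tag{$\star$}
\]
Since $\hat\alpha\in\widehat\Phi$ is a positive real root with $\height_\sigma(\hat\alpha)=1$ and $s_i\in\{0,1\}$, its expansion in simple affine roots singles out a unique $\alpha_p\in\Pi_1$ with coefficient $1$, all other elements of $\Pi_1$ appearing with coefficient $0$; the analogous property holds for every $\hat\gamma$, $\gamma\in\calS^+(\alpha)$.

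The plan is to extract the three conclusions from $(\star)$ by coefficient analysis in $\R\Pia$. To rule out the alternative $|\Pi_1|=2$, note that in this (Hermitian) case $\gog_1=\gop^+_\mru\oplus\gop^-_\mru$ is a sum of two abelian nilradicals and the $B_0$-highest weights $\theta_\gog\in\Phi_1^+$ and $-\alpha_q\in\Phi_1^-$ satisfy $\theta_\gog-\alpha_q\in\Phi_0^+$, so $[x_1^{\theta_\gog},x_1^{-\alpha_q}]\neq 0$; any $B_0$-stable abelian $\aa\subset\gog_1$ meeting both summands would therefore violate abelianness, and we are forced to have $\aa\subset\gop^+_\mru$ up to swapping. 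But then for every $x\in\gop^+_\mru$, since $\mathrm{ad}(x)$ annihilates $\gop^+_\mru$ and cycles $\gop^-_\mru\to\gog_0\to\gop^+_\mru\to 0$, one has $\mathrm{ad}(x)^3\gog=0$ and hence $\height(x)\le 2$, contradicting $\height(x_\calS)=4$. Hence $\Pi_1=\{\alpha_p\}$ and $\gog_0$ is semisimple. For the longness of $\alpha_p$, matching $\alpha_p$-coefficients in $(\star)$ gives $ka_p=2$; together with the relation $\alpha=-\ol{\alpha_p}$ forced by $\sum_\gamma\langle\alpha,\gamma^\vee\rangle\gamma=2\alpha$ and $k\delta=2\delta'$, this yields $\hat\alpha=k\delta-\alpha_p$, whence $(\hat\alpha,\hat\alpha)=(\alpha_p,\alpha_p)$; since $\hat\alpha$ is long in $\widehat\Phi$, so is $\alpha_p$. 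Finally, non-complexity of $\alpha_p$ follows automatically from the equivalence (recalled before Definition~\ref{nc}) between long and non-complex real simple affine roots when $\gog$ is simple; the non-simple case $\gog=\mathfrak{k}\oplus\mathfrak{k}$ is settled by direct inspection of the flip involution.

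The main obstacle is the identification $\alpha=-\ol{\alpha_p}$, needed to conclude $\hat\alpha=k\delta-\alpha_p$ and thereby transfer longness from $\hat\alpha$ to $\alpha_p$. The cleanest route is to match all $\alpha_i$-coefficients of $(\star)$ for $\alpha_i\in\Pi_0$, showing that the coefficients $c_i(\hat\alpha)$ must agree with those of $k\delta-\alpha_p$ (namely $ka_i$), using the restricted list of affine diagrams in Table~\ref{diagramsns} and the length identity above; once this is done, the uniqueness of $\alpha_p\in\Pi_1$ with $c_p(\hat\alpha)=1$ closes the argument.
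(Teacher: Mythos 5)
Your ``$\Leftarrow$'' direction is exactly the paper's, but the ``$\Rightarrow$'' direction has two genuine gaps. First, the exclusion of $|\Pi_1|=2$ rests on the claim that a $B_0$-stable abelian $\aa\subset\g_1$ meeting both $\gop^+_\mru$ and $\gop^-_\mru$ must fail to be abelian because $\theta-\gra_q\in\Phi_0^+$; this is false in general, since $\theta-\gra_q$ need not be a root. For the Hermitian pair $(A_3,\gra_2)$ one has $\theta-\gra_2=\gra_1+\gra_3\notin\Phi$, and $\aa=\g^{\gra_1+\gra_2+\gra_3}\oplus\g^{-\gra_2}$ is a $B_0$-stable abelian subalgebra of $\g_1$ meeting both summands (its two weights are even orthogonal). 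So you have not excluded orthogonal sets $\calS\subset\Psi(\aa)$ with weights in both $\Phi_1^+$ and $\Phi_1^-$, and for such mixed $\calS$ your bound $\height(x_\calS)\le 2$ (valid only for $x\in\gop^+_\mru$) does not apply, so height $4$ is not ruled out by your argument. The paper kills this case purely by coefficients: Lemma \ref{gzero4=0} gives $\sum_{\grg\in\calS^+(\gra)}\langle\gra,\grg^\vee\rangle\hat\grg-2\hat\gra=\d$, and since every element of $\Dap_1$ has coefficient $1$ on the chosen $\gra_i\in\Pi_1$, comparing $\gra_i$-coefficients forces $[\d:\gra_i]=2$, impossible because $a_i=1$ when $|\Pi_1|=2$.

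Second, your route to longness is incomplete and partly circular. The identity $(\star)$ only yields $\sum_{\grg}\langle\gra,\grg^\vee\rangle\grg=2\gra$ in $\h_0^*$ after restriction, which does not ``force'' $\gra=-\ov\gra_p$; you concede this yourself by calling the identification the main obstacle and leaving it as a plan of coefficient-matching, so the step is simply not carried out (in the paper this identification is only established much later, inside the proof of Theorem \ref{MT}, and requires real work). Moreover, even granting $\hat\gra=k\d-\gra_p$, your deduction of longness invokes ``$\hat\gra$ is long in $\Da$'', whereas Lemma \ref{lemma:height 4} only gives that $\hat\gra$ is long inside the subsystem generated by $\Pi_{\calS,\gra}$; longness of $k\d-\gra_p$ in $\Da$ is literally equivalent to longness of $\gra_p$, so without an independent argument the step is circular. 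The paper avoids all of this: by Proposition \ref{apweight} one may take $\gra=w(-\ov\gra_p)$ with $w\in W_0$, which immediately gives non-complexity from $\gra\in\D_1\setminus\D_0$ (Lemma \ref{gzero4=0}); and longness is proved by contradiction, choosing a component $\grS$ with $\theta_\grS$ long, so that $\be=w(\theta_\grS)$ satisfies $\be(h_\calS)=2\langle\be,\gra^\vee\rangle\ge 4$, against the bound $\height_0(x_\calS)\le 3$ of Lemma \ref{gzero4=0}. (Your reduction of non-complexity to longness via the remark before Definition \ref{nc}, plus the flip case, would be acceptable, but it depends on the unproved longness.) Until these two points are repaired, the forward implication is not established.
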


\begin{proof}
If $\Pi_1=\{\a_p\}$ with $\a_p$ long and non-complex then by Theorem \ref{specialisnotspherical} the special $B_0$-stable subalgebra $\a_p$ gives rise to a non-spherical variety $G_0\aa_p$.

Let now $\aa\in\mathcal I^\s_{ab}$ and suppose that $G_0\aa$ is not spherical. By Theorem \ref{teo:B0-orbite} there is an orthogonal subset $\calS \subset \Psi(\aa)$ such that $G_0 x_\calS$ is not spherical, and by Lemma \ref{lemma:height 4} we get $\height(x_\calS) = 4$. Fix $\a\in\D$ such that $\a(h_\calS) = 4$, then $\a\in\D_1\setminus \D_0$ by Lemma \ref{gzero4=0}. Set $\hat\a=\d'+\a\in\Da_1$.

If $\Pi_1 = \{\a_i, \a_j\}$ consists of two distinct elements, then $k=1$. Moreover, since $\hat\a\in\Dap_1$, we can assume $[\hat\a:\a_i]=1$ and $[\hat\a:\a_j]=0$. By Lemma \ref{gzero4=0} we have
$$
\sum_{\gamma\in\calS^+(\a)}\langle \a,\gamma^\vee \rangle \hat\gamma-2\hat\a=\d.
$$
Being $(\hat\gamma,\hat\a)>0$, for all $\grg \in \calS^+(\a)$ relation  $\hat\gamma-\hat\a\in\Da\cup\{0\}$ holds. Therefore $[\hat\gamma:\a_i]=1$ for all $\gamma \in \calS^+(\a)$, and we get $[\d:\a_i]=2$ which is absurd.

Thus $\Pi_1=\{\a_p\}$ consists of a single element, and $\g_0$ is semisimple. By Proposition \ref{apweight}, we can choose $\a=w(-\ov\a_p)$ with $w\in W_0$. Since $\a\in\D_1\setminus \D_0$, we see that $\a_p$ cannot be complex. Suppose that $\a_p$ is short. Then also $\a$ is short, and there must be a component $\Sigma \subset \Pi_0$ such that $\theta_\Sigma$ is long. Since $(\theta_\Sigma,\a_p)=(\a_\Sigma, \a_p)<0$, it follows that $\langle \theta_\Sigma,\a_p^\vee \rangle \leq-2$, and setting $\be=w(\theta_\Sigma)$ we get $\langle \be,\a^\vee \rangle \ge2$.
Since $\a(\sum_{\gamma\in\calS^+(\a)}\gamma^\vee) = \height(x_\calS) = 4$, it follows that the element $x_{\calS^+(\a)} = \sum_{\gamma\in\calS^+(\a)}x_\gamma$ is still in $\aa$, and being $\height(x_{\calS^+(\a)}) \geq 4$ its $G_0$-orbit is still non-spherical by Corollary \ref{73}.
Therefore we can assume that $\calS = \calS^+(\a)$. By Lemma \ref{gzero4=0} we get then
$$
\be(\sum_{\gamma\in\calS}\gamma^\vee)=\be(\sum_{\gamma\in\calS^+(\a)}\gamma^\vee)=2 \langle \be,\a^\vee \rangle \ge 4.
$$
As $\grb \in \Phi_0$ and $\g_0(i)=0$ for $i>3$, this is absurd. Therefore $\a_p$ must be long.
\end{proof}

Assume now that $\Pi_1=\{\a_p\}$ with $\a_p$ long and non-complex. We  give a classification of the subalgebras $\aa\in\mathcal I^\s_{ab}$ such that $G_0\aa$ is non-spherical. By Proposition \ref{lemmahermitiano}, for any component $\grS \subset \Pi_0$ there is a unique maximally orthogonal antichain $\mathcal A_\Sigma$ in $\Phi(\Sigma)^+_1$. Regarding $\mathcal C^1_{\s}$ as a poset w.r.t. the dominance order, it is clear from Lemma \ref{N(w*p)}  that $\mathcal C^1_{\s}\setminus\{\a_p\}$ contains a unique maximally orthogonal antichain $\calA$, namely
$$
 \mathcal A=\bigcup_\Sigma\Upsilon^{-1}(\mathcal A_\Sigma).
$$

For $\grG \subset \mathcal C^1_{\s}$, we set $\grG^{\leq_0} = \{\xi\in \mathcal C^1_{\s} \st \xi \leq_0 \eta \  \mbox{ for some } \eta \in \grG\}$.

\begin{lemma}\label{overline w}
There is $\overline w\in\Wab$ such that $N(\overline w) = \calA^{\leq_0}$.
\end{lemma}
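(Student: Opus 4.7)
The plan is to show that $\calA^{\leq_0}$ is the inversion set $N(\ov w)$ of some $\sigma$-minuscule element $\ov w \in \Wa$, by invoking the biconvexity characterization of inversion sets recalled in Section~\ref{bs}. This amounts to verifying (a) $\calA^{\leq_0} \subset \Da_1$, which is immediate from Lemma~\ref{N(w*p)} since $\calA^{\leq_0} \subset \mathcal C^1_\s \subset \Da_1$, and (b) that both $\calA^{\leq_0}$ and its complement $\Dap \setminus \calA^{\leq_0}$ are closed under root addition. The key lever throughout will be Lemma~\ref{N(w*p)}(ii): the set $N(w_p) = \mathcal C^1_\s$ is already biconvex as the inversion set of $w_p \in \Wab$.

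Closure of $\calA^{\leq_0}$ is vacuous: two elements of $\calA^{\leq_0} \subset \Da_1$ sum to a root of $\sigma$-height $2$, which cannot lie in $N(w_p) \subset \Da_1$, and then biconvexity of $N(w_p)$ rules out its lying in $\Dap$ at all. The main work, and the main obstacle, is the closure of the complement. I would argue by contradiction: suppose $\zeta_1, \zeta_2 \in \Dap \setminus \calA^{\leq_0}$ with $\mu := \zeta_1+\zeta_2 \in \calA^{\leq_0}$. Then $\mu \in \Da_1$, and since the hypothesis $\Pi_1 = \{\a_p\}$ forces $s_i = 0$ for $i \neq p$, the function $\height_\s$ simply records the coefficient of $\a_p$ and is non-negative on $\Dap$; hence exactly one of $\zeta_1, \zeta_2$ has $\sigma$-height $0$ and the other $\sigma$-height $1$. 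Say $\height_\s(\zeta_2) = 1$ and $\height_\s(\zeta_1) = 0$. Biconvexity of $N(w_p)$ applied to $\mu \in N(w_p)$ forces one of the $\zeta_i$ into $N(w_p) = \mathcal C^1_\s$; since $\zeta_1 \notin \Da_1$, this must be $\zeta_2$.

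To close the argument I would show $\zeta_2 \in \calA^{\leq_0}$, contradicting the standing assumption. Writing $\zeta_1 = \sum n_i \a_i$ in $\Pia$ with $n_i \geq 0$, the vanishing $\height_\s(\zeta_1) = 0$ gives $n_p = 0$, so $\zeta_1 \in \mN \Pi_0 \subset \mN \Phi_0^+$, and hence $\ov{\zeta_1} \in \mN \Phi_0^+$ in $\h_0^*$; this yields $\zeta_2 \leq_0 \mu$ in $\mathcal C^1_\s$. Combining with the hypothesis $\mu \leq_0 \eta$ for some $\eta \in \calA$ gives $\zeta_2 \leq_0 \eta$, i.e.\ $\zeta_2 \in \calA^{\leq_0}$, the desired contradiction. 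This establishes biconvexity and produces $\ov w \in \Wa$ with $N(\ov w) = \calA^{\leq_0}$; the inclusion in $\Da_1$ then makes $\ov w$ $\sigma$-minuscule, i.e.\ $\ov w \in \Wab$.
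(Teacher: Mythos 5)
Your proposal is correct and follows essentially the same route as the paper: both reduce the statement to biconvexity, observe that closure of $\calA^{\leq_0}$ is vacuous since sums of two elements of $\Da_1$ cannot lie in $N(w_p)\subset\Da_1$, and for the complement use that if $\zeta_1+\zeta_2\in\calA^{\leq_0}\subset\mathcal C^1_\s=N(w_p)$ then exactly one summand lies in $\mathcal C^1_\s$ while the other lies in $\mN\Phi_0^+$, forcing that summand below an element of $\calA$ in the order $\leq_0$ and hence into $\calA^{\leq_0}$. The final step, that $N(\ov w)\subset N(w_p)$ implies $\ov w\in\Wab$, is also the paper's.
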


\begin{proof} 
We show that, if $\zeta,\xi\in\Dap$ are such that $\zeta+\xi\in\calA^{\leq_0}$, then exactly one among $\zeta$ and $\xi$ is in $\calA^{\leq_0}$.
Since $\calA^{\leq_0} \subset \mathcal C^1_\s = N(w_p)$,  then exactly one among $\zeta$ and $\xi$ (say $\zeta$) is in $\mathcal C^1_\s$. Since $\zeta+\xi \in \calA^{\leq_0}$,  then   $\zeta \in \calA^{\leq_0}$. This implies that both $\calA^{\leq_0}$ and its complement are closed under root addition. It follows that there is $\overline w\in\Wa$ such that $N(\overline w) = \calA^{\leq_0}$. Since $N(\overline w)\subset N(w_p)$, it is clear that $\overline w\in\Wab$.
\end{proof}

Let $\ol \aa=\Theta(\ol w)$ (see Proposition \ref{imrn}). 

\begin{theorem}\label{MT}
Suppose that $\Pi_1=\{\a_p\}$ with $\a_p$ long and non-complex, and let $\aa\in\mathcal I^\s_{ab}$. Then $G_0\aa$ is not spherical if and only if $\{-\ov\eta\st \eta\in\mathcal A\}\subset \Psi(\aa)$, if and only if $\ol \aa\subset \aa$.
\end{theorem}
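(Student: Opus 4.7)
The theorem claims two equivalences; I will handle them in turn.

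\emph{Combinatorial equivalence.} Via the poset isomorphism $\Theta$ of Proposition~\ref{imrn}, the condition $\ov\aa\subset\aa$ translates to $N(\ov w)\subset N(w_\aa)$, i.e.\ by Lemma~\ref{overline w} to $\calA^{\leq_0}\subset N(w_\aa)$. The key technical fact is that $N(w_\aa)$ is itself a lower order ideal of $\mathcal C^1_\s$ with respect to $\leq_0$. Given $\eta\leq_0\eta'$ in $\mathcal C^1_\s$ with $\eta'\in N(w_\aa)$, write $\eta'-\eta$ as a sum of simple roots of $\Pi_0$; Lemma~\ref{lemma:dominanza} provides an ordering of these simple roots along which all partial sums remain in $\Dap$, and biconvexity of $N(w_\aa)$ combined with $N(w_\aa)\subset\Da_1$ (while simple roots of $\Pi_0$ belong to $\Da_0$) forces each partial sum, in particular $\eta$, into $N(w_\aa)$. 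With this closure, $\calA^{\leq_0}\subset N(w_\aa)\iff\calA\subset N(w_\aa)\iff\{-\ov\eta:\eta\in\calA\}\subset\Psi(\aa)$.

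\emph{Non-sphericity, $(\Leftarrow)$.} If $\{-\ov\eta:\eta\in\calA\}\subset\Psi(\aa)$ then $x_\calA\in\aa$, and $\calA$ is a maximal-cardinality orthogonal subset of $\mathcal C^1_\s\setminus\{\a_p\}$ by Propositions~\ref{lemmahermitiano}, \ref{prop:anticatene-tipoB}, \ref{prop:anticatene-tipoC}. Evaluating the formula $\a(h_\calA)=-2\langle\a,\a_p^\vee\rangle$ of Theorem~\ref{minimalnonspherical} at $\a=-\ov\a_p$ yields $\height_1(x_\calA)=4$. Since vanishing of $\operatorname{ad}(x)^n_{|\g_1}$ is a closed condition, the set $\{x\in\aa : \height_1(x)\geq 4\}$ is a nonempty open subset of $\aa$, hence dense; combined with the universal bound $\height_1(x)\leq 4$ from Corollary~\ref{73}, the generic $x\in\aa$ satisfies $\height_1(x)=4$. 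Then $G_0x$, and so $G_0\aa=\ol{G_0x}$, is non-spherical by Corollary~\ref{73}.

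\emph{Non-sphericity, $(\Rightarrow)$.} Assuming $G_0\aa$ non-spherical, Corollary~\ref{73} supplies some $x\in\aa$ with $\height_1(x)=4$; by Theorem~\ref{teo:B0-orbite} we may take $x=x_\calT$ for an orthogonal subset $\calT\subset\Psi(\aa)$, corresponding to $\wt\calT\subset N(w_\aa)$. Lemma~\ref{gzero4=0} supplies $\a\in\Phi_1\setminus\Phi_0$ with $\a(h_\calT)=4$ together with the identity controlling the positive-contribution subset $\wt\calT^+(\a)$, while Lemma~\ref{lemma:height 4} identifies $\Pi_{\calT,\a}$ as one of the affine diagrams in Table~\ref{diagramsns}. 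The crucial step is to deduce that $\wt\calT^+(\a)$ is a maximal-cardinality orthogonal subset of $\mathcal C^1_\s\setminus\{\a_p\}$. Granted this, each $\Upsilon$-projection $\Upsilon(\wt\calT^+(\a))\cap\Phi(\Sigma)^+_1$ is maximal orthogonal in $\Phi(\Sigma)^+_1$; since $(\Sigma,\a_\Sigma)$ is a Hermitian pair of tube type by Proposition~\ref{tubet}, Corollary~\ref{cor:anticatena-sottosopra} applies and yields $\mathcal A_\Sigma\subset(\Upsilon(\wt\calT^+(\a))\cap\Phi(\Sigma)^+_1)^{\leq_0}$ for every component $\Sigma\subset\Pi_0$. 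Combining with the downward closure of $N(w_\aa)$ established above, $\calA\subset N(w_\aa)$, as desired.

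The technical heart, and the main obstacle, is the maximality of $\wt\calT^+(\a)$ inside $\mathcal C^1_\s\setminus\{\a_p\}$. I intend to prove it by carrying out, for $\calT$ and $\a$ in place of the maximal orthogonal $\calA$ and the weight $-\ov\a_p$, the case analysis of Proposition~\ref{63} on the affine diagrams of Table~\ref{diagramsns}; combined with the identity of Lemma~\ref{gzero4=0} and the knowledge (from Theorem~\ref{minimalnonspherical}~ii together with Corollary~\ref{73}) that inside $G_0\aa_p$ every non-spherical nilpotent $G_0$-orbit has weighted Dynkin diagram $\be(h)=-2\langle\be,\a_p^\vee\rangle$, this will pin down $|\wt\calT^+(\a)|$ to the maximal value $r$ and so complete the proof.
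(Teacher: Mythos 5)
Your treatment of the combinatorial equivalence and of the implication $\ol\aa\subset\aa\Rightarrow G_0\aa$ non-spherical is essentially sound (the latter is immediate from Theorem \ref{minimalnonspherical} ii), since $\calA$ is a \emph{maximal} orthogonal subset of $\calC^1_\s\setminus\{\a_p\}$ and $x_\calA\in\aa$). But note a recurring slip: you write ``maximal cardinality'' where the relevant notion is maximality with respect to \emph{inclusion}. These do not coincide: for $(B_n,\a_1)$ the unique antichain in $\Ort_{\max}(\Phi_1^+)$ is the singleton $\{\grb_n\}$ while $r=2$, and in type $C$ the antichain $\calA_\Sigma$ consists largely of short roots and has cardinality well below $r$. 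Proposition \ref{lemmahermitiano}, Corollary \ref{cor:anticatena-sottosopra} and Theorem \ref{minimalnonspherical} are all stated for $\Ort_{\max}$, i.e.\ inclusion-maximal sets, so your stated goal of ``pinning down $|\wt\calT^+(\a)|$ to the maximal value $r$'' is both unattainable in general and not what is needed.

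The genuine gap is in the forward direction, and it is exactly the step you defer. Before one can even ask whether $\wt{\calT^+(\a)}$ is a maximal orthogonal subset of $\calC^1_\s\setminus\{\a_p\}$, one must know that it is \emph{contained} in $\calC^1_\s$; this is automatic only when $\a=-\ov\a_p$, because only then does the condition $\langle\grg,\a^\vee\rangle=1$ translate into the defining condition $k\d+\a_p-\hat\grg\in\Dap$ of $\calC^1_\s$ (via $\hat\a=k\d-\a_p$). For a general weight $\a$ of $\g_1(4)$ the set $\wt{\calT^+(\a)}$ has no a priori relation to $\a_p$, so the case analysis of Proposition \ref{63} (whose extra node is $-\a_p$, not $-\hat\a$) does not transfer, and invoking the weighted Dynkin diagram of the non-spherical orbit in $G_0\aa_p$ is circular, since identifying $G_0x_\calT$ with that orbit is essentially what is being proved. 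The real content of the argument is the reduction to $\a=-\ov\a_p$: one takes $\a$ maximal with respect to $\leq_0$ among the weights of $\g_1(4)$, and if $\a\neq-\ov\a_p$ one picks $\be\in\Phi_0^+$ with $\a+\be$ a weight, derives $\sum_{\grg\in\calT}\langle\be,\grg^\vee\rangle<0$ from Lemma \ref{gzero4=0}, and then either replaces $\calT$ by $s_\be(\calT)$ (when $\be\perp\calT$) to strictly increase $\a$, or extracts $\grg,\grg'\in\calT$ with $(\grg,\be)<0<(\grg',\be)$ and analyses the rank-three diagram on $\{\grg,\be,-\grg'\}$, ruling out each of the three possible shapes (the last requiring an explicit computation in type $G_2^{(1)}$ using that $\aa$ is abelian). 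None of this appears in your proposal, so the implication ``$G_0\aa$ non-spherical $\Rightarrow\calA\subset N(w_\aa)$'' remains unproved.
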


\begin{proof}
Suppose that $G_0\aa$ is non-spherical, and let $\calS \subset \Psi(\aa)$ be an orthogonal subset such that $G_0 x_\calS$ is not spherical. Then by Lemma \ref{lemma:height 4} we have $\height(x_\calS) = 4$, and by Corollary \ref{73} there is $\a\in\D_1$ such that $\g_1(4)^\a \neq 0$, namely $\a(h_\calS) = \sum_{\grg \in \calS} \langle \a, \grg^\vee \rangle = 4$. We may assume that $\a$ is maximal w.r.t. $\leq_0$ among the weights of $\g_1(4)$. As in the proof of Theorem \ref{Panyushev}, we can assume that $\calS = \calS^+(\a)$. 

Suppose that $\a = -\ov\a_p$. 
% Recall that $-\ov\a_p$ is the highest weight of $\g_1$
We have that $\hat\a=\d'-\ov\a_p=-\a_p+2\d'=k\d-\a_p$. If $\gamma\in\calS$, since $\a_p$ is long we get
$$
\langle \d'-\gamma,(k\d+\a_p)^\vee \rangle = \langle \gamma,\a^\vee \rangle = 1,
$$
hence $\d'-\gamma\in\mathcal C^1_\s$.
Thus $\mathcal O=\{\d'-\gamma\st \gamma\in\calS\}$ is an orthogonal subset of $\mathcal C^1_\s$. Notice moreover that $\mathcal O$ is a maximal orthogonal subset in $\mathcal C^1_\s$: if indeed $\eta\in\mathcal O^\perp\cap\mathcal C^1_\s$, then $\Pi_{\calS,\a} \cup\{\eta\}$ gives rise to a generalized Cartan matrix which is neither finite nor affine, contradicting Lemma \ref{affine type}. Therefore $\Upsilon(\mathcal O)$ is a maximal orthogonal set in $\bigcup_\Sigma\Phi(\Sigma)^+_1$, and by Corollary \ref{cor:anticatena-sottosopra} it follows that $\bigcup_\Sigma\mathcal A_\Sigma \subset \Upsilon(\mathcal O)^{\leq_0}$. By Lemma \ref{N(w*p)}, it follows that $\mathcal A\subset \mathcal O^{\leq_0}\subset N(\Theta^{-1}(\aa))$ which in turns means that $N(\overline w)\subset N(\Theta^{-1}(\aa))$, or, equivalently, that $\overline\aa\subset\aa$.

Suppose that $\a \neq -\ol\a_p$. Then there exists $\be\in\Dp_0$ such that $\a+\be\in\D_0 \cup \Phi_1 \cup\{0\}$, and the maximality of $\a$ among the weights of $\gog_1(4)$ implies that $\sum_{\gamma\in\calS} \langle \a+\be,\gamma^\vee \rangle <4$. Therefore by Lemma \ref{gzero4=0} we get
\begin{equation}\label{sumlesszero}
2\langle \be,\a^\vee \rangle = \sum_{\gamma\in\calS} \langle \be,\gamma^\vee \rangle <0.
\end{equation}
  In particular there is $\gamma$ such that $(\beta, \gamma) < 0$ and, by Lemma~\ref{lem:adding-roots} it follows $(\beta, \gamma') \ge 0$ for all $\gamma' \in \calS \setminus \{\gamma\}$. 

Suppose that $(\be, \gamma')=0$ for all $\gamma' \in \calS$. Then $s_\be(\calS)$ is an orthogonal subset of $\Psi(\aa)$ and $G_0 x_{s_\beta(\calS)} = G_0 x_\calS$ is still not spherical, and $s_\be(\a)$ is a maximal weight in $\Phi_1$ w.r.t. $\leq_0$ such that $s_\be(\a)(h_{s_\be(\calS)}) = \height(x_{s_\be(\calS)}) = 4$. On the other hand by (\ref{sumlesszero}) we have $\a\le_0 s_\be(\a)$, therefore we may proceed inductively by replacing $x_\calS$ with $x_{ s_\be(\calS)}$ until either $\a = -\ol\a_p$ or we find a root $\grg'\in\calS$ such that $\langle \gamma',\be^\vee \rangle >0$. 

If $\a = -\ol\a_p$ then we are done, therefore we may assume that there are $\grg,\grg' \in \calS$ such that $(\grg,\be) < 0$ and $(\grg',\be) > 0$. Consider the set $\Pi_\be=\{\gamma,\be,-\gamma'\}$, then $A_\be=(\langle \nu,\xi^\vee\rangle )_{\nu,\xi\in\Pi_\be}$ is a generalized Cartan matrix, and by Lemma \ref{affine type} it is either of affine or of finite type. Identify $\Pi_\be$ with the corresponding Dynkin diagram; since $(\beta, \gamma') \ge 0$ for all $\gamma' \in \calS \setminus \{\gamma\}$, by (\ref{sumlesszero}) we have that $\langle \be,\gamma^\vee \rangle \leq -2$, so $\Pi_\be$ is not simply laced and $\gamma$ is a short node. If moreover $\langle \be,\gamma^\vee\rangle = -2$, then again by (\ref{sumlesszero}) it follows that $\langle \be,\gamma'^\vee\rangle =1$. With these conditions at hand, we see that the only possibilities for the diagram of $\Pi_\be$ are the following:
$$
\begin{tikzpicture}[double distance=2pt,>=stealth] 
\matrix [matrix of math nodes,row sep=.5cm,column sep=.5cm]
{
|(21)|\circ&|(22)|\circ&|(23)|\circ\\
};
\node [below] at (21) {$-\gamma'$};
\node [below] at (22) {$\be$};
\node [below] at (23) {$\gamma$};
\begin{scope}[every node/.style={midway,auto},thick]
\draw [double,->] (21) -- (22);
\draw[double,->] (22)--(23);
\end{scope}
\end{tikzpicture}\quad\begin{tikzpicture}[double distance=2pt,>=stealth] 
\matrix [matrix of math nodes,row sep=.5cm,column sep=.5cm]
{
|(21)|\circ&|(22)|\circ&|(23)|\circ\\
};
\node [below] at (21) {$-\gamma'$};
\node [below] at (22) {$\be$};
\node [below] at (23) {$\gamma$};
\begin{scope}[every node/.style={midway,auto},thick]
\draw (21) -- (22);
\draw[double,->] (22)--(23);
\end{scope}
\end{tikzpicture}
\quad
\begin{tikzpicture}[double distance=2pt,>=stealth] 
\matrix [matrix of math nodes,row sep=.5cm,column sep=.5cm]
{
|(21)|\circ\,&|(22)|\circ&|(23)|\circ\\
};
\node [below] at (21) {$-\gamma'$};
\node[below] at (23) {$\gamma$};
\node [below] at (22) {$\be$};
\begin{scope}[every node/.style={midway,auto},thick]
\draw(21) -- (22);
\draw(7pt,3pt)-- +(11pt,0);
\draw(22) -- (23);
\draw(7pt,-3pt)-- +(11pt,0);
\draw(22pt,0)--(18pt,-3pt);
\draw(22pt,0)--(18pt,3pt);
\end{scope}
\end{tikzpicture}
$$

The first case has to be discarded because, otherwise, the diagram of $\Pi_{\calS,\a}$ would have rank three with two nodes $\gamma, \gamma'$ satisfying $\frac{\Vert \gamma'\Vert^2}{\Vert \gamma\Vert^2}=4$, and this never occurs for the diagrams of Table \ref{diagramsns}. The second case also has to be discarded, otherwise by Lemma \ref{gzero4=0} it would follow
$$
2 \langle \be,\a^\vee \rangle = \sum_{\gamma\in\calS} \langle \be,\gamma^\vee \rangle =-1.
$$

Therefore the diagram of $\Pi_\be$ is of type $G_2^{(1)}$. This is possible only if $\widehat L(\g,\s)$ is of type $G_2^{(1)}$. Let $\Pia=\{\a_0,\a_1,\a_2\}$ be as in \cite[Table Aff 1]{Kac} (in particular,  $\a_2$ is short). Since $\be$ is long and belongs to $\Dp_0$, we have that $\be=\a_0$.
Moreover, $-\gamma'+2\be+3\gamma$ is isotropic so $-\hat\gamma'+2\be+3\hat\gamma=\d$, hence $-\hat\gamma'+3\hat\gamma=3\a_0+2\a_1+3\a_2$. Since $\hat\gamma',\hat\gamma\in\Dap_1$, we have that $[\hat\gamma:\a_0]\leq 1$ and $[\gamma':\a_0]\leq 1$. It follows that $\hat\gamma'=\a_1+x\a_2$ and $\hat\gamma=\a_0+\a_1+y\a_2$. From $[-\hat\gamma'+3\hat\gamma:\a_2]=3$ and $(\hat\gamma',\hat\gamma)=0$ we obtain that either $\hat\gamma'=\a_1,\hat\gamma=\a_0+\a_1+\a_2$ or $\hat\gamma'=\a_1+3\a_2,\hat\gamma=\a_0+\a_1+2\a_2$. In both cases one easily verifies that $\overline{\hat\gamma}$ cannot belong to $\Psi(\aa)$ with $\aa\in\mathcal I^\s_{ab}$. Hence we have obtained the desired contradiction. We conclude that $\a = -\ov\a_p$, and the proof is complete.
\end{proof}

\vskip10pt
\footnotesize{

\noindent{\bf J.G.}: Scuola Normale Superiore, Piazza dei Cavalieri 7,
                56126 Pisa,
                Italy;
{\tt jacopo.gandini@sns.it}

\noindent{\bf P.M.F.}: Politecnico di Milano, Polo regionale di Como, 
Via Valleggio 11, 22100 Como,
Italy; {\tt pierluigi.moseneder@polimi.it}

\noindent{\bf P.P.}: Dipartimento di Matematica, Sapienza Universit\`a di Roma, P.le A. Moro 2, 00185 Roma, Italy; {\tt papi@mat.uniroma1.it}
\end{document}